\theoremstyle{plain}
\newtheorem{theorem}{Theorem}[section]
\newtheorem{lemma}[theorem]{Lemma}
\newtheorem{corollary}[theorem]{Corollary}
\theoremstyle{definition}
\newtheorem{definition}[theorem]{Definition}
\theoremstyle{remark}
\newtheorem{remark}[theorem]{Remark}
\newtheorem*{remark*}{Remark}
\numberwithin{equation}{section}
\newcommand{\dosfilas}[2]{
  \ldelim[{2}{2mm}& #1 &\rdelim]{2}{2mm} \\
  & #2 & &  & &
}
\newcommand*\pFqskip{8mu}
\newcommand*\pFq{\begingroup
        \catcode`\,\active
        \def ,{\mskip\pFqskip\relax}%
        \dopFq
}
\def\dopFq#1#2#3#4#5{%
        {}_{#1}F_{#2}\biggl(\genfrac..{0pt}{}{#3}{#4};#5\biggr)%
        \endgroup
}
\newcommand\D{{\mathcal D}}
\newcommand\F{{\mathcal F}}
\newcommand\G{{\mathcal G}}
\newcommand\HH{{\mathcal A}}
\newcommand\CC{{\mathbb C}}
\newcommand\RR{{\mathbb R}}
\newcommand\ZZ{{\mathbb Z}}
\newcommand\NN{{\mathbb N}}
\newcommand\X{{\Theta}}
\newcommand\x{{\theta}}
\newcommand\pp{\mbox{$\mathfrak{p}_{F}$}}
\newcommand\rank{\operatorname{rank}}
\newcommand\sign{\operatorname{sign}}
\newcommand\diagonal{\operatorname{diag}}
\newcommand\Sh{\mbox{\Large $\mathfrak {s}$}}
   \title{Exceptional Hahn and Jacobi orthogonal polynomials
  \footnote{Partially supported by MTM2012-36732-C03-03 (Ministerio de Economía y Competitividad),
FQM-262, FQM-4643, FQM-7276 (Junta de Andalucía) and Feder Funds (European
Union).}}
   \author{Antonio J. Dur\'{a}n\\
     \footnotesize
        \  Departamento de An\'{a}lisis Matem\'{a}tico.
       Universidad de Sevilla \\
       \footnotesize Apdo (P. O. BOX) 1160. 41080 Sevilla. Spain.
   duran@us.es \\
          \ \ }
   \date{}
\begin{document}
   \maketitle

\bigskip

\begin{abstract}
Using Casorati determinants of Hahn polynomials $(h_n^{\alpha,\beta,N})_n$, we construct for each pair $\F=(F_1,F_2)$ of finite sets of positive integers polynomials $h_n^{\alpha,\beta,N;\F}$, $n\in \sigma _\F$, which are eigenfunctions of a second order difference operator, where $\sigma _\F$ is certain set of nonnegative integers, $\sigma _\F \varsubsetneq \NN$.  When $N\in \NN$ and $\alpha$, $\beta$, $N$ and $\F$ satisfy a suitable admissibility condition, we prove that the polynomials $h_n^{\alpha,\beta,N;\F}$  are also orthogonal and complete with respect to a positive measure (exceptional Hahn polynomials). By passing to the limit, we transform the Casorati determinant of Hahn polynomials into a Wronskian type determinant of Jacobi polynomials $(P_n^{\alpha,\beta})_n$. Under suitable conditions for $\alpha$, $\beta$ and $\F$, these Wronskian type determinants turn out to be exceptional Jacobi polynomials.
\end{abstract}

\section{Introduction}
In \cite{duch} and \cite{dume}, we have introduced a systematic way of constructing exceptional discrete orthogonal polynomials using the concept of dual families of polynomials. We applied  this procedure to construct exceptional Charlier and Meixner polynomials and, passing to the limit, exceptional Hermite and Laguerre polynomials, respectively. The purpose of this paper is to extend this construction to Hahn and Jacobi exceptional polynomials.

Exceptional orthogonal polynomials $p_n$, $n\in X\varsubsetneq \NN$, are complete orthogonal polynomial systems with respect to a positive measure which in addition
are eigenfunctions of a second order differential operator. They extend the  classical families of Hermite, Laguerre and Jacobi. The last few years have seen a great deal of activity in the area  of exceptional orthogonal polynomials (see, for instance,
\cite{duch,dume,GUKM1,GUKM2} (where the adjective \textrm{exceptional} for this topic was introduced), \cite{GUKM3,G,OS0,OS4,Qu,STZ},  and the references therein).

In the same way, exceptional discrete orthogonal polynomials are complete orthogonal polynomial systems with respect to a positive measure which in addition are eigenfunctions of a second order difference operator, extending the discrete classical families of Charlier, Meixner, Krawtchouk and Hahn, or Wilson, Racah, etc., if orthogonal discrete polynomials on nonuniform lattices are considered (\cite{duch,dume,OS6,YZ}).

The most apparent difference between classical or classical discrete orthogonal polynomials and their exceptional counterparts
is that the exceptional families have gaps in their degrees, in the
sense that not all degrees are present in the sequence of polynomials (as it happens with the classical families) although they are complete in the underlying $L^2$ space. This
means that they are not covered by the hypotheses of Bochner's and Lancaster's classification theorems (see \cite{B} or \cite{La}) for classical and classical discrete orthogonal polynomials, respectively.

As mentioned above, we use the concept of dual families of polynomials to construct exceptional discrete orthogonal polynomials (see \cite{Leo}). Classical discrete orthogonal polynomials $(p_n)_n$ are eigenfunctions of two second order difference operators. One of them acts on the discrete parameter $n$, and corresponds with the three term recurrence relation they satisfy as a consequence of their orthogonality with respect to a measure. The other second order difference operator acts on the continuous variable $x$. Duality is with respect to the corresponding sequences of eigenvalues. For Charlier or Meixner polynomials, both sequences can be taken equal to $n$, and the families turns out to be selfdual. For Hahn polynomials the situation is different because one of the eigenvalue sequences is quadratic in $n$. As a consequence, when duality is applied one moves from Hahn to dual Hahn polynomials and viceversa.

\begin{definition}\label{dfp}
Given two sequences of numbers $(\varpi_n)_{n\in V}$ and $(\varpi ^*_n)_{n\in U}$, where $U,V$ are subsets of $\NN$,
we say that the two sequences of polynomials $(p_n)_{n\in U}$, $(q_n)_{n\in V}$ are dual with respect to $(\varpi_n)_{n\in V}$ and $(\varpi ^*_n)_{n\in U}$ if there exist a couple of sequences of numbers $(\xi_n)_{n\in U}, (\zeta_n)_{n\in V} $ such that
\begin{equation}\label{defdp}
\xi_up_u(\varpi _v)=\zeta_vq_v(\varpi ^*_u), \quad u\in U, v\in V.
\end{equation}
\end{definition}

Duality has shown to be a fruitful concept regarding discrete orthogonal polynomials, and his utility has been again manifest in the exceptional discrete polynomials world. Indeed, as we pointed out in \cite{duch} and \cite{dume}, it turns out that duality interchanges exceptional discrete orthogonal polynomials with the so-called Krall discrete orthogonal polynomials. A Krall discrete orthogonal family is a sequence of polynomials $(p_n)_{n\in \NN}$, $p_n$ of degree $n$, orthogonal with respect to a positive measure which, in addition, are also eigenfunctions of a higher order difference operator. A huge amount of families of Krall discrete orthogonal polynomials have been recently introduced by the author in \cite{dudh} by mean of certain Christoffel transform of the classical discrete measures of dual Hahn (see also \cite{du0,du1,DdI,DdI2}). A Christoffel transform  consists in multiplying a measure $\mu$ by a polynomial $r$.

The content of this paper is as follows. In Section 2, we include some preliminary results about Christoffel transforms and finite sets of positive integers.

In Section 3, using Casorati determinants of Hahn polynomials we associated to a pair $\F=(F_1,F_2)$ of finite sets of positive integers a sequence of polynomials which are eigenfunctions of a second order difference operator.
Denote by  $k_i$ for the number of elements of $F_i$,
$i=1,2$ and by $k=k_1+k_2$ the number of elements of $\F$. One of
the components of $\F$, but not both, can be the empty set.
We define the nonnegative integer $u_\F$  by $u_\F=\sum_{f\in F_1}f+\sum_{f\in
F_2}f-\binom{k_1+1}{2}-\binom{k_2}{2}$ and the infinite set of nonnegative integers $\sigma _\F$ by
$$
\sigma _\F=\{u_\F,u_\F+1,u_\F+2,\cdots \}\setminus \{u_\F+f,f\in F_1\}.
$$
Under mild conditions on the parameters $\alpha, \beta $ and $N$, we then associate to the pair $\F$ the sequence of polynomials $h_n^{\alpha,\beta,N;\F}$, $n\in \sigma _\F$, defined by
\begin{equation}\label{defmexi}
h_n^{\alpha, \beta, N;\F}(x)= \frac{ \left|
  \begin{array}{@{}c@{}lccc@{}c@{}}
    & h_{n-u_\F}^{\alpha ,\beta ,N}(x+j-1) &&\hspace{-.9cm}{}_{1\le j\le k+1} \\
    \dosfilas{ h_{f}^{\alpha ,\beta ,N}(x+j-1) }{f\in F_1} \\
    \dosfilas{ s_{k-j+1}^{N-k+1}(x)s_{j-1}^{N+\beta+1}(x+j-1)h_{f}^{\alpha ,-\beta ,\beta+N}(x+j-1) }{f\in F_2}
  \end{array}
  \hspace{-.4cm}\right|}{\prod_{i=1}^{k_2}(N-x-k+1)_{k_2-i}(N+\beta-x-i+2)_{i-1}}
\end{equation}
where $(h_n^{\alpha,\beta})_n$ are the Hahn polynomials (see (\ref{hpol})) and
\begin{equation}\label{defsj}
s_j^u(x)=(u-x)_{j},\quad u\in \CC, j=0,1,\cdots \end{equation}
Along this paper, we use the following notation:
given a finite set of positive integers $F=\{f_1,\ldots , f_m\}$, the expression
\begin{equation}\label{defdosf}
  \begin{array}{@{}c@{}cccc@{}c@{}}
    \dosfilas{ z_{f,1} & z_{f,2} &\cdots  & z_{f,m} }{f\in F}
  \end{array}
\end{equation}
inside of a matrix or a determinant will mean the submatrix defined by
$$
\left(
\begin{array}{cccc}
z_{f_1,1} & z_{f_1,2} &\cdots  & z_{f_1,m}\\
\vdots &\vdots &\ddots &\vdots \\
z_{f_m,1} & z_{f_m,2} &\cdots  & z_{f_m,m}
\end{array}
\right) .
$$
The determinant (\ref{defmexi}) should be understood in this form. As usual $(a)_j=a(a+1)\cdots(a+j-1)$ denotes the Pochhammer symbol.

Consider now the measure $\rho _{\alpha,\beta,N}^{\F}=\eta _{\alpha,\beta,N}^{\F}(x-u_\F)$, where
\begin{equation}\label{ctmew}
\eta_{\alpha,\beta,N}^{\F}=\prod_{f\in F_1}(\lambda (x)-\lambda(f))\prod_{f\in F_2}(\lambda (x)-\lambda (f-\beta))w_{*,\alpha,\beta,N},
\end{equation}
$\lambda(x)=\lambda^{\alpha,\beta}(x)=x(x+\alpha+\beta+1)$, and $w_{*,\alpha,\beta,N}$ is the measure which respect to which the
dual Hahn polynomials $(R_n^{\alpha,\beta,N})_n$ are orthogonal  (see (\ref{dhpol}) and (\ref{masdh})).
It turns out that the sequence of polynomials $h_n^{\alpha,\beta,N;\F}$, $n\in \sigma _\F$, and the sequence of orthogonal polynomials with respect to the measure $\rho _{\alpha,\beta,N}^{\F}$ are dual sequences (see Lemma \ref{lem3.2}). As a consequence we get that the polynomials $h_n^{\alpha,\beta,N;\F}$, $n\in \sigma _\F$, are always eigenfunctions of a second order difference operator $D_\F$ (whose coefficients are rational functions); see Theorem \ref{th3.3}.

The most interesting case appears when the measure $\rho _{\alpha,\beta,N}^{\F}$ is  either positive or negative. This gives rise to the concept of Hahn admissibility (see Definition \ref{dadh} in Section \ref{sectadm}).
In Section 4, we introduce the measure
$$
\omega_{\alpha,\beta,N}^\F=\sum_{x=0}^{N-k_1} \frac{\binom{\alpha +k+x}{x}\binom{\beta +N-k_2-x}{N-k_1-x}}{\Omega_\F^{\alpha\beta,N}(x)\Omega_\F^{\alpha\beta,N}(x+1)}\delta_x,
$$
where $\Omega _\F^{\alpha, \beta, N}$ is the polynomial defined by
\begin{equation*}\label{defmexii}
\Omega _\F^{\alpha, \beta, N}(x)= \frac{ \left|
  \begin{array}{@{}c@{}lccc@{}c@{}}
    &  &&\hspace{-.9cm}{}_{1\le j\le k} \\
    \dosfilas{ h_{f}^{\alpha ,\beta ,N}(x+j-1) }{f\in F_1} \\
    \dosfilas{ s_{k-j+1}^{N-k+1}(x)s_{j-1}^{N+\beta+1}(x+j-1)h_{f}^{\alpha ,-\beta ,\beta+N}(x+j-1) }{f\in F_2}
  \end{array}
  \hspace{-.4cm}\right|}{\prod_{i=1}^{k_2}(N-x-k+1)_{k_2-i+1}(N+\beta-x-i+2)_{i-1}}.
\end{equation*}
We prove that Hahn admissibility is equivalent to the fact that this measure is either positive or negative (Lemma \ref{l3.1}).
We also  prove that if $\alpha,\beta, N$ and $\F$ are Hahn admissible, then the polynomials $h_n^{\alpha,\beta,N;\F}$, $n\in \sigma _\F$, are orthogonal and complete with respect to this measure $\omega_{\alpha,\beta,N}^\F$ (Theorem \ref{th4.5}).

In Section 5 and 6, we construct exceptional Jacobi polynomials by taking limit (in a suitable way) in the exceptional Hahn polynomials when $N\to +\infty $. We then get (see Theorem \ref{th5.1}) that for each pair $\F=(F_1,F_2)$ of finite sets of positive integers, the polynomials
\begin{equation}\label{deflaxi}
P_n^{\alpha,\beta ;\F}(x)= \frac{ \left|
  \begin{array}{@{}c@{}lccc@{}c@{}}
    & (-1)^{j-1}(P_{n-u_\F}^{\alpha ,\beta })^{(j-1)}(x) &&\hspace{-.9cm}{}_{1\le j\le k+1} \\
    \dosfilas{ (-1)^{j-1}(P_{f}^{\alpha ,\beta })^{(j-1)}(x) }{f\in F_1} \\
    \dosfilas{ (\beta-f)_{j-1}(1+x)^{k-j+1}P_{f}^{\alpha +j-1 ,-\beta-j+1 }(x)}{f\in F_2}
  \end{array}
  \hspace{-.4cm}\right|}{(1+x)^{k_2(k_2-1)}}
\end{equation}
$n\in \sigma _\F$, are eigenfunctions of a second order differential operator.

Consider the polynomial $\Omega _F^{\alpha,\beta} $ defined by
\begin{equation}\label{deflaxii}
\Omega _\F^{\alpha,\beta}(x)= \frac{ \left|
  \begin{array}{@{}c@{}lccc@{}c@{}}
    &  &&\hspace{-.9cm}{}_{1\le j\le k} \\
    \dosfilas{ (-1)^{j-1}(P_{f}^{\alpha ,\beta })^{(j-1)}(x) }{f\in F_1} \\
    \dosfilas{ (\beta-f)_{j-1}(1+x)^{k-j}P_{f}^{\alpha +j-1 ,-\beta-j+1 }(x)}{f\in F_2}
  \end{array}
  \hspace{-.4cm}\right|}{(1+x)^{k_2(k_2-1)}}.
\end{equation}
Assuming that
\begin{align}\label{3cdm}
\hbox{$\alpha +k>-1, \beta+k_1-k_2>-1$ and $\Omega _\F^{\alpha,\beta}(x)\not=0$, $x\in [-1,1]$,}
\end{align}
we prove that the polynomials $P_n^{\alpha,\beta ;\F}$, $n\in \sigma _\F$, are orthogonal with respect to the positive weight
$$
\omega_{\alpha,\beta;\F} =\frac{(1-x)^{\alpha +k}(1+x)^{\beta+k_1-k_2}}{(\Omega_\F^{\alpha,\beta}(x))^2},\quad -1<x<1,
$$
and form a complete orthogonal system in $L^2(\omega _{\alpha,\beta;\F})$ (see Theorem \ref{th6.3}).
Passing to the limit from Hahn admissibility we get the concept of Jacobi admissibility (see Definition \ref{dadj}). We prove that the assumptions (\ref{3cdm}) imply that $\alpha, \beta$ and $\F$ are Jacobi admissible (Theorem \ref{alv}). We guess that the converse is also true, but we have only been able to prove it under an additional technical condition (Theorem \ref{convth}).

We finish pointing out that, as explained above, the approach of this paper is the same as in \cite{duch} and \cite{dume} for Charlier and Hermite and Meixner and Laguerre polynomials, respectively. Since we work here with more parameters, the computations are technically more involve. Anyway, we will omit those proofs which are too similar to the corresponding ones in \cite{duch} or \cite{dume}.

\section{Preliminaries}

Let $\mu $ be a Borel measure (positive or not) on the real line. The $n$-th moment of $\mu $ is defined by
$\int _\RR t^nd\mu (t)$. When $\mu$ has finite moments for any $n\in \NN$, we can associate it a bilinear form defined in the linear space of polynomials by
\begin{equation}\label{bf}
\langle p, q\rangle =\int pqd\mu.
\end{equation}
Given an infinite set $X$ of nonnegative integers, we say that the polynomials $p_n$, $n\in X$, are orthogonal with respect to $\mu$ if they
are orthogonal with respect to the bilinear form defined by $\mu$; that is, if they satisfy
$$
\int p_np_md\mu =0, \quad n\not = m, \quad n,m \in X.
$$
When $X=\NN$ and the degree of $p_n$ is $n$, $n\ge 0$, we get the usual definition of orthogonal polynomials with respect to a measure.
When $X=\NN$, orthogonal polynomials (with non null norm) with respect to a measure are unique up to multiplication by non null constant. Let us remark  that this property is not true when $X\not =\NN$.
Positive measures $\mu $ with finite moments of any order and infinitely many points in its support has always a sequence of orthogonal polynomials $(p_n)_{n\in\NN }$, $p_n$ of degree $n$; in this case
the orthogonal polynomials have positive norm: $\langle p_n,p_n\rangle>0$. Moreover, given a sequence of orthogonal polynomials $(p_n)_{n\in \NN}$ with respect to a measure $\mu$ (positive or not) the bilinear form (\ref{bf}) can be represented by a positive measure if and only if $\langle p_n,p_n \rangle > 0$, $n\ge 0$. In Section 4 of this paper, we deal with discrete measures supported in a finite number of mass points. The following lemma will be useful in relation with this kind of measures.

\begin{lemma}\label{ldmp} Consider a discrete measure $\mu =\sum_{i=0}^N\mu_i\delta_{x_i}$, with $\mu_i\not =0$, $i=0,\cdots , N$.
\begin{enumerate}
\item If we assume that there exists a sequence $p_i$, $i=0,\cdots, N$, of orthogonal polynomials, with $\deg (p_i)=i$ and such that $\langle p_i,p_i\rangle\not =0$ has constant sign, then either $\mu_i>0$ or $\mu_i<0$, $i=0,\cdots, N$.
\item If we assume that there exists a sequence $(f_i)_{i=0}^{N+1}$ of orthogonal functions with non-null $L^2$ norm, then these functions form a basis of $L^2(\mu)$.
\end{enumerate}
\end{lemma}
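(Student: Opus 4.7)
The plan for (1) is to extract each weight $\mu_j$ via a Lagrange-interpolation trick and rewrite it as a manifestly-signed sum of squares. Since the $N+1$ mass points $x_0,\ldots ,x_N$ are distinct, evaluation at these points identifies the space of polynomials of degree at most $N$ with $L^{2}(\mu)$, which is therefore $(N+1)$-dimensional, and the orthogonal polynomials $(p_k)_{k=0}^{N}$ form an $\RR$-basis of it because their degrees are $0,1,\ldots ,N$. For each $j\in\{0,\ldots ,N\}$ I would introduce the Lagrange polynomial
\[
\ell_j(x)=\prod_{i\neq j}\frac{x-x_i}{x_j-x_i},
\]
of exact degree $N$, with $\ell_j(x_i)=\delta_{ij}$, and expand it in the orthogonal basis as $\ell_j=\sum_{k=0}^{N}c_{jk}p_k$, with real coefficients and $c_{jN}\neq 0$ (since $\ell_j$ has exact degree $N$ and $p_N$ is the unique basis element of that degree).

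Computing $\int \ell_j^{\,2}\,d\mu$ in two ways, one using $\ell_j(x_i)=\delta_{ij}$ and the other using the orthogonal expansion, yields
\[
\mu_j \;=\; \int \ell_j^{\,2}\,d\mu \;=\; \sum_{k=0}^{N} c_{jk}^{\,2}\,\langle p_k,p_k\rangle .
\]
The hypothesis that all $\langle p_k,p_k\rangle$ are nonzero with a common sign, combined with $c_{jk}^{\,2}\ge 0$ and $c_{jN}^{\,2}>0$, forces the right-hand side (and hence every $\mu_j$) to be nonzero and of that same sign, giving (1).

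For (2) the plan is a short linear-algebra argument. A family of pairwise orthogonal elements of $L^{2}(\mu)$ with non-null norms is automatically linearly independent: pair $\sum_i a_if_i=0$ with $f_j$ to obtain $a_j\langle f_j,f_j\rangle =0$, hence $a_j=0$. Since $L^{2}(\mu)$ is $(N+1)$-dimensional by the evaluation identification used in (1), any orthogonal family of $N+1$ elements of nonzero norm automatically spans it, and so is a basis. I do not anticipate any real obstacle: the only non-obvious step is the identity $\int\ell_j^{\,2}\,d\mu=\mu_j$ in (1), which is immediate from $\ell_j(x_i)=\delta_{ij}$ and is what opens the door to the sum-of-squares reformulation; part (2) is then standard finite-dimensional linear algebra.
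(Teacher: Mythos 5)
Your proposal is correct and follows essentially the same route as the paper: part (1) is the paper's argument with the test polynomial $\prod_{j\neq i}(x-x_j)$ merely normalized into the Lagrange basis polynomial $\ell_j$ before expanding it in the $p_k$'s, and part (2) is the standard linear-independence-of-orthogonal-families argument that the paper phrases equivalently via the nonvanishing of $\det\bigl((f_i(x_j))\bigr)$. No gaps; your observation that the pairing argument in (2) needs only $\langle f_j,f_j\rangle\neq 0$ (not positivity) is exactly what makes the lemma work for signed measures.
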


\begin{proof}
(1) Assume that $\langle p_i,p_i\rangle >0$, $i=0,\cdots , N$.
For a polynomial $q\not =0$ with $\deg(q)\le N$, we have $q=\sum_{j=0}^Na_jp_j$, and at least one $a_j$ is non zero. This gives
$\langle q,q\rangle =\sum_{j=0}^Na_j^2\langle p_j,p_j\rangle >0$. Consider now the polynomial of degree $N$, $q_i(x)=\prod_{j=0,j\not =i}^N(x-x_j)$. Then $0<\langle q_i,q_i\rangle =\mu_i \prod_{j=0,j\not =i}^N(x_i-x_j)^2$. So, $\mu_i>0$.

(2) Write $A$ and $D$ for the $(N+1)\times (N+1)$ matrices defined by $A=(f_i(x_j))_{0\le i\le N;0\le j\le N}$ and $D=\diagonal(\mu_0,\cdots, \mu_N)$.
Since the functions $(f_i)_{i=0}^{N+1}$, are orthogonal, we have
$$
ADA^t=\diagonal (\langle f_i,f_i\rangle, i=0,\cdots , N).
$$
In particular $\det(ADA^t)=\prod_{i=0}^{N+1}\langle f_i,f_i\rangle\not =0$. That is, $\det (A)\not =0$. Now it is easy to conclude.
\end{proof}

When $X=\NN$, Favard's Theorem establishes that a sequence $(p_n)_{n\in \NN}$ of polynomials, $p_n$ of degree $n$, is orthogonal (with non null norm) with respect to a measure if and only if it satisfies
a three term recurrence relation of the form
$$
xp_n(x)=a_np_{n+1}(x)+b_np_n(x)+c_np_{n-1}(x), \quad n\ge 0,\quad p_{-1}=0,
$$
where $a_n, b_n$ and $c_n$, $n\in \NN$, are real numbers with $a_{n-1}c_n\not =0$, $n\ge 1$. If, in addition, $a_{n-1}c_n>0$, $n\ge 1$,
then the polynomials $(p_n)_{n\in \NN}$ are orthogonal with respect to a positive measure with infinitely many points in its support, and conversely.
Again, Favard's Theorem is not true for a sequence of orthogonal polynomials $(p_n)_{n\in X}$ when $X\not =\NN$.

To compute the degree of the exceptional polynomials introduced in this paper, we will need the following lemma.

\begin{lemma}\label{lgp1} For a pair $U,V$ of finite sets of (different) positive integers with $k_1$ and $k_2$ the number of elements of $U$ and $V$, respectively, let $R_1, R_2, \ldots, R_k,$ be nonzero polynomials satisfying that $U=\{\deg R_i,i=1,\cdots,  k_1\}$ and $V=\{\deg R_{k_1+i},i=1,\cdots,  k_2\}$. Write $r_i$ for the leading coefficient of $R_i$, $1\le i\le k=k_1+k_2$.
For real numbers $N, \beta$, consider the rational function $P$ defined by
\begin{equation}\label{defdet}
P(x)=\frac{\left|
  \begin{array}{@{}c@{}lccc@{}c@{}}
    &&&\hspace{-.9cm}{}_{1\le j\le k} \\
    \dosfilas{ R_{u}(x+j-1) }{u\in U} \\
    \dosfilas{ s_{k-j+1}^{N-k+1}(x)s_{j-1}^{N+\beta+1}(x+j-1)R_{v}(x+j-1) }{v\in V}
  \end{array}
  \hspace{-.4cm}\right|}{\prod_{i=1}^{k_2}s^{N-k+1}_{k_2-i+1}(x)s^{N+\beta-i+2}_{i-1}(x)}.
\end{equation}
Then, if $\beta+u-v\not =0$, $u\in U,v\in V$, $P$ is a polynomial of degree $\sum_{u\in U,v\in V}f-\binom{k_1}{2}-\binom{k_2}{2}$, with leading coefficient given by
\begin{equation}\label{mspcl}
p=V_{U}V_{V}\prod_{i=1}^kr_i\prod_{u\in U,v\in V}(\beta +u-v),
\end{equation}
where by $V_F$ we denote the Vandermonde determinant of $F=\{f_1,\cdots, f_k\}$
\begin{equation}\label{defvdm}
V_F=\prod_{1=i<j=k}(f_j-f_i).
\end{equation}
\bigskip
\end{lemma}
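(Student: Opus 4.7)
The plan is to write $P(x)=Q(x)/D(x)$ with $Q$ the $k\times k$ numerator determinant of (\ref{defdet}) and $D(x)=\prod_{i=1}^{k_2} s_{k_2-i+1}^{N-k+1}(x)\,s_{i-1}^{N+\beta-i+2}(x)$, and to establish three facts in sequence: (i) $D\mid Q$ as polynomials in $x$ (so $P$ is a polynomial); (ii) $\deg Q=\sum_{u\in U}u+\sum_{v\in V}v+k\,k_2-\binom{k}{2}$; (iii) the leading coefficient of $Q$ equals $(-1)^{k_2^2}\prod_{i=1}^k r_i\cdot V_U\,V_V\prod_{u,v}(\beta+u-v)$. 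A direct count shows $\deg D=k_2^2$ with leading coefficient $(-1)^{k_2^2}$, so division yields the stated degree and leading coefficient of $P$.

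For (i), I factor $D(x)=\prod_{l=1}^{k_2}(N-k+l-x)^{k_2-l+1}\prod_{l=0}^{k_2-2}(N+\beta-l-x)^{k_2-l-1}$ and check vanishing at each root. At $x=N-k+l$, the factor $(N-k+l-x)$ occurs in $s_{k-j+1}^{N-k+1}(x)$ precisely when $j\le k-l+1$, so the rows indexed by $V$ have support only in the $l-1$ columns $j=k-l+2,\ldots,k$; any Leibniz term must then pair at least $k_2-l+1$ of these rows with vanishing columns, giving vanishing of $Q$ of order at least $k_2-l+1$, which matches the multiplicity in $D$. A parallel analysis using $s_{j-1}^{N+\beta+1}(x+j-1)$ handles the roots $x=N+\beta-l$.

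For (ii)--(iii), I expand each matrix entry as a polynomial in $x$ with $j$-dependent coefficients. For type-1 rows, $R_{u_i}(x+j-1)=\sum_{m\ge 0}c_{i,m}(j)\,x^{u_i-m}$ with $\deg_j c_{i,m}\le m$; for type-2 rows, writing $T(j)=(-1)^k\prod_{s\in S_j}(x-s)$ with $S_j=\{N-k+1,\ldots,N-j+1\}\cup\{N+\beta-j+2,\ldots,N+\beta\}$, one gets $T(j)R_{v_i}(x+j-1)=\sum_{m\ge 0}c_{i,m}(j)\,x^{k+v_i-m}$, again with $\deg_j c_{i,m}\le m$. Multilinearity in rows yields
\[
Q(x)=\sum_{(m_1,\ldots,m_k)}x^{\sum_i d_i-\sum_i m_i}\det\bigl[c_{i,m_i}(j)\bigr]_{1\le i,j\le k},\qquad d_i\in\{u_i,\,k+v_{i-k_1}\}.
\]
Sorting the $m_i$ in increasing order as $m_{[1]}\le\cdots\le m_{[k]}$, the rows of the sub-matrix jointly lie in the space of polynomials in $j$ of degree $\le\max_i m_i$; for them to be linearly independent at the $k$ evaluation points $j=1,\ldots,k$ one needs $m_{[i]}\ge i-1$ for every $i$, hence $\sum m_i\ge\binom{k}{2}$, with equality iff $\{m_i\}=\{0,1,\ldots,k-1\}$. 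This proves (ii).

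For (iii), the leading coefficient of $Q$ is $\sum_{\pi\in S_k}\det[c_{i,\pi(i)-1}(j)]$, and one identifies this sum as the leading behaviour of a Casorati-type determinant with effective indices $u_i$ for type-1 rows and $v_i-\beta$ for type-2 rows; it evaluates to $(-1)^{k_2^2}\prod r_i\cdot V_{U\cup(V-\beta)}$, and the Vandermonde factorisation $V_{U\cup(V-\beta)}=V_U\,V_V\prod_{u,v}(\beta+u-v)$ gives the stated formula, with signs arranging to cancel the $(-1)^{k_2^2}$ of $D$. The main obstacle is justifying the shift $v\mapsto v-\beta$ in the effective degree for type-2 rows. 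It comes from the first subleading $x$-term of $T(j)$: a short computation gives $T(j)=(-1)^k\bigl[x^k-(A+\beta j)x^{k-1}+O(x^{k-2})\bigr]$ with $A$ independent of $j$, so the coefficient of $x^{k+v_i-1}$ in the $(k_1+i)$-th row reads $r_{v_i}(v_i-\beta)\,j+(\text{constant in }j)$; this linear-in-$j$ slope $r_{v_i}(v_i-\beta)$ is exactly what makes the type-2 row behave, to the accuracy relevant for the leading coefficient, like a type-1 row of effective degree $v_i-\beta$. The hypothesis $\beta+u-v\ne 0$ ensures the resulting Vandermonde is nonzero, and a sanity check in the case $k_1=k_2=1$ reproduces the value $r_u r_v(\beta+u-v)$ by direct $2\times 2$ computation and pins down the sign.
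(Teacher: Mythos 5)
The paper gives no argument of its own here (it simply defers to Lemma 3.3 of \cite{dudh}), so a self-contained proof is a genuinely different route, and the skeleton of yours is the right one: divisibility of the numerator $Q$ by $D$ via counting, at each root of $D$, how many columns of the type-2 rows are forced to vanish in every Leibniz term; then the degree and leading coefficient via the filtration by the $j$-degree of the coefficient functions $c_{i,m}(j)$, with the combinatorial core $m_{[i]}\ge i-1$, hence $\sum_i m_i\ge\binom{k}{2}$ with equality only for permutations of $(0,1,\dots,k-1)$. The bookkeeping also checks out: $\deg D=k_2^2$ with leading coefficient $(-1)^{k_2^2}$, $kk_2-\binom{k}{2}-k_2^2=-\binom{k_1}{2}-\binom{k_2}{2}$, and the signs $(-1)^{kk_2}$ from the $k_2$ prefactors and $(-1)^{k_1k_2}$ from reordering the Vandermonde combine with $(-1)^{k_2^2}$ to give exactly (\ref{mspcl}).

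The gap is in the two assertions on which both (ii) and (iii) rest and which you verify only for $m\le 1$: that $\deg_j c_{i,m}\le m$ for the type-2 rows, and that the coefficient of $j^m$ in $c_{i,m}(j)$ equals $(-1)^k r_v\binom{v-\beta}{m}$ --- the latter being what ``effective degree $v-\beta$'' must mean for the identity $\det[\binom{\delta_i}{m}]_{i;\,0\le m\le k-1}=V_{\{\delta_i\}}/\prod_{m=0}^{k-1}m!$ to apply. Neither is automatic for $m\ge 2$. Writing $T(j)=(-1)^k\prod_{s\in S_j}(x-s)$, the coefficient of $x^{k-m}$ is $(-1)^{k+m}e_m(S_j)$, and $e_m$ of either half of $S_j$ taken separately has degree $2m$ in $j$ (already $e_1(\{N-k+1,\dots,N-j+1\})$ contains a $\binom{j-1}{2}$ term); the bound $\deg_j\le m$ holds only because the two halves cancel, which your computation checks only at $m=1$. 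Likewise the top coefficient $[j^m]\bigl((-1)^m e_m(W_j)\bigr)=\binom{v-\beta}{m}$, where $W_j$ is $S_j$ together with the shifted roots of $R_v$, is a genuine identity that the $m=1$ slope $r_v(v-\beta)$ does not imply; it can be proved, e.g., from the generating function $\prod_{w\in W_j}(1-wy)=\Phi(y)\prod_{t=0}^{j-2}\frac{1-(N+\beta-t)y}{1-(N-t)y}\prod_t\bigl(1-(\rho_t-j+1)y\bigr)$ by setting $y=z/j$ and letting $j\to\infty$, which yields $(1+z)^{v-\beta}$. As written, your argument is complete only for $k\le 2$; the conclusion and strategy are correct, but this computation must be supplied for general $k$.
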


\begin{proof}
The Lemma can be proved  as  Lemma 3.3 in \cite{dudh}.
\end{proof}

We will also need the following straightforward lemma.

\bigskip
\begin{lemma}\label{rmc}
Let $M$ be a $(s+1)\times m$ matrix with $m\ge s+1$. Write $c_i$, $i=1,\ldots , m$, for the columns of $M$ (from left to right). Assume that for $0\le j\le m-s-1$ the consecutive columns $c_{j+i}$, $i=1,\cdots ,s$, of $M$ are linearly independent while the consecutive columns $c_{j+i}$, $i=1,\cdots ,s+1$, are linearly dependent. Then $\rank M=s$.
\end{lemma}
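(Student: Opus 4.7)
The plan is to show the two inequalities $\rank M\ge s$ and $\rank M\le s$ separately, both derived directly from the hypothesis applied to suitable values of $j$.

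For the lower bound, I would take $j=0$. The hypothesis then says that $c_1,\ldots,c_s$ are linearly independent, so $\rank M\ge s$ at once. (Since $M$ has only $s+1$ rows, this already shows the rank is either $s$ or $s+1$.)

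For the upper bound, the key idea is to prove by induction on $i$ that every column $c_{s+1},c_{s+2},\ldots,c_m$ lies in $W:=\operatorname{span}(c_1,\ldots,c_s)$. The base case $i=s+1$ uses $j=0$: the columns $c_1,\ldots,c_{s+1}$ are linearly dependent while $c_1,\ldots,c_s$ are linearly independent, so any nontrivial dependence must involve $c_{s+1}$ with nonzero coefficient, giving $c_{s+1}\in W$. For the inductive step, suppose $c_{s+1},\ldots,c_{i-1}\in W$ for some $i$ with $s+2\le i\le m$, and set $j=i-s-1$ (so $0\le j\le m-s-1$). By hypothesis $c_{j+1},\ldots,c_{j+s}=c_{i-s},\ldots,c_{i-1}$ are linearly independent while $c_{i-s},\ldots,c_i$ are linearly dependent; the same argument as in the base case shows $c_i$ belongs to $\operatorname{span}(c_{i-s},\ldots,c_{i-1})$. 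Since each of $c_{i-s},\ldots,c_{i-1}$ already lies in $W$ (either trivially, if its index is $\le s$, or by the inductive hypothesis), we conclude $c_i\in W$. Hence every column of $M$ lies in $W$, which has dimension $s$, so $\rank M\le s$.

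There is really no obstacle here; the only thing to be careful about is keeping track of the index shifts when invoking the hypothesis with the appropriate $j$, and verifying that this $j$ satisfies $0\le j\le m-s-1$ throughout the induction. Combining the two bounds gives $\rank M=s$.
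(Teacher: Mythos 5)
Your proof is correct: the lower bound from $j=0$ and the sliding-window induction showing each $c_i$ lies in $\operatorname{span}(c_{i-s},\ldots,c_{i-1})\subseteq\operatorname{span}(c_1,\ldots,c_s)$ are exactly the intended argument. The paper states this lemma as ``straightforward'' and omits the proof entirely, so there is nothing further to compare; your write-up fills that gap correctly.
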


\subsection{Christoffel transform}\label{secChr}
Let $\mu$ be a measure (positive or not) and assume that $\mu$ has a sequence of orthogonal polynomials
$(p_n)_{n\in \NN}$, $p_n$ with degree $n$ and $\langle p_n,p_n\rangle \not =0$ (as we mentioned above, that always happens if $\mu$ is positive, with finite moments and infinitely many points in its support).

Given a finite set $F$ of real numbers, $F=\{f_1,\cdots , f_k\}$, $f_i<f_{i+1}$, we write $\Phi_n$, $n\ge 0$, for the $k\times k$ determinant
\begin{equation}\label{defph}
\Phi_n=\vert p_{n+j-1}(f_i)\vert _{i,j=1,\cdots , k}.
\end{equation}
Notice that $\Phi_n$, $n\ge 0$, depends on both, the finite set $F$ and the measure $\mu$. In order to stress this dependence, we sometimes write  $\Phi_n^{\mu, F}$ for $\Phi_n$.

Along this Section we assume that the set $\X_\mu^F=\{ n\in \NN :\Phi_n^{\mu,F}=0\}$ is finite. We denote $\x_\mu ^F=\max \X_\mu ^F$. If
$\X_\mu ^F=\emptyset$ we take $\x_\mu ^F=-1$.

The Christoffel transform of $\mu$ associated to the annihilator polynomial $\pp$ of $F$,
$
\pp (x)=(x-f_1)\cdots (x-f_k),
$
is the measure defined by $ \mu_F =\pp \mu$.

Orthogonal polynomials with respect to $\mu_F$ can be constructed by means of the formula
\begin{equation}\label{mata00}
q_n(x)=\frac{1}{\pp (x)}\det \begin{pmatrix}p_n(x)&p_{n+1}(x)&\cdots &p_{n+k}(x)\\
p_n(f_1)&p_{n+1}(f_1)&\cdots &p_{n+k}(f_1)\\
\vdots&\vdots&\ddots &\vdots\\
p_n(f_k)&p_{n+1}(f_k)&\cdots &p_{n+k}(f_k) \end{pmatrix}.
\end{equation}
Notice that the degree of $q_n$ is equal to $n$ if and only if $n\not\in \X_\mu ^F$. In that case the leading coefficient $\lambda^Q_n$ of $q_n$ is
equal to $(-1)^k\lambda^P_{n+k}\Phi_n$, where $\lambda ^P_n$ denotes the leading coefficient of $p_n$.
The next Lemma follows easily using \cite{Sz}, Th. 2.5.

\begin{lemma}\label{sze} Let $K$ be a positive integer or infinity.
The measure $\mu_F$ has a sequence $(q_n)_{n=0}^K $, $q_n$ of degree $n$, of orthogonal polynomials if and only if $\X_\mu ^F=\emptyset$ or $\min \X_\mu>K$.
In that case, an orthogonal polynomial of degree $n$ with respect to $\mu _F$ is given by (\ref{mata00}) and also $\langle q_n,q_n\rangle _{\mu _F}\not =0$, $0\le n\le K$. If $\X_\mu \not =\emptyset$, the polynomial $q_n$ (\ref{mata00}) has still degree $n$ for $n\not \in \X_\mu^F$, and satisfies $\langle q_n,r\rangle_{\mu _F}=0$ for all polynomial $r$ with degree less than $n$ and $\langle q_n,q_n\rangle _{\mu _F}\not =0$.
\end{lemma}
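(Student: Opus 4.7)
The proof plan rests on verifying three algebraic properties of the polynomial $q_n$ defined by the Christoffel determinant formula (\ref{mata00}), and then bootstrapping from these to the two directions of the equivalence.

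First I would check that the right-hand side of (\ref{mata00}) really is a polynomial. Substituting $x=f_i$ in the $(k+1)\times(k+1)$ determinant makes the top row coincide with the $(i+1)$-th row, so the determinant vanishes at each zero of $\pp$. Hence $\pp(x)$ divides the determinant and $q_n$ is a polynomial of degree at most $n$. Computing the coefficient of $x^{n+k}$ in the numerator by Laplace expansion along the top row (only the last entry contributes $x^{n+k}$), one reads off the leading coefficient $\lambda^Q_n=(-1)^k\lambda^P_{n+k}\Phi_n^{\mu,F}$. Thus $\deg q_n=n$ precisely when $n\notin\X_\mu^F$.

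Next I would verify the orthogonality assertion directly. For any polynomial $r$ with $\deg r<n$, expanding the determinant in (\ref{mata00}) along its first row gives
\begin{equation*}
\pp(x)q_n(x)=\sum_{j=0}^{k}(-1)^{j}p_{n+j}(x)\,M_j,
\end{equation*}
where each minor $M_j$ is a constant (independent of $x$) obtained from the rows indexed by the $f_i$. Therefore
\begin{equation*}
\langle q_n,r\rangle_{\mu_F}=\int q_n(x)r(x)\pp(x)\,d\mu=\sum_{j=0}^{k}(-1)^{j}M_j\int p_{n+j}(x)r(x)\,d\mu =0,
\end{equation*}
because each integral vanishes by the orthogonality of $(p_m)_{m\in\NN}$ with respect to $\mu$ (note $n+j\ge n>\deg r$). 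The same expansion applied to $r(x)=x^n$ kills all terms with $j>0$ and leaves $M_0=\Phi_{n+1}^{\mu,F}$, yielding
$\langle q_n,x^n\rangle_{\mu_F}=\Phi_{n+1}^{\mu,F}\langle p_n,p_n\rangle_\mu/\lambda^P_n$, and consequently
\begin{equation*}
\langle q_n,q_n\rangle_{\mu_F}=\lambda^Q_n\langle q_n,x^n\rangle_{\mu_F}=(-1)^{k}\frac{\lambda^P_{n+k}}{\lambda^P_n}\,\Phi_n^{\mu,F}\Phi_{n+1}^{\mu,F}\langle p_n,p_n\rangle_\mu.
\end{equation*}
This formula already delivers all the second-part conclusions: if $n\notin\X_\mu^F$ then $q_n$ has degree $n$ and is orthogonal to every polynomial of lower degree, and nonvanishing of the norm follows from $\Phi_n^{\mu,F},\Phi_{n+1}^{\mu,F}\ne 0$.

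Finally I would close the main equivalence. The \emph{if} direction is immediate from the previous step: assuming $\X_\mu^F=\emptyset$ or $\min\X_\mu^F>K$ forces all $\Phi_n^{\mu,F}$ occurring for $0\le n\le K$ to be nonzero, and the $(q_n)_{n=0}^K$ produced by (\ref{mata00}) form the desired orthogonal sequence with nonzero norms. For the \emph{only if} direction, suppose a sequence $(\tilde q_n)_{n=0}^K$ of orthogonal polynomials with $\deg\tilde q_n=n$ and $\langle\tilde q_n,\tilde q_n\rangle_{\mu_F}\ne 0$ exists. A standard argument shows that under these hypotheses, an orthogonal polynomial of degree $n$ is uniquely determined up to a nonzero scalar: any two such must differ by a polynomial of degree $<n$ that is also orthogonal to all polynomials of lower degree, hence lies in the radical and has zero norm. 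Since the polynomial in (\ref{mata00}) also satisfies $\langle q_n,r\rangle_{\mu_F}=0$ for $\deg r<n$, it must be a scalar multiple of $\tilde q_n$, forcing the leading coefficient $(-1)^k\lambda^P_{n+k}\Phi_n^{\mu,F}$ to be nonzero; that is, $n\notin\X_\mu^F$ for every $0\le n\le K$, i.e.\ $\min\X_\mu^F>K$. The main obstacle in this plan is nothing more than bookkeeping of signs and indices in the cofactor expansion; the core content is simply that two Laplace expansions of the same $(k+1)\times(k+1)$ determinant yield both the leading coefficient and the norm in terms of the Casoratians $\Phi_n^{\mu,F}$ and $\Phi_{n+1}^{\mu,F}$.
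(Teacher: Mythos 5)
Your computational core is correct and is essentially the classical proof of Christoffel's theorem; note that the paper itself offers no argument here beyond the citation ``follows easily using \cite{Sz}, Th.~2.5'', so your direct verification is more detailed than what the paper provides. The three Laplace expansions of the determinant in (\ref{mata00}) --- divisibility of the numerator by $\pp$, the leading coefficient $(-1)^k\lambda^P_{n+k}\Phi_n^{\mu,F}$, the orthogonality $\langle q_n,r\rangle_{\mu_F}=0$ for $\deg r<n$, and the norm identity --- are all right, and the last one reproduces exactly formula (\ref{n2q}) stated in Section \ref{secChr}.

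The genuine gap is in how you close the \emph{only if} direction. First, the left-hand side of the equivalence only asserts the existence of mutually orthogonal polynomials $\tilde q_0,\dots,\tilde q_K$ of degrees $0,\dots,K$; it does not include $\langle\tilde q_n,\tilde q_n\rangle_{\mu_F}\neq 0$, which you quietly add, and this distinction matters in this paper since $\mu_F$ is in general a signed measure (the paper is careful elsewhere to write ``with non null norm'' when it is assumed). Without that extra hypothesis the uniqueness-up-to-scalar argument fails: two degree-$n$ polynomials orthogonal to all of $\mathcal{P}_{n-1}$ differ by an element of the radical of the bilinear form restricted to $\mathcal{P}_{n-1}$, and that radical is trivial only when the lower-degree norms are nonzero. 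Second, even granting nonzero norms, from $q_n=c\,\tilde q_n$ you cannot directly conclude $c\neq 0$: if $\Phi_n^{\mu,F}=0$ the determinant could in principle vanish identically, so no contradiction is reached at level $n$ by looking at leading coefficients. A clean way to finish is to set $n_0=\min \X_\mu^F\le K$ and work at level $n_0-1$: there $\Phi_{n_0-1}^{\mu,F}\neq0$, so $q_{n_0-1}$ has exact degree $n_0-1$ and equals a \emph{nonzero} multiple of $\tilde q_{n_0-1}$ by your uniqueness argument, while (\ref{n2q}) gives $\langle q_{n_0-1},q_{n_0-1}\rangle_{\mu_F}\propto\Phi_{n_0-1}^{\mu,F}\Phi_{n_0}^{\mu,F}=0$, contradicting $\langle\tilde q_{n_0-1},\tilde q_{n_0-1}\rangle_{\mu_F}\neq0$. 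Finally, for finite $K$ your assertion that all norms $\langle q_n,q_n\rangle_{\mu_F}$, $0\le n\le K$, are nonzero requires $\Phi_{K+1}^{\mu,F}\neq0$, which $\min\X_\mu^F>K$ does not supply; this boundary issue is already latent in the statement of the lemma, but your phrase ``all $\Phi_n^{\mu,F}$ occurring for $0\le n\le K$'' hides it rather than resolving it.
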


From (\ref{mata00}), one can also deduce (see Lemma 2.8 of \cite{duch})
\begin{equation}\label{n2q}
\langle q_n,q_n\rangle _{\mu_F}=(-1)^k\frac{\lambda^P_{n+k}}{\lambda^P_{n}}\Phi_n\Phi_{n+1}\langle p_n,p_n\rangle _{\mu},\quad n> \x_\mu^F+1.
\end{equation}
This identity holds for $n\ge 0$ when $\X_\mu =\emptyset$

\subsection{Finite sets and pair of finite sets of positive integers.}\label{sfspi}
Consider the set $\Xi$  formed by all finite sets of positive
integers:
\begin{align*}
\Xi=\{F:\mbox{$F$ is a finite set of positive integers}\} .
\end{align*}
We consider the involution $I$ in $\Xi$ defined by
\begin{align}\label{dinv}
I(F)=\{1,2,\cdots, \max F\}\setminus \{\max F-f,f\in F\}.
\end{align}
The definition of $I$ implies that $I^2=Id$.

The set $I(F)$ will be denoted by $G$: $G=I(F)$. Notice that
$$
\max F=\max G,\quad m=\max F-k+1,
$$
where $k$ and $m$ are the number of elements of $F$ and $G$,
respectively (for more details see \cite{duch}, Section 2.3).

For a finite set $F=\{f_1,\cdots ,f_k\}$, $f_i<f_{i+1}$, of
positive integers, we define
\begin{align}\label{defs0}
s_F&=\begin{cases} 1,& \mbox{if $F=\emptyset$},\\
k+1,&\mbox{if $F=\{1,2,\cdots , k\}$},\\
\min \{s\ge 1:s<f_s\}, & \mbox{if $F\not =\{1,2,\cdots k\}$},
\end{cases}\\\label{defffd}
F_{\Downarrow}&=\begin{cases} \emptyset,& \mbox{if $F=\emptyset$ or $F=\{1,2,\cdots , k\}$,}\\
\{f_{s_F}-s_F,\cdots , f_k-s_F\},& \mbox{if $F\not =\{1,2,\cdots ,
k\}$}.
\end{cases}
\end{align}

From now on, $\F=(F_1,F_2)$ will denote a pair of finite sets of
positive integers. We will write $F_1=\{ f_1^{ 1\rceil },\cdots ,
f_{k_1}^{1\rceil}\}$, $F_2=\{ f_1^{2\rceil},\cdots , f_{k_2}^{2\rceil}\}$, with
$f_i^{j\rceil}<f_{i+1}^{j\rceil}$ (the use of $f_i^2$ to describe elements of $F_2$ is confusing because it looks like a square, this is the reason
why we use the notation $f_i^{2\rceil}$).
 Hence $k_j$ is the number of elements of $F_j$,
$j=1,2$, and $k=k_1+k_2$ is the number of elements of $\F$. One of
the components of $\F$, but not both, can be the empty set.

We associate to $\F$ the nonnegative integers $u_\F$ and $w_\F$ and the infinite set of nonnegative integers $\sigma_\F$ defined by
\begin{align}\label{defuf}
u_\F&=\sum_{f\in F_1}f+\sum_{f\in
F_2}f-\binom{k_1+1}{2}-\binom{k_2}{2},\\\label{defwf2}
w_\F&=\sum_{f\in F_1}f+\sum_{f\in
F_2}f-\binom{k_1}{2}-\binom{k_2}{2}+1,\\\label{defsf}
\sigma _\F&=\{u_\F,u_\F+1,u_\F+2,\cdots \}\setminus \{u_\F+f,f\in
F_1\}.
\end{align}
The infinite set $\sigma_\F$ will be the set of indices for the exceptional Hahn or Jacobi polynomials associated to $\F$.
Notice that $w_\F=u_\F+k_1+1$.

For a pair $\F=(F_1,F_2)$ of positive integers we denote by
$\F_{j,\{ i\}}$, $i=1,\ldots , k_j$, $j=1,2$, and
$\F_{\Downarrow}$ the pair of finite sets of positive integers
defined by
\begin{align}\label{deff1}
\F_{1,\{ i\} }&=(F_1\setminus \{f_i^{ 1\rceil }\},F_2),\\\label{deff2}
\F_{2,\{ i\} }&=(F_1,F_2\setminus \{f_i^{ 2\rceil }\}),\\\label{deffd}
\F_{\Downarrow}&=((F_1)_{\Downarrow},F_2),
\end{align}
where $(F_1)_{\Downarrow}$ is defined by (\ref{defffd}). We also define
\begin{equation}\label{defs0f}
s_\F=s_{F_1}
\end{equation}
where the number $s_{F_1}$ is defined by (\ref{defs0}).

\subsection{Admissibility}\label{sectadm}
Using the determinant (\ref{defmexi}), whose entries are Hahn polynomials, $h_n^{\alpha,\beta, N}$, we will associate to each pair $\F$ of finite sets of positive integers a sequence of polynomials which are  eigenfunctions of a second order difference operator.
The more important of these examples are those which, in addition, are orthogonal and complete with respect to a positive measure. As it happens with the Hahn family, for the existence of such a positive measure the parameter $N$ must be taken to be a positive integer: we make that assumption along this section.

The key concept for the construction of exceptional Hahn and Jacobi polynomials is that of admissibility.
The analogy with the cases of exceptional Charlier and Hermite, and Meixner and Laguerre polynomials suggests that
the admissibility condition in the Hahn and Jacobi case should be equivalent to either the positivity or the negativity of the measure $\rho_{\alpha,\beta,N}^\F$ (\ref{ctmew}).

To avoid division by zero or trivial situations, along this section we will assume
\begin{equation}\label{cpara}
\alpha,\beta, \alpha +\beta\not=-1,-2,\cdots ,-N,\quad \{0,1,\cdots ,N\}\setminus (F_1\cup (-\beta+F_2))\not =\emptyset.
\end{equation}

\begin{definition}\label{dadh} Let $\F=(F_1,F_2)$ be  a pair of finite sets of positive integers. For a positive integer $N$,
real numbers $\alpha, \beta$ satisfying (\ref{cpara}) and $x\in \NN$, write
\begin{align}\label{defadmh}
\HH ^{\alpha,\beta,N}_\F(x)=\prod_{f\in F_1}(x-f)&(x+f+\alpha+\beta+1)\prod_{f\in F_2}(x+\beta-f)(x+f+\alpha+1)\\\nonumber &\times\frac{(2x+\alpha+\beta+1)(\alpha+1)_x}{(x+\alpha+\beta+1)_{N+1}(\beta+1)_x}.
\end{align}
We say that $\alpha, \beta, N$ and $\F$ are Hahn admissible if $\HH^{\alpha,\beta,N}_\F(x)$ has constant sign for $x=0,\cdots , N $.
\end{definition}

For $u\in \RR$, we will use the notation $\hat u=\max \{-[u],0\}$ (where $[u]$ denotes the value of the floor function at $u$, i.e. $[u]=\max\{s\in \ZZ: s\le u\}$). Notice that always $u+\hat u\ge 0$.
If we write
$$
(x+\alpha+\beta+1)_{N+1}=(x+\alpha+\beta+1)_{\widehat{\alpha+\beta+1}}(x+\alpha+\beta+1+\widehat{\alpha+\beta+1})_{N+1-\widehat{\alpha+\beta+1}},
$$
we see that if $N>\widehat{\alpha+\beta+1}$ then
\begin{equation}\label{ndN}
\sign( (x+\alpha+\beta+1)_{N+1})=\sign((x+\alpha+\beta+1)_{\widehat{\alpha+\beta+1}}).
\end{equation}
This means that for $N>\widehat{\alpha+\beta+1}$ the sign of $\HH ^{\alpha,\beta,N}_\F(x)$ does not depend on $N$. Hence we have proved the following Lemma.

\begin{lemma}\label{bbl1} If for some $N_0>\widehat{\alpha+\beta+1}$, $\alpha,\beta,N_0$ and $\F$ are Hahn admissible, then for any $N>\widehat{\alpha+\beta+1}$, $\alpha,\beta,N$ and $\F$ are Hahn admissible as well.
\end{lemma}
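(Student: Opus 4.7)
The plan is to show that the pointwise sign of $\HH^{\alpha,\beta,N}_\F(x)$, as a function of $x \in \NN$, does not change as $N$ varies over $\{N : N > \widehat{\alpha+\beta+1}\}$, and then to deduce from this $N$-invariance that Hahn admissibility transfers from $N_0$ to any other such $N$.

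First I would inspect the defining formula (\ref{defadmh}): the only place where $N$ appears in $\HH^{\alpha,\beta,N}_\F(x)$ is the denominator Pochhammer $(x+\alpha+\beta+1)_{N+1}$. Using the factorisation displayed just before (\ref{ndN}), for $N > \widehat{\alpha+\beta+1}$ this factor splits as an $N$-free block $(x+\alpha+\beta+1)_{\widehat{\alpha+\beta+1}}$ times $(x+\alpha+\beta+1+\widehat{\alpha+\beta+1})_{N+1-\widehat{\alpha+\beta+1}}$, and the latter is a product of factors $x+\alpha+\beta+1+j$ with $j \ge \widehat{\alpha+\beta+1}$, each of which is strictly positive for $x \in \NN$ (and nonvanishing by (\ref{cpara})). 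This makes (\ref{ndN}) explicit and shows that $\sigma(x) := \sign \HH^{\alpha,\beta,N}_\F(x)$ is well-defined as a function of $x \in \NN$ alone, taking the same value for every $N > \widehat{\alpha+\beta+1}$.

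With $\sigma$ in hand, both the hypothesis and the conclusion become statements about the same function: the hypothesis reads $\sigma \equiv \epsilon$ on $\{0,\ldots,N_0\}$ for some $\epsilon \in \{\pm 1\}$, and admissibility at $N$ requires $\sigma \equiv \epsilon$ on $\{0,\ldots,N\}$ for the target $N > \widehat{\alpha+\beta+1}$. The case $N \le N_0$ is immediate since the target range is contained in the hypothesis range. For $N > N_0$, the extra indices $\{N_0+1,\ldots,N\}$ all satisfy $x \ge \widehat{\alpha+\beta+1}$, so the denominator Pochhammer is positive there, and $\sigma(x)$ is controlled by the numerator: a polynomial in $x$ of positive leading coefficient multiplied by the $N$-free ratio $(\alpha+1)_x/(\beta+1)_x$. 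Each of these has a sign that stabilises for $x$ beyond a threshold depending only on $\alpha,\beta$ and the roots coming from $F_1 \cup F_2$; since $N_0 > \widehat{\alpha+\beta+1}$, the admissibility at $N_0$ already samples $\sigma$ inside that stabilised regime, so the stabilised value must be $\epsilon$, and this propagates to $\{N_0+1,\ldots,N\}$.

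The main ingredient is the $N$-invariance statement captured in (\ref{ndN}), which reduces the lemma to a claim about a single function $\sigma$. The only nontrivial step I anticipate is cleanly formalising the stabilization-of-signs argument that handles the case $N > N_0$, for which a short case analysis on the signs of $(\alpha+1)_x$ and $(\beta+1)_x$ in terms of $\widehat{\alpha+1}$ and $\widehat{\beta+1}$ should suffice.
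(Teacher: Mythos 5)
Your first paragraph is exactly the paper's proof: the only occurrence of $N$ in (\ref{defadmh}) is the factor $(x+\alpha+\beta+1)_{N+1}$, and the factorization displayed before (\ref{ndN}) shows that for $N>\widehat{\alpha+\beta+1}$ its sign equals that of the $N$-free block $(x+\alpha+\beta+1)_{\widehat{\alpha+\beta+1}}$, so that $\sign \HH^{\alpha,\beta,N}_\F(x)$ is, for each fixed $x\in\NN$, independent of $N$. The paper stops at that point and declares the lemma proved.

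Your second paragraph goes further: you correctly notice that Definition \ref{dadh} tests sign constancy on the range $\{0,\ldots,N\}$, which grows with $N$, so that for $N>N_0$ the new sample points must be accounted for. This is a genuine subtlety that the paper's one-line proof passes over in silence; unfortunately, the argument you give to handle it does not work. The threshold beyond which the sign of the numerator stabilizes is governed by $\max F_1$, $-[\beta]+\max F_2$, $-[\alpha]$ and $-[\beta]$ (compare the remark following Definition \ref{dadj}), and none of these is controlled by $\widehat{\alpha+\beta+1}$; hence $N_0>\widehat{\alpha+\beta+1}$ does not place $\{0,\ldots,N_0\}$ inside the stabilized regime, and the step ``the stabilised value must be $\epsilon$'' fails. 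Concretely, take $\alpha=\beta=0$, $F_1=\{2\}$, $F_2=\emptyset$, $N_0=1>0=\widehat{\alpha+\beta+1}$: condition (\ref{cpara}) holds and $\HH^{0,0,1}_\F(x)=(x-2)(x+3)(2x+1)/\bigl((x+1)(x+2)\bigr)$ is negative at $x=0,1$, so $\alpha,\beta,N_0,\F$ are Hahn admissible, yet the stabilized sign (attained from $x=3$ onward) is positive, so sign constancy on $\{0,\ldots,N\}$ already fails at $N=3$. This gap cannot be closed along your route under the stated hypotheses: one must either read the admissibility condition as a statement over an $x$-range independent of $N$ (which is how the paper implicitly uses it, in analogy with the Jacobi version) or impose a lower bound on $N_0$ depending also on $\F$, $-[\alpha]$ and $-[\beta]$, not merely on $\widehat{\alpha+\beta+1}$.
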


We define the Jacobi admissibility as follows.

\begin{definition}\label{dadj} Let $\F=(F_1,F_2)$ be a pair of finite sets of positive integers. For
real numbers $\alpha, \beta$ satisfying
\begin{equation}\label{cpara2}
\alpha,\beta, \alpha +\beta\not=-1,-2,\cdots ,
\end{equation}
and $x\in \NN$, write
\begin{align}\label{defadmj}
\HH^{\alpha,\beta}_\F(x)=\prod_{f\in F_1}(x-f)&(x+f+\alpha+\beta+1)\prod_{f\in F_2}(x+\beta-f)(x+f+\alpha+1)\\\nonumber &\times\frac{\Gamma(x+\alpha+1)\Gamma(x+\beta+1)}{(2x+\alpha+\beta+1)\Gamma(x+\alpha+\beta+1)}.
\end{align}
We say that $\alpha, \beta $ and $\F$ are Jacobi admissible if $\HH^{\alpha,\beta}_\F(x)\ge 0$ for $x\in \NN $.
\end{definition}

Notice that the condition $x\in \NN$ can be changed to
$$
x\in \{0,1,\cdots , \max(\max F_1,-[\beta]+\max F_2,-[\alpha],-[\beta],-[\alpha+\beta])+1\}.
$$
\begin{remark}\label{rm1}
Since for $u\in \RR$
\begin{align*}
(x+u)_{N+1}&=(x+u)_{\widehat{u}}(x+u+\widehat{u})_{N+1-\widehat{u}}=\frac{\Gamma(x+u+\hat u)(x+u+\widehat{u})_{N+1-\widehat{u}}}{\Gamma(x+u)},\\
(u)_x&=\frac{\Gamma(x+u)}{\Gamma(u)},
\end{align*}
we get $\sign (\HH^{\alpha,\beta,N}_\F(x))=\sign (\HH^{\alpha,\beta}_\F(x))\sign (\Gamma(\alpha+1)\Gamma(\beta+1))$. Hence
for $N>\widehat{\alpha+\beta+1}$  the Jacobi admissibility of $\alpha,\beta$ and $\F$ implies the Hahn admissibility of
$\alpha, \beta , N$ and $\F$. Part 1 and 2 of Lemma \ref{ladm} below show  that the Hahn admissibility of $\alpha, \beta , N$ and $\F$ for some $N>\widehat{\alpha+\beta+1}$ also implies the Jacobi admissibility of $\alpha,\beta$ and $\F$.
\end{remark}

This admissibility concept is more involve than the corresponding for exceptional Charlier and Hermite or Meixner and Laguerre polynomials
(see \cite{duch}, p. 31 and \cite{dume}, definition 1.2, respectively). The admissibility depends now on two (Jacobi) or three (Hahn) parameters, while for Charlier and Hermite no parameter is involved and for Meixner and Laguerre, only one parameter is involved.

We have not found in the literature a definition as (\ref{dadh}) or (\ref{dadj}) for Hahn and Jacobi admissibility, respectively.

In the following Lemma we include some  consequences derived from the definitions \ref{dadh} and \ref{dadj}.

\begin{lemma}\label{ladm} Given a positive integer $N$, real numbers $\alpha,\beta$ satisfying (\ref{cpara}) and a pair $\F $ of finite sets of positive integers, we have
\begin{enumerate}
\item assume that either $\alpha, \beta$ and $\F$ are Jacobi admissible or $\alpha, \beta, N$ and $\F$ are Hahn admissible for $N>\widehat{\alpha+\beta+1}$,  then $\alpha+k>-1$ and $\beta+k_1-k_2>-1$.
\item If $\alpha,\beta, N$ and $\F$ are Hahn admissible and $N>\widehat{\alpha+\beta+1}$ then the sign of $(\alpha +1)_k(\beta+1)_{k_1-k_2}$ is equal to the sign of $\HH ^{\alpha,\beta,N}_\F$.
\item If $\alpha,\beta$ and $\F$ are Jacobi admissible and $F_1=\emptyset$ then $\alpha+1,\beta+1$ and $\F$ are Jacobi admissible as well.
\item Assume that $\alpha,\beta$ and $\F$ are Jacobi admissible and that $s_\F+\alpha+\beta+1>0$. Then $\alpha+s_\F,\beta+s_\F$ and $\F_\Downarrow$ are Jacobi admissible as well.
\end{enumerate}
\end{lemma}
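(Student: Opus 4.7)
\emph{Plan.} The proof splits into four parts, unified by the identity from Remark~\ref{rm1}:
$$\sign\HH^{\alpha,\beta,N}_\F(x)=\sign\HH^{\alpha,\beta}_\F(x)\cdot\sign(\Gamma(\alpha+1)\Gamma(\beta+1)),$$
valid for $N>\widehat{\alpha+\beta+1}$ and $x\in\NN$. Together with Lemma~\ref{bbl1}, this lets me enlarge $N$ at will and work primarily with the Jacobi object $\HH^{\alpha,\beta}_\F$ on $\NN$, translating to the Hahn picture when needed.

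\emph{Parts 3 and 4 (shift identities).} For part 4, I would verify by direct substitution into (\ref{defadmj}) the identity
\begin{equation*}
\HH^{\alpha+s_\F,\beta+s_\F}_{\F_\Downarrow}(x)=\frac{\HH^{\alpha,\beta}_\F(x+s_\F)}{\displaystyle\prod_{j=1}^{s_\F-1}(x+j)(x+2s_\F-j+\alpha+\beta+1)\,(x+\alpha+\beta+s_\F+1)_{s_\F}}.
\end{equation*}
Since $f_j=j$ for $j<s_\F$, the $F_1$-indices $f=1,\ldots,s_\F-1$ on the right contribute the polynomial factors $(x+j)(x+2s_\F-j+\alpha+\beta+1)$ in the denominator; the remaining $F_1$-indices shift to match $(F_1)_\Downarrow$ under $(\alpha,\beta)\mapsto(\alpha+s_\F,\beta+s_\F)$; the $F_2$-polynomial factors are invariant under the combined shift $x\mapsto x+s_\F$, $(\alpha,\beta)\mapsto(\alpha+s_\F,\beta+s_\F)$; and the Gamma quotient produces the factor $(x+\alpha+\beta+s_\F+1)_{s_\F}$ via $\Gamma(x+\alpha+\beta+2s_\F+1)/\Gamma(x+\alpha+\beta+s_\F+1)$. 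Under $s_\F+\alpha+\beta+1>0$ each factor of the denominator is positive on $\NN$, so the result follows. Part 3 is the specialization $F_1=\emptyset$, $s_\F=1$: the denominator reduces to $x+\alpha+\beta+2$, and part 1 applied to the original admissible data with $F_1=\emptyset$ gives $\alpha+k_2>-1$ and $\beta-k_2>-1$, so $\alpha+\beta>-2$ automatically.

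\emph{Part 2.} By Lemma~\ref{bbl1} I take $N$ arbitrarily large without losing Hahn admissibility. Evaluating $\HH^{\alpha,\beta,N}_\F(x)$ at an $x\in\{0,\ldots,N\}$ chosen so large that every polynomial factor $(x-f)$, $(x+f+\alpha+\beta+1)$, $(x+\beta-f)$, $(x+f+\alpha+1)$ and the Pochhammer symbol $(x+\alpha+\beta+1)_{N+1}$ has its stable large-$x$ sign, the only non-trivial contribution to $\sign\HH^{\alpha,\beta,N}_\F(x)$ is from $(\alpha+1)_x/(\beta+1)_x$, giving $(-1)^{\widehat{\alpha+1}+\widehat{\beta+1}}$. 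By part~1, $\widehat{\alpha+1}\le k$ and $\widehat{\beta+1}\le\max(0,k_1-k_2)$; a direct sign computation for Pochhammer symbols with possibly negative index then shows $\sign((\alpha+1)_k(\beta+1)_{k_1-k_2})=(-1)^{\widehat{\alpha+1}+\widehat{\beta+1}}$, which matches the global sign of $\HH^{\alpha,\beta,N}_\F$.

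\emph{Part 1 and main obstacle.} Stirling's formula yields $\Gamma(x+\alpha+1)\Gamma(x+\beta+1)/\Gamma(x+\alpha+\beta+1)\sim\Gamma(x+1)$ as $x\to+\infty$, so $\HH^{\alpha,\beta}_\F(x)>0$ for $x\in\NN$ sufficiently large, and admissibility forces $\HH^{\alpha,\beta}_\F\ge 0$ throughout $\NN$. The only integer zeros of $\HH^{\alpha,\beta}_\F$ on $\NN$ come from $(x-f)$, $f\in F_1$, together with $(x+\beta-f)$, $f\in F_2$, when $\beta\in\ZZ$. Walking $x$ downward through $\NN$ from the stable positive regime, every sign flip produced when the Gamma factors $\Gamma(x+\alpha+1)$, $\Gamma(x+\beta+1)$, $1/\Gamma(x+\alpha+\beta+1)$ cross a negative integer pole must be absorbed by a corresponding sign flip from one of the $2k$ polynomial factors $(x+f+\alpha+\beta+1)$, $(x+f+\alpha+1)$. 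Counting the admissible cancellations forces $\widehat{\alpha+1}\le k$, i.e.\ $\alpha+k>-1$, and a parallel analysis with the $\beta$-side factors gives $\beta+k_1-k_2>-1$. The main technical obstacle is carrying out this enumeration cleanly across all possible positions of $\alpha$, $\beta$, and $\alpha+\beta$ relative to the negative integers; this is the reason part 1 is the heart of the lemma.
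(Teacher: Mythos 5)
Your parts 3 and 4 are correct and coincide with the paper's proof: your identity for $\HH^{\alpha+s_\F,\beta+s_\F}_{\F_\Downarrow}$ is, after reindexing, exactly the displayed identity in the paper's proof of part 4, and your part 3 is its specialization $s_\F=1$, $\F_\Downarrow=\F$, using part 1 to get $x+\alpha+\beta+2>0$. Your part 2 is also correct \emph{modulo part 1}, and it is a genuinely different (and more elementary) route: the paper instead deduces part 2 from the positivity of the limiting measure $\tau_N$ together with part ii of Lemma \ref{l3.1}, whereas you read the sign off directly at a large lattice point after enlarging $N$ via Lemma \ref{bbl1}; your Pochhammer sign computation from $\widehat{\alpha+1}\le k$ and $\widehat{\beta+1}\le\max(0,k_1-k_2)$ checks out.

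The genuine gap is part 1, which is the heart of the lemma and the one place where the paper does something substantial: it does \emph{not} prove part 1 by sign-counting on $\NN$, but defers it to the proof of part 1 of Theorem \ref{convth}. There one passes to the limit $N\to\infty$ in the (constant-sign) measure $\tau_N$ built from the exceptional Hahn orthogonality, obtains a uniform bound for $\int_u^v(1-x)^{\alpha+k}(1+x)^{\beta+k_1-k_2}\bigl(\Omega_{\F_\Downarrow}^{\alpha+s_\F,\beta+s_\F}\bigr)^2/\bigl(\Omega_\F^{\alpha,\beta}\bigr)^2\,dx$, and uses Lemma \ref{v0le} ($\Omega_{\F_\Downarrow}^{\alpha+s_\F,\beta+s_\F}(\pm1)\neq0$) to force divergence of that integral near $\pm1$ if $\alpha+k\le-1$ or $\beta+k_1-k_2\le-1$. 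Your proposed replacement --- walking $x$ downward and matching sign flips of the Gamma quotient against the $2k$ polynomial factors --- is not carried out, and as stated it does not work: a sign flip of $\Gamma(x+\alpha+1)$ between consecutive integers can also be compensated by the factor $2x+\alpha+\beta+1$, by $\Gamma(x+\beta+1)$ or $1/\Gamma(x+\alpha+\beta+1)$ flipping at the same step, or by one of the factors $(x-f)$, $(x+\beta-f)$ changing sign; moreover even a successful pairing of \emph{all} Gamma-induced flips with the $2k$ factors $(x+f+\alpha+\beta+1)$, $(x+f+\alpha+1)$ (each of which involves both $\alpha$ and $\beta$) would at best bound a combination of $\widehat{\alpha+1}$, $\widehat{\beta+1}$ and $\widehat{\alpha+\beta+1}$, not deliver the two separate sharp inequalities $\widehat{\alpha+1}\le k$ and $\widehat{\beta+1}\le\max(0,k_1-k_2)$. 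You have correctly identified this enumeration as the main obstacle, but identifying it is not overcoming it; since parts 2 and 3 of your argument both rest on part 1, the proposal as a whole is incomplete.
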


\begin{proof}

\noindent
Both parts 1 and 2 of this Lemma will be proved just after the proof of Part 1 of Theorem \ref{convth}.

\noindent
Proof of part 3.  Indeed, a direct computation gives
\begin{equation}\label{adm6}
\HH _{\F}^{\alpha+1,\beta+1}(x)=\frac{\HH _{\F}^{\alpha,\beta}(x+1)}{x+\alpha+\beta+2}.
\end{equation}
Since $F_1=\emptyset$ (and hence $k_1=0$), using part 1 of this Lemma, we have $\alpha+k_2> -1$ and $\beta-k_2>-1$, from where we get
$\alpha+\beta>-2$. This shows that for $x\in \NN$, $x+\alpha+\beta+2>0$. Since $\alpha,\beta$ and $\F$ are Jacobi admissible, (\ref{adm6}) shows that
$\alpha+1, \beta+1$ and $\F$ are also Jacobi admissible.

\noindent
Proof of part 4. For a finite set of positive integers $F$, (\ref{defffd}) gives that
$$
F=\begin{cases} s_F+F_\Downarrow,& \mbox{for $s_F=1$}\\
\{1,\cdots, s_F-1\}\cup (s_F+F_\Downarrow),& \mbox{for $s_F>1$}.
\end{cases}
$$
This gives
$$
\prod_{f\in F_\Downarrow}(x-f)(x+f+2s_F+\alpha+\beta+1)=\frac{\prod_{f\in F}(x+s_F-f)(x+f+s_F+\alpha+\beta+1)}{\prod_{j=1}^{s_F-1}(x+s_F-j)(x+s_F+j+\alpha+\beta+1)}.
$$
Hence, after straightforward computation, we can write
\begin{align*}\label{adm5}
\HH _{\F_\Downarrow}^{\alpha+s_\F,\beta+s_\F}(x)&=\frac{\HH _{\F}^{\alpha,\beta}(x+s_\F)}{\prod_{j=1}^{s_\F-1}(x+s_\F-j)(x+s_\F+\alpha+\beta+1+j)^2}\\&\quad\quad\times \frac{1}{(x+s_\F+\alpha+\beta +1)}.
\end{align*}
This shows that if $\alpha,\beta$ and $\F$ are Jacobi admissible and $s_\F+\alpha+\beta +1>0$, then
$\alpha+s_\F, \beta+s_\F$ and $\F_\Downarrow$ are also Jacobi admissible.

\end{proof}

\subsection{Dual Hahn, Hahn and Jacobi polynomials}
We include here basic definitions and facts about dual Hahn, Hahn and Jacobi polynomials, which we will need in the following Sections.

For $\alpha $ and $\beta$ real numbers, we write
\begin{equation}\label{deflamb}
\lambda^{\alpha,\beta}(x)=x(x+\alpha+\beta+1).
\end{equation}
To simplify the notation we sometimes write $\lambda(x)=\lambda^{\alpha,\beta}(x)$.

For $\alpha \not =-1,-2,\cdots $ we write $(R_{n}^{\alpha,\beta,N})_n$ for the sequence of dual Hahn polynomials defined by
\begin{equation}\label{dhpol}
R_{n}^{\alpha,\beta,N}(x)=\sum _{j=0}^n(-1)^j\frac{(-n)_j(-N+j)_{n-j}}{(\alpha+1)_jj!}\prod_{i=0}^{j-1}(x-i(\alpha+\beta+1+i))
\end{equation}
(we have taken a slightly different normalization from the one used in \cite{KLS}, pp, 234-7 from where
the next formulas can be easily derived). Notice that $R_{n}^{\alpha,\beta,N}$ is always a polynomial of degree $n$.
Using that $(-1)^j\prod_{i=0}^{j-1}(\lambda^{\alpha,\beta}(x)-i(\alpha+\beta+1+i))=(-x)_j(x+\alpha+\beta+1)_j$, we get the hypergeometric representation
$$
R_{n}^{\alpha,\beta,N}(\lambda^{\alpha,\beta}(x))=(-N)_n\pFq{3}{2}{-n,-x,x+\alpha+\beta+1}{\alpha+1,-N}{1}.
$$
When $N$ is a positive integer then the polynomial $R_{n}^{\alpha,\beta,N}(x)$ for $n\ge N+1$ is always divisible by $\prod_{i=0}^{N}(x-i(\alpha+\beta+1+i))$. Hence
\begin{equation}\label{mdc1}
R_{n}^{\alpha,\beta,N}(\lambda^{\alpha,\beta}(i))=0,\quad n\ge N+1, i=0,\cdots, N.
\end{equation}
Dual Hahn polynomials  satisfy the following three term recurrence formula
\begin{align}\label{Mxttrr}
xR_n&=A_nR_{n+1}+B_nR_n+C_nR_{n-1}, \quad n\ge 0,\quad R_{-1}=0,\\\nonumber
A_n&=n+\alpha+1,\\\nonumber
B_n&=-(n+\alpha+1)(n-N)-n(n-\beta-N-1),\\\nonumber
C_n&=n(n-\beta-N-1)(n-N-1).
\end{align}
(to simplify the notation we remove the parameters in some formulas).
Hence, when $N$ is not a nonnegative integer and $\alpha ,-\beta-N-1 \not =-1,-2,\cdots $, they are always orthogonal with respect to a moment functional $w_{*,\alpha,\beta,N}$. When $N$ is a positive integer and $\alpha ,\beta \not =-1,-2,\cdots -N $, $\alpha+\beta \not=-1,\cdots, -2N-1$, we have
\begin{align}\label{masdh}
w_{*;\alpha,\beta,N}&=\sum _{x=0}^N \frac{(2x+\alpha+\beta+1)(\alpha+1)_x(-N)_xN!}{(-1)^x(x+\alpha+\beta+1)_{N+1}(\beta+1)_xx!}\delta_{\lambda(x)},
\\\label{normedh}
\langle R_n^{\alpha,\beta,N},R_n^{\alpha,\beta,N}\rangle &=\frac{(-N)_n^2}{\binom{\alpha+n}{n}\binom{\beta+N-n}{N-n}},\quad  n\in \NN .
\end{align}
Notice that $\langle R_n^{\alpha,\beta,N},R_n^{\alpha,\beta,N}\rangle\not =0$ only for $0\le n\le N$.
The moment functional $w_{*,\alpha,\beta,N}$ can be represented by either a positive or a negative measure only when $N$ is a positive integer
and either $-1<\alpha,\beta$ or $\alpha,\beta<-N$, respectively.

Dual Hahn polynomials satisfy the following identity
\begin{equation}\label{sdm3}
R_{n}^{\alpha,\beta,N}(\lambda^{\alpha,\beta}(x-\beta))=R_{n}^{\alpha,-\beta,\beta+N}(\lambda^{\alpha,-\beta}(x)).
\end{equation}
\bigskip

For $\alpha, \alpha+\beta \not =-1,-2,\cdots $ we write $(h_{n}^{\alpha,\beta,N})_n$ for the sequence of Hahn polynomials defined by
\begin{equation}\label{hpol}
h_{n}^{\alpha,\beta,N}(x)=(-N)_n\pFq{3}{2}{-n,-x,n+\alpha+\beta+1}{\alpha+1,-N}{1}
\end{equation}
(we have taken a slightly different normalization from the one used in \cite{KLS}, pp, 234-7 from where
the next formulas can be easily derived). Notice that $h_{n}^{\alpha,\beta,N}$ is always a polynomial of degree $n$. When $N$ is a positive integer then
\begin{equation}\label{mdc2}
\mbox{the polynomial $h_{n}^{\alpha,\beta,N}$ for $n\ge N+1$ is always divisible by $(-x)_{N+1}$}.
\end{equation}
The hypergeometric representation of dual Hahn and Hahn polynomials show the duality
\begin{equation}\label{sdm2b}
(-N)_m h_{n}^{\alpha,\beta,N}(m)=(-N)_n R_{m}^{\alpha,\beta,N}(\lambda^{\alpha,\beta}(n)),\quad n,m\ge 0.
\end{equation}
If $N$ is not a nonnegative integer and $\alpha ,\beta,\alpha+\beta,\alpha+\beta+N \not =-1,-2,\cdots $, they are always orthogonal with respect to a moment functional $\rho_{\alpha,\beta,N}$. For $N$ a positive integer and $\alpha ,\beta \not=-1,\cdots -N $, $\alpha+\beta\not=-1,\cdots, -2N-1$  we have
\begin{align}\label{hw}
\rho_{\alpha,\beta,N}&=\sum _{x=0}^N \binom{x+\alpha}{x}\binom{\beta+N-x}{N-x}\delta_{x},\\\label{normeh}
\langle h_n^{\alpha,\beta,N},h_n^{\alpha,\beta,N}\rangle &=\frac{(-N)_n^2}{w_{*;\alpha,\beta,N}(n)},\quad  n\in \NN,
\end{align}
where $w_{*;\alpha,\beta,N}(n)$ is the mass of the dual Hahn weight at $\lambda(n)$ given by (\ref{masdh}). Notice that
$\langle h_n^{\alpha,\beta,N},h_n^{\alpha,\beta,N}\rangle\not =0$ only for $0\le n\le N$.
The moment functional $\rho_{\alpha,\beta,N}$ can be represented by either a positive or a negative measure only when $N$ is a positive integer and either $-1<\alpha,\beta$ or $\alpha,\beta<-N$, respectively.

Hahn polynomials satisfy the following identities
\begin{align}\label{sdm}
h_{n}^{\alpha,\beta, N}(x+1)-h_{n}^{\alpha,\beta, N}(x)&=\frac{\lambda^{\alpha,\beta}(n)}{\alpha+1}h_{n-1}^{\alpha+1,\beta+1, N-1}(x),\\ \label{sdm2}
s^{N-k+1}_{k+1-j}(x)s^{N+\beta-j+2}_{j-1}(x)h_{n}&^{\alpha,-\beta,\beta+ N}(x+j-1)-s^{N-k+1}_{k+2-j}(x)s^{N+\beta-j+3}_{j-2}(x)h_{n}^{\alpha,-\beta, \beta+N}(x+j-2)\\ \nonumber=&\frac{(\alpha+n+1)(\beta-n)}{\alpha+1}s^{N-k+1}_{k+1-j}(x)s^{N+\beta-j+3}_{j-2}(x)h_{n}^{\alpha+1,-\beta-1,\beta+ N}(x+j-2),
\end{align}
where $s_j^u$ is the polynomial of degree $j$ defined in (\ref{defsj}).

For $\alpha,\beta \in \RR , \alpha,\beta \not=-1,-2,\cdots$, we use the standard definition of the Jacobi polynomials $(P_{n}^{\alpha,\beta})_n$
\begin{equation}\label{defjac}
P_{n}^{\alpha,\beta}(x)=2^{-n}\sum _{j=0}^n \binom{n+\alpha}{j}\binom{n+\beta}{n-j}(x-1)^{n-j}(x+1)^{j}
\end{equation}
(see \cite{EMOT}, pp. 169-173 and also \cite{KLS}, pp. 216-221).

For $\alpha,\beta, \alpha+\beta \not =-1,-2,\cdots$,
they are orthogonal with respect to a measure $\mu _{\alpha,\beta}=\mu _{\alpha,\beta}(x)dx$, which it is positive only when
$\alpha ,\beta >-1$, and then
\begin{equation}\label{jacw}
\mu_{\alpha,\beta}(x) =(1-x)^\alpha (1+x)^{\beta}, \quad -1<x<1.
\end{equation}
We will  use the following formulas
\begin{equation}\label{Lagder}
    \left(P_n^{\alpha,\beta}\right)'=\frac{n+\alpha+\beta+1}{2}P_{n-1}^{\alpha+1,\beta+1},
\end{equation}
\begin{equation}\label{Lagab}
   ((1+x)P_n^{\alpha,-\beta}(x))'=(\beta+1)
   P_n^{\alpha,-\beta}(x)-(\beta-n)P_n^{\alpha+1,-\beta-1}(x).
\end{equation}
One can obtain Jacobi polynomials from Hahn polynomials using the limit
\begin{equation}\label{blmel}
\lim_{N\to +\infty}\frac{h_n^{\alpha,\beta,N}\left(\frac{(1-x)N}{2}\right)}{(-N)_n}=\frac{n!P_n^{\alpha,\beta}(x)}{(\alpha+1)_n}
\end{equation}
see \cite{KLS}, p. 207 (note that we are using for
Hahn polynomials a different normalization to that in \cite{KLS}). This limit is uniform in compact sets of $\CC$.

\section{Constructing polynomials which are eigenfunctions of second order difference operators}
We assume a number of constrains on the parameters $\alpha, \beta ,N$ and the pair $\F=(F_1,F_2)$ of finite sets of positive integers. We always assume
\begin{equation}\label{cpar11}
\alpha,\beta,\alpha +\beta\not =-1,-2,\cdots .
\end{equation}
In addition, we also assume
\begin{equation}\label{cpar12}
\alpha-\beta,\beta-f_{k_2}^{2\rceil}-1\not =-1,-2,\cdots ,\qquad \mbox{if $F_2\not =\emptyset$}
\end{equation}
(let us recall that $f_{k_2}^{2\rceil}$ is the maximum number in $F_2$). These assumptions are needed to define the polynomials (\ref{defmex}) below, and to guarantee that the polynomial $h_n^{\alpha, \beta, N;\F}$ has degree $n$
(actually the assumption $\beta-f_{k_2}^{2\rceil}-1\not =-1,-2,\cdots $ can be changed to the weaker one $\beta \not \in (F_2-F_1)\cup \cup_{n\in \sigma_\F}(-n+u_\F+F_2)$). We do not need to assume at this stage that $N$ is a positive integer.

\begin{definition}
Let $\F =(F_1,F_2)$ be a pair of finite sets of positive integers.
We define the polynomials $h_n^{\alpha, \beta, N;\F}$, $n\in \sigma _\F$, as
\begin{equation}\label{defmex}
h_n^{\alpha, \beta, N;\F}(x)= \frac{ \left|
  \begin{array}{@{}c@{}lccc@{}c@{}}
    & h_{n-u_\F}^{\alpha ,\beta ,N}(x+j-1) &&\hspace{-.9cm}{}_{1\le j\le k+1} \\
    \dosfilas{ h_{f}^{\alpha ,\beta ,N}(x+j-1) }{f\in F_1} \\
    \dosfilas{ s_{k-j+1}^{N-k+1}(x)s_{j-1}^{N+\beta+1}(x+j-1)h_{f}^{\alpha ,-\beta ,\beta+N}(x+j-1) }{f\in F_2}
  \end{array}
  \hspace{-.4cm}\right|}{\prod_{i=1}^{k_2}(N-x-k+1)_{k_2-i}(N+\beta-x-i+2)_{i-1}}
\end{equation}
where $s_j^u$ is the polynomial of degree $j$ defined by $s_j^u(x)=(u-x)_j$ (see (\ref{defsj})) and the number $u_\F$ and the infinite set of nonnegative integers $\sigma _\F$ are defined by (\ref{defuf}) and (\ref{defsf}), respectively.
\end{definition}
The determinant (\ref{defmex}) should be understood as explained in  (\ref{defdosf}).

To simplify the notation, we will sometimes write $h_n^\F=h_n^{\alpha,\beta,N;\F}$.

Using Lemma \ref{lgp1}, we deduce that $h_n^\F$, $n\in \sigma _\F$, is a polynomial of degree $n$ with leading coefficient equal to
\begin{equation*}\label{lcrn}
V_{F_1}V_{F_2}\prod_{i\in \{n-u_\F\},F_1}r_i^{\alpha,\beta}\prod_{i\in F2}r_i^{\alpha,-\beta}\prod_{i\in \{n-u_\F\},F_1,f_2\in F_2}(\beta +i-f_2)\prod_{f\in F_1}(f-n+u_\F),
\end{equation*}
where $V_F$ is the Vandermonde determinant (\ref{defvdm}) and $r_i^{a,b}=\frac{(a+b+i+1)_{i}}{(a+1)_{i}}$, that is, the leading coefficient of the Hahn polynomial $h_i^{a,b,N}$. The assumptions (\ref{cpar11}) and (\ref{cpar12})  imply that the above leading coefficient does not vanish when $n\in \sigma_\F$.

With the convention that $h_n=0$ for $n<0$, the determinant (\ref{defmex}) defines a polynomial for any $n\ge 0$, but for $n\not \in \sigma_\F$ we have $h_n^\F=0$.

The most interesting case appears when $N$ is a positive integer. In this case, as a consequence of (\ref{mdc2}), for $n\ge N+u_\F+1$, the polynomial $h_n^\F$ is always divisible by $(-x)_{N-k_1+1}$.

Combining columns in (\ref{defmex}) and taking into account (\ref{sdm}) and (\ref{sdm2}), we have the alternative definition
\begin{equation}\label{defmexa}
h_n^{\alpha, \beta, N;\F}(x)= \frac{ \left|
  \begin{array}{@{}c@{}lccc@{}c@{}}
    & \prod_{i=0}^{j-2}(\lambda (n-u_\F)-\lambda(i))h_{n-u_\F-j+1}^{\alpha +j-1,\beta +j-1,N-j+1}(x)&&\hspace{-1.2cm}{}_{1\le j\le k+1} \\
    \dosfilas{\prod_{i=0}^{j-2}(\lambda (f)-\lambda(i)) h_{f-j+1}^{\alpha +j-1,\beta +j-1,N-j+1}(x) }{f\in F_1} \\
    \dosfilas{ (\alpha +f+1,\beta -f)_{j-1}s_{k-j+1}^{N-k+1}(x)h_{f}^{\alpha +j-1,-\beta -j+1 ,\beta+N}(x) }{f\in F_2}
  \end{array}
  \right|}{\prod_{i=0}^{k}(\alpha+1)_i\prod_{i=1}^{k_2}(N-x-k+1)_{k_2-i}(N+\beta-x-i+2)_{i-1}},
\end{equation}
where we are using the notation $
(a,b,\cdots,c)_j=(a)_j(b)_j\cdots (c)_j$. That expression can be rewritten using that
$
\prod_{i=0}^{j-2}(\lambda (n)-\lambda(i))=(n-j+2,n+\alpha+\beta+1)_{j-1}.
$

The  polynomials $h_n^\F$, $n\in \sigma_\F$, are strongly related by duality with the polynomials $q_n^\F$, $n\ge 0$, defined by
\begin{equation}\label{defqnme}
q_n^{\F}(x)= \frac{ \left|
  \begin{array}{@{}c@{}lccc@{}c@{}}
    & R_{n+j-1}^{\alpha ,\beta ,N}(x)&&\hspace{-.9cm}{}_{1\le j\le k+1} \\
    \dosfilas{R_{n+j-1}^{\alpha ,\beta ,N}(\lambda^{\alpha,\beta}(f))}{f\in F_1} \\
    \dosfilas{R_{n+j-1}^{\alpha ,-\beta ,\beta+N}(\lambda^{\alpha,-\beta}(f)) }{f\in F_2}
  \end{array}
  \hspace{-.3cm}\right|}{\prod_{f\in F_1}(x-\lambda^{\alpha,\beta}(f))\prod_{f\in F_2}(x-\lambda^{\alpha,\beta}(f-\beta))}.
\end{equation}

Notice that when $N$ is a positive integer, $F_1\not =\emptyset$ and $F_1\subset \{1,\cdots, N\}$, then $q_n^\F(x)=0$ for $n\ge N-k_1+2$. Indeed, if $n\ge N-k_1+2$ then for $j\ge k_1$, we have
$n+j-1\ge N+1$ and hence $R_{n+j-1}^{\alpha ,\beta ,N}(\lambda^{\alpha,\beta}(f))=0$ (see (\ref{mdc1})).

When $N$ is a positive integer, and under suitable conditions on the parameters, one can see using  (\ref{sdm3}) and Lemma \ref{sze}, that the  polynomials  $q_n^\F$, $n=0,\ldots, N-k$, are orthogonal with respect to the measure
\begin{equation}\label{mraf}
\rho _{\alpha,\beta, N}^{\F}=\sum _{x=u_\F}^{N+u_\F} \prod_{f\in F_1}(\lambda (x-u_\F)-\lambda(f))\prod_{f\in F_2}(\lambda(x-u_\F)-\lambda(f-\beta))w_{*;\alpha,\beta,N}(x-u_\F)\delta_{\lambda(x-u_\F)},
\end{equation}
where $w_{*;\alpha,\beta,N}(x)$ is the mass at $\lambda (x)$ of the dual Hahn weight given by (\ref{masdh}).

\begin{lemma}\label{lem3.2}
If $u\ge 0$ and $v\in \sigma_\F$, then
\begin{equation}\label{duaqnrn}
\kappa \zeta_vq_u^\F(\lambda^{\alpha,\beta}(v-u_\F))=\xi_uh_v^\F(u),
\end{equation}
where
\begin{align*}
\kappa&=\prod_{f\in F_1}(-N)_f\prod_{f\in F_2}(-N-\beta)_f,\\
\xi_u&=(-1)^{(k+1)(u+k/2)}(N-u+1)_u^{k_1+1}\prod_{i=0}^{k_1-1}(N-u-k_1+1+i)_{k_1-i}\\&\hspace{2.5cm}
\times (N+\beta-u+1)_u^{k_2}\prod_{i=0}^{k_2-1}(N+\beta-u-i+1)_{i},\\
\zeta_v&=(-N)_{v-u_\F}\prod_{f\in F_1}(\lambda(v-u_\F)-\lambda(f))\prod_{f\in F_2}(\lambda(v-u_\F)-\lambda(f-\beta)).
\end{align*}
\end{lemma}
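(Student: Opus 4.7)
The plan is to convert the determinant defining $h_v^\F(u)$ into the one defining $q_u^\F(\lambda^{\alpha,\beta}(v-u_\F))$ entry by entry, using the Hahn/dual-Hahn duality identity~(\ref{sdm2b}): $(-N)_m h_n^{\alpha,\beta,N}(m)=(-N)_n R_m^{\alpha,\beta,N}(\lambda^{\alpha,\beta}(n))$. Each Hahn evaluation becomes a dual-Hahn evaluation up to row and column scalars that come out of the determinant.

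First, I would multiply column $j$ of~(\ref{defmex}) by $(-N)_{u+j-1}$ for $j=1,\ldots,k+1$. Applying~(\ref{sdm2b}) with parameters $(\alpha,\beta,N)$ converts the top row into $(-N)_{v-u_\F}R_{u+j-1}^{\alpha,\beta,N}(\lambda^{\alpha,\beta}(v-u_\F))$ and each $F_1$-row into $(-N)_fR_{u+j-1}^{\alpha,\beta,N}(\lambda^{\alpha,\beta}(f))$; the row factors $(-N)_{v-u_\F}$ and $(-N)_f$ then come out. For each $F_2$-row, applying the duality with parameters $(\alpha,-\beta,\beta+N)$ converts $h_f^{\alpha,-\beta,\beta+N}(u+j-1)$ into $R_{u+j-1}^{\alpha,-\beta,\beta+N}(\lambda^{\alpha,-\beta}(f))$, extracts the row factor $(-\beta-N)_f$, and leaves the column-$j$-dependent scalar
$$
P(j):=\frac{(-N)_{u+j-1}}{(-\beta-N)_{u+j-1}}\,s_{k-j+1}^{N-k+1}(u)\,s_{j-1}^{N+\beta+1}(u+j-1).
$$

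The crux of the proof is that $P(j)$ does not actually depend on $j$; this is precisely why the $s$-factors were inserted in the definition of $h_v^\F$. A direct Pochhammer computation gives
$$
\frac{P(j+1)}{P(j)}=\frac{(N+\beta+1-u-j)(u+j-1-N)}{(N-j+1-u)(u+j-1-\beta-N)},
$$
and the sign identities $N+\beta+1-u-j=-(u+j-1-\beta-N)$ and $u+j-1-N=-(N-j+1-u)$ make this ratio equal to $1$. Hence $P(j)\equiv P(1)=(N-k+1-u)_k(N-u+1)_u/(N+\beta-u+1)_u$ factors out of the $F_2$-block as a single row constant, contributing $P(1)^{k_2}$ in total, and the remaining determinant is exactly the numerator of $q_u^\F(\lambda^{\alpha,\beta}(v-u_\F))$ from~(\ref{defqnme}).

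Finally, I would divide by the denominators in~(\ref{defmex}) and~(\ref{defqnme}) and collect all accumulated scalars. The row extractions combine to $\kappa\,\zeta_v$ on one side (recall $\zeta_v=(-N)_{v-u_\F}\prod_{f\in F_1}(\lambda-\lambda(f))\prod_{f\in F_2}(\lambda-\lambda(f-\beta))$ and $\kappa=\prod_{f\in F_1}(-N)_f\prod_{f\in F_2}(-N-\beta)_f$), while the column scalars together with $P(1)^{k_2}$ combine to $\xi_u$ on the other. The sign $(-1)^{(k+1)(u+k/2)}$ in $\xi_u$ arises from $\prod_{j=1}^{k+1}(-1)^{u+j-1}$; splitting $(-N)_{u+j-1}=(-N)_u(-N+u)_{j-1}$ and matching the two resulting subproducts against $1/P(1)^{k_2}$ and against the denominator of~(\ref{defmex}) respectively yields the four stated Pochhammer factors $(N-u+1)_u^{k_1+1}$, $(N+\beta-u+1)_u^{k_2}$, $\prod_{i=0}^{k_1-1}(N-u-k_1+1+i)_{k_1-i}$, and $\prod_{i=0}^{k_2-1}(N+\beta-u-i+1)_i$. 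The main obstacle is the $j$-independence of $P(j)$; once that is in hand, the remaining identification of $\xi_u$ is routine, if lengthy, Pochhammer bookkeeping.
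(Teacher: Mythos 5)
Your proof is correct and follows exactly the route the paper intends: the paper's own proof consists of the single sentence that the lemma ``is a straightforward consequence of the duality (\ref{sdm2b})'', and your argument is precisely that computation carried out in detail (column scaling by $(-N)_{u+j-1}$, entrywise application of the duality with parameters $(\alpha,\beta,N)$ and $(\alpha,-\beta,\beta+N)$, and the key observation that the factor $P(j)$ is $j$-independent, which I have checked, as well as the final Pochhammer bookkeeping identifying $\xi_u$). No discrepancies.
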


\begin{proof}
It is a straightforward consequence of the duality (\ref{sdm2b}) for the Hahn and dual Hahn polynomials.

\end{proof}

We now prove that the polynomials $h_n^\F$, $n\in \sigma_\F$, are eigenfunctions of a second order difference operator. To establish the result in full, we need some more notations.
We denote by $\Omega _\F ^{\alpha,\beta,N}(x)$ and $\Lambda _\F^{\alpha,\beta,N}(x)$ the functions
\begin{align}\label{defom}
\Omega _\F ^{\alpha, \beta, N}(x)&= \frac{ \left|
  \begin{array}{@{}c@{}lccc@{}c@{}}
    & &&\hspace{-1.3cm}{}_{j=1,\ldots , k} \\
    \dosfilas{ h_{f}^{\alpha ,\beta ,N}(x+j-1) }{f\in F_1} \\
    \dosfilas{ s_{k-j+1}^{N-k+1}(x)s_{j-1}^{N+\beta+1}(x+j-1)h_{f}^{\alpha ,-\beta ,\beta+N}(x+j-1) }{f\in F_2}
  \end{array}
  \hspace{-.4cm}\right|}{\prod_{i=1}^{k_2}(N-x-k+1)_{k_2-i+1}(N+\beta-x-i+2)_{i-1}},\\
\nonumber
\Lambda _\F^{\alpha, \beta, N}(x)&= \frac{ \left|
  \begin{array}{@{}c@{}lccc@{}c@{}}
    & &&\hspace{-1.3cm}{}_{j=1,\cdots , k-1,k+1} \\
    \dosfilas{ h_{f}^{\alpha ,\beta ,N}(x+j-1) }{f\in F_1} \\
    \dosfilas{ s_{k-j+1}^{N-k+1}(x)s_{j-1}^{N+\beta+1}(x+j-1)h_{f}^{\alpha ,-\beta ,\beta+N}(x+j-1) }{f\in F_2}
  \end{array}
  \hspace{-.4cm}\right|}{\prod_{i=1}^{k_2}(N-x-k+1)_{k_2-i+1}(N+\beta-x-i+2)_{i-1}}.
\end{align}
To simplify the notation we sometimes write $\Omega_\F=\Omega_\F^{\alpha, \beta, N}$, $\Lambda_\F=\Lambda_\F^{\alpha, \beta, N}$.
Using Lemma \ref{lgp1}, we deduce that $\Omega _\F$  is always a polynomial of degree
$u_\F+k_1$.  Moreover, the leading coefficient of $\Omega _\F$ is
$$
V_{F_1}V_{F_2}\prod_{f_1\in F_1}r_{f_1}^{\alpha,\beta}\prod_{f_2\in F2}r_{f_2}^{\alpha,-\beta}\prod_{f_1\in F_1,f_2\in F_2}(\beta +f_1-f_2),
$$
where $V_F$ is the Vandermonde determinant (\ref{defvdm}) and $r_i^{a,b}=\frac{(a+b+i+1)_{i}}{(a+1)_{i}}$.

In a similar way, one can see that except for $F_2=\emptyset$, $\Lambda_\F$ is not a polynomial but $(N-x-k+1)\Lambda_\F$ is always a polynomial of degree  $u_\F+k_1+1$.

As for $h_n^\F$ (see (\ref{defmexa})), we have for $\Omega_\F$ the following alternative definition
\begin{equation*}\label{defoma}
\Omega _\F(x)= \frac{ \left|
  \begin{array}{@{}c@{}lccc@{}c@{}}
    &&&\hspace{-.9cm}{}_{1\le j\le k} \\
    \dosfilas{(f-j+2,f+\alpha+\beta+1)_{j-1} h_{f-j+1}^{\alpha +j-1,\beta +j-1,N-j+1}(x)}{f\in F_1} \\
    \dosfilas{ (\alpha +f+1,\beta -f)_{j-1}s_{k-j+1}^{N-k+1}(x)h_{f}^{\alpha +j-1,-\beta -j+1 ,\beta+N}(x) }{f\in F_2}
  \end{array}
  \hspace{-.3cm}\right|}{\prod_{i=0}^{k-1}(\alpha+1)_i\prod_{i=1}^{k_2}(N-x-k+1)_{k_2-i+1}(N+\beta-x-i+2)_{i-1}}
\end{equation*}

We also need the determinants $\Phi_n^\F$ and $\Psi_n^\F$, $n=0,\ldots, N-k$, defined by
\begin{align}\label{defphme}
\Phi^\F_n&=\left|
  \begin{array}{@{}c@{}lccc@{}c@{}}
    & &&\hspace{-1.3cm}{}_{j=1,\cdots , k} \\
    \dosfilas{R_{n+j-1}^{\alpha ,\beta ,N}(\lambda^{\alpha,\beta}(f))}{f\in F_1} \\
    \dosfilas{R_{n+j-1}^{\alpha ,-\beta ,\beta+N}(\lambda^{\alpha,-\beta}(f)) }{f\in F_2}
  \end{array}
  \hspace{-.3cm}\right|,\\\label{defpsme}
\Psi_n^F&=\left|
  \begin{array}{@{}c@{}lccc@{}c@{}}
    & &&\hspace{-1.3cm}{}_{j=1,\cdots , k-1, k+1} \\
    \dosfilas{R_{n+j-1}^{\alpha ,\beta ,N}(\lambda^{\alpha,\beta}(f))}{f\in F_1} \\
    \dosfilas{R_{n+j-1}^{\alpha ,-\beta ,\beta+N}(\lambda^{\alpha,-\beta}(f)) }{f\in F_2}
  \end{array}
  \hspace{-.3cm}\right|.
\end{align}
Using the duality (\ref{sdm2b}), we have
\begin{align}\label{duomph}
\xi _n\Omega _\F(n)&=(-1)^{k_2}\kappa(-N)_{n+k_1}\Phi_n^\F , \\\label{dulaps}
\xi _n(N-n-k+1)\Lambda _\F(n)&=(-1)^{{k_2}+1}\kappa(-N)_{n+k_1}\Psi_n^\F .
\end{align}

\begin{theorem}\label{th3.3} Let $\F=(F_1,F_2)$ be a pair of finite sets of positive integers. Then the polynomials $h_n^\F$ (\ref{defmex}), $n\in \sigma _\F$, are common eigenfunctions of the second order difference operator
\begin{equation}\label{sodomex}
D_\F=h_{-1}(x)\Sh_{-1}+h_0(x)\Sh_0+h_1(x)\Sh_{1},
\end{equation}
where
\begin{align}\label{jpm1}
h_{-1}(x)&=\frac{x(x-\beta-N-1+k_2)\Omega_\F(x+1)}{\Omega_\F(x)},\\\label{jpm2}
h_0(x)&=-(x+k)(x-\beta-N-1+k)-(x+\alpha+1+k)(x-N+k)\\\nonumber &\quad \quad +\Delta\left(\frac{(x+\alpha+k)(x-N-1+k))\Lambda_\F(x)}{\Omega_\F(x)}\right),\\\label{jpm3}
h_1(x)&=\frac{(x+\alpha+k+1)(x-N+k_1)\Omega_\F(x)}{\Omega_\F(x+1)},
\end{align}
and $\Delta $ denotes the first order difference operator $\Delta f=f(x+1)-f(x)$. Moreover $D_\F(h_n^\F)=\lambda(n-u_\F)h_n^\F$, $n\in \sigma_\F$.
\end{theorem}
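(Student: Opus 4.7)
The plan is to exploit the duality of Lemma \ref{lem3.2} between $h_n^\F$ and the polynomials $q_n^\F$ of (\ref{defqnme}). Up to normalization, $q_n^\F$ is the Christoffel transform of the dual Hahn family by the polynomial $\prod_{f\in F_1}(x-\lambda(f))\prod_{f\in F_2}(x-\lambda(f-\beta))$, so by Favard's theorem it satisfies a three-term recurrence
$$\lambda(x-u_\F)\, q_n^\F(x) = a_n q_{n+1}^\F(x) + b_n q_n^\F(x) + c_n q_{n-1}^\F(x).$$
Setting $x = \lambda(v-u_\F)$ with $v\in \sigma_\F$ and applying (\ref{duaqnrn}) to each term dualizes this into
$$\lambda(v-u_\F)\, h_v^\F(u) = \frac{a_u\xi_{u+1}}{\xi_u}\, h_v^\F(u+1) + b_u\, h_v^\F(u) + \frac{c_u\xi_{u-1}}{\xi_u}\, h_v^\F(u-1),$$
which is precisely the eigenvalue equation for a second order difference operator acting in the variable $u$, with eigenvalue $\lambda(v-u_\F)$. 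It therefore suffices to identify these three coefficients with (\ref{jpm3}), (\ref{jpm2}), (\ref{jpm1}).

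I would extract $a_n, b_n, c_n$ from the Christoffel formula (\ref{mata00}) applied to $R_n^{\alpha,\beta,N}$. The leading coefficient of $q_n^\F$ equals $(-1)^k\Phi_n^\F/(\alpha+1)_{n+k}$, giving $a_n = (n+k+\alpha+1)\Phi_n^\F/\Phi_{n+1}^\F$, and (\ref{n2q}) combined with the norm (\ref{normedh}) and $a_{n-1}\langle q_n^\F,q_n^\F\rangle = c_n\langle q_{n-1}^\F,q_{n-1}^\F\rangle$ yields $c_n = n(n-N-1)(n-\beta-N-1)\Phi_{n+1}^\F/\Phi_n^\F$. For the middle coefficient I would use $b_n = B^Q_n/A^Q_n - B^Q_{n+1}/A^Q_{n+1}$, where $A^Q_n, B^Q_n$ are the two top coefficients of $q_n^\F$. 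Expanding the Christoffel determinant along its first row identifies $\Psi_n^\F$ (the minor omitting the $R_{n+k-1}$-column) as the next cofactor and gives
$$B^Q_n/A^Q_n \;=\; e_1 + \tau_{n+k} - (n+k+\alpha)\,\Psi_n^\F/\Phi_n^\F,$$
where $e_1$ is the first elementary symmetric function of the Christoffel roots (it cancels in the telescoping difference), and $\tau_m$ is the analogous subleading-to-leading ratio for $R_m^{\alpha,\beta,N}$, satisfying $\tau_m - \tau_{m+1} = -(m+\alpha+1)(m-N) - m(m-\beta-N-1)$ by matching $x^n$-coefficients in the dual Hahn TTRR (\ref{Mxttrr}). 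Evaluating this last relation at $m=n+k$ reproduces precisely the first two summands of $h_0(n)$ in (\ref{jpm2}).

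The last step is to translate $\Phi_n^\F, \Psi_n^\F$ and the ratios $\xi_{u\pm 1}/\xi_u$ into $\Omega_\F, \Lambda_\F$ via (\ref{duomph}), (\ref{dulaps}). Direct simplification of the explicit product formula for $\xi_u$ in Lemma \ref{lem3.2} gives $\xi_{u+1}/\xi_u = (-1)^{k+1}(N-u-k_1)_{k_1+1}(N+\beta-u-k_2+1)_{k_2}$, and combined with (\ref{duomph}) all the $\xi$-factors cancel in $a_u\xi_{u+1}/\xi_u$, producing $h_1(u) = (u+\alpha+k+1)(u-N+k_1)\Omega_\F(u)/\Omega_\F(u+1)$. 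An analogous cancellation, using the sign identities $(u-N-1) = -(N-u+1)$ and $(u-\beta-N-1) = -(N+\beta-u+1)$, leaves $h_{-1}(u) = u(u-\beta-N-1+k_2)\Omega_\F(u+1)/\Omega_\F(u)$. Dividing (\ref{dulaps}) by (\ref{duomph}) gives $\Psi_n^\F/\Phi_n^\F = (n+k-N-1)\Lambda_\F(n)/\Omega_\F(n)$, so the $\Psi/\Phi$ contribution to $b_n$ becomes exactly the $\Delta$ term in (\ref{jpm2}). The main obstacle is the bookkeeping for the middle coefficient: the expansion of the Christoffel determinant for $q_n^\F$ involves both the Hahn-type and the rescaled ($\beta\to -\beta$) dual Hahn rows appearing in (\ref{defqnme}), so the identification of $\Psi_n^\F$ as the second cofactor with the correct sign, together with the matching of the $\tau$-telescoping piece against the two quadratic summands of $h_0$, must be tracked carefully. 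Once these match, the identity $D_\F(h_n^\F) = \lambda(n-u_\F)h_n^\F$ is established on $\sigma_\F$, and trivially for $n\notin \sigma_\F$ since $h_n^\F \equiv 0$ there.
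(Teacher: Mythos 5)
Your proposal is correct and follows essentially the same route as the paper: the paper's proof (deferred to Theorem 3.3 of \cite{duch}) likewise derives the three term recurrence relation for the Christoffel-transformed dual Hahn polynomials $q_n^\F$ from (\ref{Mxttrr}) and then dualizes it through (\ref{duaqnrn}), (\ref{duomph}) and (\ref{dulaps}) to obtain the second order difference equation in $x$. Your explicit identifications of the coefficients (in particular $\xi_{u+1}/\xi_u$, the ratio $\Psi_n^\F/\Phi_n^\F=(n+k-N-1)\Lambda_\F(n)/\Omega_\F(n)$, and the telescoping of the subleading coefficients giving the two quadratic summands of $h_0$) all check out.
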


\begin{proof}
The proof is similar to that of Theorem 3.3 in \cite{duch} but using here the three term recurrence relation for the dual Hahn  polynomials
(\ref{Mxttrr}) and the dualities (\ref{duaqnrn}), (\ref{duomph}) and (\ref{dulaps}).
\end{proof}

\bigskip

The determinantal definition (\ref{defmex}) of the polynomials $h_n^\F$, $n\in \sigma_\F$, automatically implies a factorization for the corresponding difference operator $D_\F$ (\ref{sodomex}) in two difference operators of order $1$. This is a consequence of the Sylvester identity (see \cite{Gant}, pp. 32, or \cite{duch}, Lemma 2.1). This can be done by choosing one of the components of $\F=(F_1,F_2)$ and removing one element in the chosen component. An iteration of this procedure shows
that the polynomials $h_n^\F$, $n\in \sigma_\F$, and the corresponding difference operator $D_\F$  can be constructed by applying a sequence of at most $k$ Darboux transform (see Definition 2.1 in \cite{dume}) to the Hahn system (where $k$ is the number of elements of $\F$). We display the details in the following lemma, where we remove one element of the component $F_2$ of $\F$, and hence we have to assume $F_2\not =\emptyset$. A similar result can be proved by removing one element of the component $F_1$. The proof proceeds in the same way as the proof of Lemma 3.6 in \cite{duch} and it is omitted.

\begin{lemma}\label{lfe} Let $\F=(F_1,F_2)$ be a pair of finite sets of positive integers and assume $F_2\not =\emptyset$. We define the first order difference operators $A_\F$ and $B_\F$ as
\begin{align}
A_\F&=\frac{(-x+\beta+N-k_2+1)\Omega _\F(x+1)}{\Omega_{\F_{2,\{ k_2\} }}(x+1)}\Sh_0-\frac{(-x+N-k_1)\Omega _\F(x)}{\Omega_{\F_{2,\{ k_2\} }}(x+1)}\Sh_1,
\\
B_\F&=\frac{-x\Omega _{\F_{2,\{ k_2\} }}(x+1)}{\Omega_{\F}(x)}\Sh_ {-1}+\frac{(x+\alpha+k)\Omega _{\F_{2,\{ k_2\} }}(x)}{\Omega_{\F}(x)}\Sh_0,
\end{align}
where $k_2$ is the number of elements of $F_2$ and the pair $\F_{2,\{ k_2\} }$ is defined by (\ref{deff2}).
Then $h_{n+u_\F}^\F(x)=A_\F(h_{n+u_{\F_{2,\{ k_2\}}}}^{\F_{2,\{ k_2\} }})(x)$, $n\not \in F_1$. Moreover
\begin{align*}
D_{\F_{2,\{ k_2\} }}&=B_\F A_\F-(\alpha+f_{k_2}^{2\rceil}+1)(\beta-f_{k_2}^{2\rceil})Id,\\
D_{\F}&=A_\F B_\F-(\alpha+f_{k_2}^{2\rceil}+1)(\beta-f_{k_2}^{2\rceil})Id.
\end{align*}
In other words, the system $(D_F,(h_n^\F)_{n\in \sigma _\F})$ can be obtained by applying a Darboux transform to the system
$(D_{\F_{2,\{ k_2\} }},(h_n^{\F_{2,\{ k_2\} }})_{n\in \sigma _{\F_{2,\{ k_2\} }}})$.
\end{lemma}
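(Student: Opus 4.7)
The plan is to follow the approach of Lemma 3.6 in \cite{duch}, with the modifications needed to handle the $s_j^u$ prefactors that distinguish the $F_2$-type rows in the Hahn setting. The main tool is the Sylvester determinant identity (cited as Lemma 2.1 in \cite{duch}), which relates the determinant of a matrix, a distinguished principal minor, and the ``border'' minors obtained by removing individual boundary rows and columns.

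First I would establish the intertwining identity
\[
h_{n+u_\F}^\F(x) = A_\F\bigl( h_{n+u_{\F_{2,\{k_2\}}}}^{\F_{2,\{k_2\}}} \bigr)(x), \qquad n\not\in F_1.
\]
Plugging in the definition of $A_\F$ and clearing the common denominator $\Omega_{\F_{2,\{k_2\}}}(x+1)$, this is equivalent to the four-term polynomial identity
\[
\Omega_{\F_{2,\{k_2\}}}(x+1)\, h_{n+u_\F}^\F(x) = (-x+\beta+N-k_2+1)\Omega_\F(x+1)\, h_{n+u_{\F_{2,\{k_2\}}}}^{\F_{2,\{k_2\}}}(x) - (-x+N-k_1)\Omega_\F(x)\, h_{n+u_{\F_{2,\{k_2\}}}}^{\F_{2,\{k_2\}}}(x+1).
\]
Such an identity is precisely of the shape produced by Sylvester's identity applied to a single $(k+1)\times(k+1)$ auxiliary matrix $M$ whose various $k\times k$ minors, obtained by removing one bordering row and one bordering column, recover (up to normalizing products) the factors $\Omega_\F(x)$, $\Omega_\F(x+1)$, $h_{n+u_{\F_{2,\{k_2\}}}}^{\F_{2,\{k_2\}}}(x)$ and $h_{n+u_{\F_{2,\{k_2\}}}}^{\F_{2,\{k_2\}}}(x+1)$, whose full determinant recovers $h_{n+u_\F}^\F(x)$, and whose central $(k-1)\times(k-1)$ minor recovers $\Omega_{\F_{2,\{k_2\}}}(x+1)$. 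The coefficients $-x+\beta+N-k_2+1$ and $-x+N-k_1$ then arise as the precise ratios of the normalizing prefactors in (\ref{defmex}) and (\ref{defom}) evaluated at $x$ versus $x+1$.

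Once the intertwining is in place, the two Darboux factorizations follow by a standard argument. By Theorem \ref{th3.3}, both $h_{m+u_\F}^\F$ and $h_{m+u_{\F_{2,\{k_2\}}}}^{\F_{2,\{k_2\}}}$ are eigenfunctions of their respective operators with the same eigenvalue $\lambda(m)$, and together with the intertwining this forces $D_\F A_\F = A_\F D_{\F_{2,\{k_2\}}}$ on a dense subspace of polynomials, hence as difference operators. One then verifies the first factorization $D_{\F_{2,\{k_2\}}} = B_\F A_\F - c\,Id$ with $c=(\alpha+f_{k_2}^{2\rceil}+1)(\beta-f_{k_2}^{2\rceil})$ by testing on the basis $\{h_{m+u_{\F_{2,\{k_2\}}}}^{\F_{2,\{k_2\}}}\}_{m\not\in F_1}$ and matching eigenvalues; the second factorization $D_\F = A_\F B_\F - c\,Id$ is then formal, since multiplying $B_\F A_\F = D_{\F_{2,\{k_2\}}} + c\,Id$ on the left by $A_\F$ and using the intertwining yields $(A_\F B_\F) A_\F = (D_\F + c\,Id)A_\F$, and $A_\F$ has dense range.

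The main obstacle is the combinatorial bookkeeping of the $s_j^u$ prefactors and the normalizing denominators in (\ref{defmex}) and (\ref{defom}) when translating the raw Sylvester identity into the four-term relation above with the clean coefficients claimed. In particular, the asymmetry between the $x$ and $x+1$ arguments in the $F_2$-rows (coming from the shift-in-$j$ structure of $s_{k-j+1}^{N-k+1}(x)s_{j-1}^{N+\beta+1}(x+j-1)$) must conspire exactly to produce the factors $-x+\beta+N-k_2+1$ and $-x+N-k_1$, and matching these requires patient manipulation of the ratios $s_j^u(x+1)/s_j^u(x)$ for the relevant values of $u$ and $j$.
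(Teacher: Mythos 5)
Your proposal is correct and follows essentially the same route as the paper, which omits the proof and states only that it proceeds as Lemma 3.6 of \cite{duch}, i.e.\ via the Sylvester determinant identity applied to the determinantal definitions (\ref{defmex}) and (\ref{defom}) to produce the four-term intertwining relation, followed by the standard Darboux argument for the two factorizations. The one detail to make explicit is that verifying $B_\F A_\F = D_{\F_{2,\{ k_2\} }}+(\alpha+f_{k_2}^{2\rceil}+1)(\beta-f_{k_2}^{2\rceil})\,Id$ on the eigenfunction basis also requires the companion lowering identity expressing $B_\F(h_{n+u_\F}^{\F})$ as a constant multiple of $h_{n+u_{\F_{2,\{ k_2\} }}}^{\F_{2,\{ k_2\} }}$ (cf.\ the explicit $B_{2,\F}$ relation in Lemma \ref{lfel}), which comes from a second application of Sylvester's identity.
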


Analogous factorization can be obtained by removing instead of $f_{k_2}^{2\rceil}$ any other element $f_i^{2\rceil}$ of $F_2$, $1\le i<k_2$.

\section{Exceptional Hahn polynomials}
Under certain assumptions on the parameters $\alpha,\beta, N$, in the previous Section we have associated to each pair $\F =(F_1, F_2)$ of finite sets of positive integers the polynomials $h_n^{\alpha,\beta,N;\F}$, $n\in \sigma_\F$, which are always eigenfunctions of a second order difference operator with rational coefficients.
We are interested in the cases when, in addition, those polynomials are orthogonal and complete with respect to a positive measure.

As it happens with the Hahn family, for the existence of such a positive measure the parameter $N$ must be taken to be a positive integer and there will be only  a finite number of exceptional polynomials with non-null norm in each family. More precisely, for a positive integer $N$
and a pair $\F=(F_1,F_2)$ with $F_1\varsubsetneq\{1,\cdots, N\}$, write
\begin{equation}\label{defsigN}
\sigma_{N;\F} =\sigma_\F\cap \{u_\F,\cdots , N+u_\F\}.
\end{equation}
Notice that $\sigma_{N;\F}$ has exactly $N-k_1+1$ elements. Hence, along this section we will assume $N$ to be a positive integer
and $F_1\varsubsetneq\{1,\cdots, N\}$, as well as
\begin{equation}\label{cpar21}
\alpha, \beta=-1,-2,\cdots ,-N,\quad \alpha +\beta=-1,-2,\cdots ,-2N-1.
\end{equation}
In addition, when $F_2\not =\emptyset$, we also assume
\begin{equation}\label{cpar22}
\alpha-\beta\not =-1,-2,\cdots ,-N, \quad \beta \not=f^2_{k_2},f^2_{k_2}-1,\cdots, 0.
\end{equation}
Notice that the assumptions (\ref{cpar21}) and (\ref{cpar22}) imply the condition (\ref{cpara}) that we assume to define the Hahn admissibility in Section 2.3.

As for the Hahn family, when $N$ is a positive integer we only consider the polynomials $h_n^{\alpha,\beta,N;\F}$ (see (\ref{defmex})) for $n\in \sigma_{N;\F}$. Indeed, when $n\in \sigma_\F$ and $n> N+u_\F$, the polynomial $h_n^{\alpha,\beta,N;\F}$ is divisible by $(-x)_{N-k_1+1}$. Since the orthogonalizing measure $\omega^\F_{\alpha,\beta, N}$ for the exceptional Hahn polynomials will have support in $\{ 0,1,\cdots, N-k_1\}$ (see (\ref{momex})), we have that the polynomial $h_n^{\alpha,\beta,N;\F}$, $n\in \sigma_\F$ and $n> N+u_\F$, vanishes in the support of $\omega^\F_{\alpha,\beta, N}$ and then it is useless regarding the space $L^2(\omega^\F_{\alpha,\beta, N})$.

\begin{definition} The polynomials $h_n^{\alpha,\beta,N;\F}$, $n\in \sigma_{N;\F}$, defined by (\ref{defmex}) are called exceptional Hahn polynomials, if they are orthogonal and complete with respect to a positive measure.
\end{definition}

As we point out in Section \ref{sectadm}, the key concept for the construction of exceptional Hahn polynomials is the
Hahn admissibility (see Definition \ref{dadh}). Hahn admissibility  can also be characterized in terms of the measure $\rho_{\alpha,\beta,N}^\F$ (\ref{mraf}) and the sign of the polynomial $\Omega _\F ^{\alpha,\beta, N}(x)$ in $\{0,\cdots, N-k_1\}$.

\begin{lemma}\label{l3.1} Given a positive integer $N$, real numbers $\alpha, \beta$, satisfying (\ref{cpar21}) and (\ref{cpar22}) and a pair $\F =(F_1,F_2)$ of finite sets of positive integers with $F_1\varsubsetneq\{1,\cdots, N\}$, the following conditions are equivalent.
\begin{enumerate}
\item The measure $\rho_{\alpha, \beta,N}^\F$ (\ref{mraf}) is either positive or negative.
\item $\alpha, \beta, N$ and $\F$ are Hahn admissible.
\item $\displaystyle \frac{\Omega_\F ^{\alpha,\beta, N}(n)\Omega_\F ^{\alpha,\beta, N}(n+1)}{\binom{\alpha+k+n}{n}\binom{\beta+N-k_2-n}{N-k_1-n}}$ is non-null and has constant sign for $n=0,\cdots, N-k_1$, where the polynomial $\Omega_\F^{\alpha,\beta , N}$ is defined by (\ref{defom}).
\end{enumerate}
Moreover, if any of these conditions hold, then
\begin{enumerate}
\item[i.] the sign of the measure $\rho_{\alpha, \beta,N}^\F$ is the sign of the function $\HH^{\alpha, \beta,N}_\F $ (\ref{defadmh});
\item[ii.] the constant sign mentioned in part 3 is equal to the sign of $(\alpha+1)_k(\beta+1)_{k_1-k_2}\HH^{\alpha, \beta,N}_\F $;
\item[iii.] if $N>\widehat{\alpha+\beta+1}$, then the constant sign mentioned in part 3 is positive.
\end{enumerate}
\end{lemma}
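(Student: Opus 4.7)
The three conditions express the same sign pattern indexed by $n\in\{0,\ldots,N-k_1\}$, seen respectively in the masses of $\rho_{\alpha,\beta,N}^\F$, in the scalar $\HH_\F^{\alpha,\beta,N}$, and in the Christoffel norms of the dual polynomials $q_n^\F$. My plan is to establish (1)$\Leftrightarrow$(2) together with consequence (i) by a direct computation of the masses of $\rho_{\alpha,\beta,N}^\F$, and then to obtain (2)$\Leftrightarrow$(3) and consequences (ii), (iii) by applying Lemma \ref{ldmp}(1) to $q_n^\F$, whose norms I would rewrite using (\ref{n2q}), (\ref{duomph}) and (\ref{normedh}).

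For (1)$\Leftrightarrow$(2) and (i), the mass of $\rho_{\alpha,\beta,N}^\F$ at $\lambda(y)$, $y=0,\ldots,N$, is by (\ref{mraf}) the product of $w_{*;\alpha,\beta,N}(y)$ with $\prod_{f\in F_1}(\lambda(y)-\lambda(f))\prod_{f\in F_2}(\lambda(y)-\lambda(f-\beta))$. Using the factorizations $\lambda(y)-\lambda(f)=(y-f)(y+f+\alpha+\beta+1)$ and $\lambda(y)-\lambda(f-\beta)=(y+\beta-f)(y+f+\alpha+1)$ together with (\ref{masdh}), this simplifies to $N!\binom{N}{y}\HH_\F^{\alpha,\beta,N}(y)$. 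Since the prefactor is strictly positive for $0\le y\le N$, each mass of $\rho_{\alpha,\beta,N}^\F$ shares the sign of $\HH_\F^{\alpha,\beta,N}(y)$, which gives (1)$\Leftrightarrow$(2) and (i).

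For (2)$\Leftrightarrow$(3), note that $\rho_{\alpha,\beta,N}^\F=\pp\cdot w_{*;\alpha,\beta,N}$ is the Christoffel transform of the dual Hahn measure by $\pp(\lambda)=\prod_{f\in F_1}(\lambda-\lambda(f))\prod_{f\in F_2}(\lambda-\lambda(f-\beta))$, so by Lemma \ref{sze} the polynomials $q_n^\F$ from (\ref{defqnme}) are its orthogonal polynomials with non-null norms for $n=0,\ldots,N-k_1$. Substituting (\ref{duomph}), (\ref{normedh}) and the leading coefficient $\lambda^R_n=1/(\alpha+1)_n$ into (\ref{n2q}) and rearranging Pochhammer symbols, I would arrive at an identity of the form
\begin{equation*}
\langle q_n^\F,q_n^\F\rangle_{\rho_{\alpha,\beta,N}^\F}=C\cdot\frac{\Omega_\F^{\alpha,\beta,N}(n)\,\Omega_\F^{\alpha,\beta,N}(n+1)}{\binom{\alpha+k+n}{n}\binom{\beta+N-k_2-n}{N-k_1-n}},\quad 0\le n\le N-k_1,
\end{equation*}
with $C$ a nonzero constant of fixed sign in $n$. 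Lemma \ref{ldmp}(1) then closes the equivalence: if $\rho_{\alpha,\beta,N}^\F$ has constant sign, the norms on the left share a sign, so the ratio on the right does too (i.e.\ (3)); conversely, under (3) the norms share a non-zero sign, and Lemma \ref{ldmp}(1) forces the masses of $\rho_{\alpha,\beta,N}^\F$ to share a common sign (i.e.\ (2)).

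For (ii), I would track the sign of $C$ precisely through the Pochhammer rearrangement and verify $\sign C=\sign((\alpha+1)_k(\beta+1)_{k_1-k_2})$; combined with the observation from (i) that $\sign\langle q_n^\F,q_n^\F\rangle_{\rho_{\alpha,\beta,N}^\F}=\sign\HH_\F^{\alpha,\beta,N}$ (each norm is a sum of same-sign masses weighted by squares), the displayed identity yields $\sign(\text{ratio})=\sign((\alpha+1)_k(\beta+1)_{k_1-k_2})\sign\HH_\F^{\alpha,\beta,N}$, which is (ii). For (iii), when $N>\widehat{\alpha+\beta+1}$ Lemma \ref{ladm}(2) (proved later in the paper) identifies $\sign\HH_\F^{\alpha,\beta,N}$ with $\sign((\alpha+1)_k(\beta+1)_{k_1-k_2})$, so the sign in (ii) becomes $[\sign((\alpha+1)_k(\beta+1)_{k_1-k_2})]^2=+1$. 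The principal obstacle is the Pochhammer bookkeeping that produces the displayed identity with $\sign C=\sign((\alpha+1)_k(\beta+1)_{k_1-k_2})$: one must check that the $n$-dependent factor $\xi_n\xi_{n+1}(-N)_n^2/[(-N)_{n+k_1}(-N)_{n+k_1+1}]$ rearranges with $[\binom{\alpha+n}{n}\binom{\beta+N-n}{N-n}]^{-1}$ into $[\binom{\alpha+k+n}{n}\binom{\beta+N-k_2-n}{N-k_1-n}]^{-1}$, up to a constant in which only $(\alpha+1)_k(\beta+1)_{k_1-k_2}$ survives. This is parallel to the analogous computations of \cite{duch} and \cite{dume}.
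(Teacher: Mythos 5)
Your proposal is correct and follows essentially the same route as the paper: parts (1)$\Leftrightarrow$(2) and (i) via the factorization $\lambda(u)-\lambda(v)=(u-v)(u+v+\alpha+\beta+1)$ applied to the masses of $\rho_{\alpha,\beta,N}^\F$, and parts (2)$\Leftrightarrow$(3), (ii), (iii) via the norm identity (\ref{n2q}) combined with the duality (\ref{duomph}), Lemma \ref{ldmp}(1), and the forward reference to Lemma \ref{ladm}. The Pochhammer bookkeeping you flag as the remaining obstacle is exactly the computation the paper carries out, yielding the ratio $\frac{(N-n)!(\beta+1)_{k_1-k_2}}{(N-k_1-n)!(\alpha+1)_{k}}$, whose sign is constant in $n$ as you anticipated.
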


\begin{proof}
Since
\begin{equation}\label{pmc}
\lambda(u)-\lambda(v)=(u-v)(u+v+\alpha+\beta+1),
\end{equation}
the equivalence between parts 1 and 2 (as well as part i) is an easy consequence of the definitions of Hahn admissibility (\ref{defadmh}) and of the measure $\rho _{\alpha,\beta, N}^\F$.

We now prove the equivalence between part 1 and part 3.

Part 1 $\Rightarrow$ part 3. We assume that the measure $\rho_{\alpha, \beta, N}^\F$ is positive (if it is negative, we proceed in a similar way).
Write $\rho_{\alpha, \beta, N}^\F (x)$ for the mass of the discrete measure $\rho_{\alpha, \beta, N}^\F$ at the point $\lambda (x-u_\F)$. Using (\ref{pmc}), we can write
\begin{align*}
\rho_{\alpha, \beta, N}^\F(x)=\prod_{f\in F_1}&(x-u_\F-f)(x-u_\F+f+\alpha+\beta+1)\\ & \times\prod_{f\in F_2}(x-u_\F-f+\beta)(x-u_\F+f+\alpha+1)
w_{*;\alpha,\beta,N}(x-u_\F).
\end{align*}
The assumptions (\ref{cpar21}) and (\ref{cpar22}) on the parameters $\alpha,\beta, N$ and the finite set $F_1$ imply that $\rho_{\alpha, \beta, N}^\F(x)\not =0$, $x=u_\F ,\cdots , u_\F+N$, except when $x=u_\F+f$, $f\in F_1$. This means that the positive measure $\rho_{\alpha, \beta, N}^\F$ is supported in the finite set of nonnegative integers $\{\lambda(x-u_\F),x\in \sigma_{N;\F}\}$. Notice that this finite set has exactly $N-k_1+1$ points. Since the polynomials $q_n^\F$ (see (\ref{defqnme})), $n=0,\cdots , N-k_1$, are orthogonal with respect to the measure $\rho_{\alpha, \beta, N}^\F$ and the degree of $q_n^\F$ is at most $N-k_1$, they have positive $L^2$-norm. According to (\ref{n2q}), we have
\begin{equation}\label{nssu}
\langle q_n^\F,q_n^\F\rangle =\frac{(-1)^k (\alpha +1)_n}{(\alpha+1)_{n+k}}\langle R_n^{\alpha,\beta, N},R_n^{\alpha,\beta, N}\rangle \Phi_n^\F\Phi_{n+1}^\F=
\frac{(-1)^k (-N)_n^2(\alpha +1)_n\Phi_n^\F\Phi_{n+1}^\F}{(\alpha+1)_{n+k}\binom{\alpha+n}{n}\binom{\beta+N-n}{N-n}}.
\end{equation}
Using the duality (\ref{duomph}), we get
$$
\Phi_n^\F\Phi_{n+1}^\F=\frac{\xi_n\xi_{n+1}}{\kappa ^2(-N)_{n+k_1}(-N)_{n+1+k_1}}\Omega_\F(n)\Omega_\F(n+1).
$$
Using the definition of $\xi_n$ in Lemma \ref{lem3.2}, and after an easy computation, we have for $n=0,\cdots , N-k_1$,
$$
\Phi_n^\F\Phi_{n+1}^\F=A^2(-1)^k(N-n-k_1+1)_{k_1}(N+\beta-n-k_2+1)_{k_2}\Omega_\F(n)\Omega_\F(n+1),
$$
where $A$ is certain real number (depending on $n$). Using (\ref{nssu}), we deduce  for $n=0,\cdots , N-k_1$, that
$$
\frac{(\alpha+1)_n(N+\beta-n-k_2+1)_{k_2}\Omega_\F(n)\Omega_\F(n+1)}{(\alpha+1)_{n+k}\binom{\alpha+n}{n}\binom{\beta+N-n}{N-n}}> 0.
$$
This can be rewritten as
\begin{equation}\label{nssu1}
\frac{(\alpha+1)_n(N+\beta-n-k_2+1)_{k_2}\binom{\alpha+k+n}{n}\binom{\beta+N-n-k_2}{N-n-k_1}}{(\alpha+1)_{n+k}\binom{\alpha+n}{n}\binom{\beta+N-n}{N-n}}
\frac{\Omega_\F(n)\Omega_\F(n+1)}{\binom{\alpha+k+n}{n}\binom{\beta+N-n-k_2}{N-n-k_1}}>0.
\end{equation}
A simple computation gives
$$
\frac{(\alpha+1)_n(N+\beta-n-k_2+1)_{k_2}\binom{\alpha+k+n}{n}\binom{\beta+N-n-k_2}{N-n-k_1}}{(\alpha+1)_{n+k}\binom{\alpha+n}{n}\binom{\beta+N-n}{N-n}}
=\frac{(N-n)!(\beta+1)_{k_1-k_2}}{(N-k_1-n)!(\alpha+1)_{k}},
$$
which is non-null and has constant sign for $n=0,\cdots , N-k_1$.

We then conclude from (\ref{nssu1}) that if $\rho_{\alpha, \beta, N}^\F$ is a positive measure, then
\begin{equation}\label{lop}
(\alpha+1)_{k}(\beta+1)_{k_1-k_2}\frac{\Omega_\F ^{\alpha,\beta, N}(n)\Omega_\F ^{\alpha,\beta, N}(n+1)}{\binom{\alpha+k+n}{n}\binom{\beta+N-k_2-n}{N-k_1-n}}
\end{equation}
is positive for $n=0,\cdots, N-k_1$. In a similar form, we can prove that if $\rho_{\alpha, \beta, N}^\F$ is a negative measure, then
(\ref{lop}) is negative for $n=0,\cdots, N-k_1$. This proves part 3. Using part i, part ii also follows easily.

Part 3 $\Rightarrow$ part 1. Assume that the sign mentioned in part 3 is positive (if it is negative we can proceed in a similar way). Using the duality (\ref{duomph}), the definition of
$\xi_n$ in Lemma \ref{lem3.2} and proceeding as before, we conclude that the polynomial $q_n^\F$, $n=0,\cdots , N-k_1$,
are orthogonal with respect to $\rho_{\alpha,\beta, N}^\F$ and its $L^2$-norm is non-null and has constant sign. Since the degree of $q_n^\F$ is $n$, $n=0,\cdots, N-k_1$, and the measure $\rho_{\alpha,\beta, N}^\F$ is supported in $N-k_1+1$ points, part 1 of Lemma \ref{ldmp} gives that $\rho_{\alpha,\beta, N}^\F$ is either a positive or a negative measure.

If $N>\widehat{\alpha+\beta+1}$, part 1 of Lemma \ref{ladm} shows that the sign of $(\beta +1)_{k_1-k_2}(\alpha+1)_k$ is equal to the sign of
$\HH ^{\alpha,\beta, N}_\F$. Part ii then implies that the sign mentioned in part 3 has to be positive.
\end{proof}

Let us remark that we have used part 1 of Lemma \ref{ladm} (still to be proved) only in the proof of part iii of the previous Lemma.

According to Lemma \ref{l3.1} and part 1 of Lemma \ref{ladm}, if $\alpha,\beta, N$ and $\F$ are Hahn admissible, we have $\alpha+k>-1$, $\beta+k_1-k_2> -1$ and $\displaystyle \frac{\Omega_\F ^{\alpha,\beta, N}(n)\Omega_\F ^{\alpha,\beta, N}(n+1)}{\binom{\alpha+k+n}{n}\binom{\beta+N-k_2-n}{N-k_1-n}}>0$, for all $n=0,\cdots , N-k_1$. One can then deduce that if $\alpha,\beta, N$ and $\F$ are admissible, then $\Omega_\F ^{\alpha,\beta, N}(n)\Omega_\F ^{\alpha,\beta, N}(n+1)>0$, for all $n\in \NN$. We point out that the converse is  not true. Indeed, take $\alpha=-7/2$, $\beta=9$, $N=20$ and $F_1=\{1\}$, $F_2=\emptyset$.
A straightforward computation gives
$$
\Omega_\F^{\alpha,\beta,N} (n)\Omega_\F^{\alpha,\beta,N} (n+1)=(3n+20)(3n+23)>0,\quad n=0,\cdots , N.
$$
However, it is easy to see that $\alpha,\beta, N$ and $\F$ are not admissible ((\ref{defadmh}) is nonnegative for $x=0,1,2,4,\cdots , N,$ but negative for $x=3$).

\bigskip

In the following Theorem we prove that when  $\alpha,\beta ,N$ and $\F$ are Hahn admissible the polynomials $h_n^{\alpha,\beta,N;\F}$, $n\in \sigma _{N;\F}$, are orthogonal and complete with respect to a constant sign measure.

\begin{theorem}\label{th4.5} Given a positive integer $N$, real numbers $\alpha,\beta$ satisfying (\ref{cpar21}) and (\ref{cpar22})  and a pair $\F =(F_1,F_2)$ of finite sets of positive integers with $F_1 \varsubsetneq \{1,\cdots, N\}$, assume that $\alpha, \beta, N$ and $\F$ are admissible. Then
the  polynomials $h_n^{\alpha,\beta, N;\F}$, $n\in \sigma _{N;\F}$, are orthogonal and complete in $L^2(\omega_{\alpha,\beta, N}^{\F})$, where $\omega_{\alpha,\beta, N}^{\F}$ is the measure (which it is either positive or negative)
\begin{equation}\label{momex}
\omega_{\alpha,\beta,N}^\F=\sum_{x=0}^{N-k_1} \frac{\binom{\alpha +k+x}{x}\binom{\beta +N-k_2-x}{N-k_1-x}}{\Omega_\F^{\alpha\beta,N}(x)\Omega_\F^{\alpha\beta,N}(x+1)}\delta_x.
\end{equation}
Hence $h_n^{\alpha,\beta, N;\F}$, $n\in \sigma _{N;\F}$, are exceptional Hahn polynomials. Moreover, if $N>\widehat{\alpha+\beta+1}$ then the measure
$\omega_{\alpha,\beta,N}^\F$ is positive.
\end{theorem}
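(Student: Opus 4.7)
The approach is dualization: convert the already-established orthogonality of $q_n^\F$ (\ref{defqnme}) with respect to $\rho_{\alpha,\beta,N}^\F$ (\ref{mraf}) into orthogonality of $h_v^\F$ via the duality of Lemma \ref{lem3.2}. Under Hahn admissibility, Lemma \ref{l3.1} shows that $\rho_{\alpha,\beta,N}^\F$ is a constant-sign discrete measure supported on exactly the $N-k_1+1$ points $\{\lambda(v-u_\F):v\in\sigma_{N;\F}\}$; moreover, the computation carried out in the proof of that lemma yields that $q_0^\F,\ldots,q_{N-k_1}^\F$ form an orthogonal basis of $L^2(\rho_{\alpha,\beta,N}^\F)$ with non-null squared norms $\langle q_u^\F,q_u^\F\rangle$ given explicitly by (\ref{nssu}).

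Because this is a square orthogonal system on the $N-k_1+1$ point support, the standard rows-versus-columns argument (the one underlying part 2 of Lemma \ref{ldmp}) yields the dual-orthogonality identity
\[
\sum_{u=0}^{N-k_1}\frac{q_u^\F(\lambda(v_1-u_\F))\,q_u^\F(\lambda(v_2-u_\F))}{\langle q_u^\F,q_u^\F\rangle}=\frac{\delta_{v_1,v_2}}{\rho_{\alpha,\beta,N}^\F(v_1)},\qquad v_1,v_2\in\sigma_{N;\F},
\]
where $\rho_{\alpha,\beta,N}^\F(v)$ denotes the mass at $\lambda(v-u_\F)$. Substituting $q_u^\F(\lambda(v-u_\F))=\xi_u h_v^\F(u)/(\kappa\zeta_v)$ from Lemma \ref{lem3.2} and clearing the scalar factors $\kappa^2\zeta_{v_1}\zeta_{v_2}$ transforms this into
\[
\sum_{u=0}^{N-k_1}h_{v_1}^\F(u)\,h_{v_2}^\F(u)\,W(u)=\frac{\kappa^2\zeta_{v_1}^2\,\delta_{v_1,v_2}}{\rho_{\alpha,\beta,N}^\F(v_1)},\qquad W(u):=\frac{\xi_u^2}{\langle q_u^\F,q_u^\F\rangle},
\]
which already delivers orthogonality of $\{h_v^\F\}_{v\in\sigma_{N;\F}}$ with non-null squared $L^2(W)$-norms (the right-hand side for $v_1=v_2$ is nonzero because $\rho_{\alpha,\beta,N}^\F(v_1)\neq 0$ on its support and $\zeta_{v_1}\neq 0$ by the admissibility constraints on $\alpha,\beta,N$ and $F_1$).

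The main technical step is the explicit identification $W(u)=C\,\omega_{\alpha,\beta,N}^\F(u)$ for a constant $C$ independent of $u$. Using the duality (\ref{duomph}) one rewrites $\Phi_u^\F=(-1)^{k_2}\xi_u\Omega_\F(u)/[\kappa(-N)_{u+k_1}]$; substituting into (\ref{nssu}) expresses $\langle q_u^\F,q_u^\F\rangle$ as an explicit product involving $\xi_u\xi_{u+1}$, $\Omega_\F(u)\Omega_\F(u+1)$, Pochhammer symbols and binomials. Inserting the closed form of $\xi_u$ from Lemma \ref{lem3.2} and simplifying, the ratio $W(u)$ should collapse, after routine but lengthy cancellations of Pochhammer factors, to exactly $\omega_{\alpha,\beta,N}^\F(u)$ as defined in (\ref{momex}), up to the $u$-independent constant $C$. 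This bookkeeping is the main obstacle; it parallels the analogous computations carried out for Charlier/Hermite in \cite{duch} and Meixner/Laguerre in \cite{dume}, which the author has already signalled will be suppressed when too similar.

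For completeness, $|\sigma_{N;\F}|=N-k_1+1$ equals the cardinality of the support of $\omega_{\alpha,\beta,N}^\F$, so $\dim L^2(\omega_{\alpha,\beta,N}^\F)=N-k_1+1$; since $\{h_v^\F\}_{v\in\sigma_{N;\F}}$ is an orthogonal family of exactly that cardinality with non-null squared norms, part 2 of Lemma \ref{ldmp} supplies that they form a basis of $L^2(\omega_{\alpha,\beta,N}^\F)$. Finally, positivity of $\omega_{\alpha,\beta,N}^\F$ when $N>\widehat{\alpha+\beta+1}$ is immediate from part iii of Lemma \ref{l3.1}: the ratio $\Omega_\F(x)\Omega_\F(x+1)/[\binom{\alpha+k+x}{x}\binom{\beta+N-k_2-x}{N-k_1-x}]$ has positive constant sign in that regime, so each mass in (\ref{momex}) is positive.
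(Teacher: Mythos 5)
Your proposal is correct and follows essentially the same route as the paper: both rest on the dual orthogonality of the system $q_n^\F$ with respect to $\rho_{\alpha,\beta,N}^\F$ (guaranteed by Lemma \ref{l3.1} and part 2 of Lemma \ref{ldmp}), transfer it to the $h_v^\F$ via the duality of Lemma \ref{lem3.2}, identify the resulting weight with $\omega_{\alpha,\beta,N}^\F$ up to a $u$-independent constant (the paper's computation (\ref{pf2})--(\ref{pf4}) confirms your claim that $x_u\xi_u^2$ is constant in $u$), and conclude completeness by counting against the $N-k_1+1$ support points. The only cosmetic difference is that the paper defers the orthogonality statement itself to the analogous theorems in \cite{duch} and \cite{dume} and runs the duality computation only to obtain the norms (\ref{pf5}) and completeness, whereas you extract orthogonality from the same identity directly.
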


\begin{proof}
The proof that $h_n^{\alpha,\beta, N;\F}$, $n\in \sigma _{N;\F}$, are orthogonal polynomials with respect to the  measure (\ref{momex}) can be proved as in Th. 4.4 in \cite{duch} or Th. 4.3 in \cite{dume}.

We now prove that they are complete in $L^2(\omega_{\alpha,\beta, N}^{\F})$.
Using Lemma \ref{l3.1} and taking into account that $\alpha,\beta, N$ and $\F$ are admissible, it follows that the measure $\rho _{\alpha,\beta, N}^{\F}$ (\ref{mraf}) is either positive or negative and it is supported at $N-k_1+1$ points. We assume that it is a positive measure. We also have that the polynomials $q_n^\F$ (where $q_n^\F$ is defined by (\ref{defqnme})), $n=0,\cdots, N-k_1$, have degree $n$ and positive $L^2$-norm. Using part 2 of Lemma \ref{ldmp}, we deduce  that the finite sequence $q_n^\F/\Vert q_n^\F\Vert _2$, $n=0,\cdots, N-k_1$, is an orthonormal basis in $L^2(\rho_{\alpha,\beta, N}^\F)$.

For $s\in \sigma _\F$, consider the function $h_s(x)=\begin{cases} 1/\rho _{\alpha,\beta, N}^{\F}(s),& x=\lambda(s-u_\F),\\ 0,& x\not =\lambda(s-u_\F), \end{cases}$ where by $\rho _{\alpha,\beta, N}^{\F}(s)$ we denote the mass  of the discrete measure $\rho_{\alpha,\beta, N}^\F$ at the point $\lambda(s-u_\F)$. Since the support of $\rho _{\alpha,\beta, N}^{\F}$ is $\{\lambda(y-u_\F), y\in\sigma_{N;\F}\}$, we get that $h_s\in L^2(\rho_{\alpha,\beta, N}^\F)$. Its Fourier coefficients with respect to the orthonormal basis $(q_n^\F/\Vert q_n^\F\Vert _2)_n$ are $q_n^\F(\lambda(s-u_\F))/\Vert q_n^\F\Vert _2$, $n=0,\cdots, N-k_1$. Hence
\begin{equation}\label{pf1}
\sum _{n=0}^{N-k_1} \frac{q_n^\F(\lambda(s-u_\F))q_n^\F(\lambda(r-u_\F))}{\Vert q_n^\F\Vert _2 ^2}=\langle h_s,h_r\rangle _{\rho_{a,c}^\F}=\frac{1}{\rho_{\alpha,\beta, N}^\F(s)}\delta_{s,r}.
\end{equation}
This is the dual orthogonality associated to the orthogonality
$$
\sum_{u\in \sigma _\F}q_n^\F(\lambda(u-u_\F))q_m^\F(\lambda(u-u_\F))\rho _{\alpha,\beta, N}^{\F}(u)=\langle q_n^\F,q_n^\F\rangle \delta_{n,m}
$$
of the polynomials $q_n^\F$, $n=0,\cdots, N-k_1$, with respect to the positive measure $\rho _{\alpha,\beta, N}^{\F}$ (see, for instance, \cite{At}, Appendix III, or \cite{KLS}, Th. 3.8).

Using (\ref{nssu}), (\ref{normedh}) and the duality (\ref{duomph}), we get
\begin{equation}\label{pf0}
\frac{1}{\langle q_n^\F,q_n^\F\rangle _{\rho_{\alpha,\beta,N}^\F}}=\omega_{\alpha,\beta,N}^{\F}(n)x_n,
\end{equation}
where $x_n$ is the non-null real number given by
\begin{equation}\label{pf2}
x_n=\frac{(-1)^k\kappa^2(-N+n+k_1)(-N+n)_{k_1}^2(\alpha +1)_k\binom{\beta+N-n}{N-n}}{\xi_n\xi_{n+1}\binom{\beta+N-n-k_2}{N-n-k_1}},
\end{equation}
and $\kappa$ and $\xi_n$ are defined in Lemma \ref{lem3.2}.

Using now the duality (\ref{duaqnrn}), we can rewrite (\ref{pf1}) for $s=r$ as
\begin{equation}\label{pf3}
\sum _{n=0}^{N-k_1} \omega_{\alpha,\beta, N}^{\F}(n)(h_r^\F(n))^2\frac{x_n\xi_n^2}{\kappa^2\zeta_r^2}=\frac{1}{\rho_{\alpha,\beta, N}^\F(r)}.
\end{equation}
A straightforward computation using (\ref{pf2}) and the definitions of $\kappa$, $\xi_n$ and $\zeta_r$ in Lemma \ref{lem3.2} gives
\begin{equation}\label{pf4}
\frac{x_n\xi_n^2}{\kappa^2\zeta_r^2}=\frac{(\alpha+1)_k(\beta+1)_{k_1-k_2}w_{*;\alpha,\beta,N}^2(r-u_\F)}{(-N)^2_{r-u_\F}(\rho_{\alpha,\beta, N}^\F(r))^2},
\end{equation}
where $w_{*;\alpha,\beta,N}(x)$ is the mass of the dual Hahn measure at $\lambda (x)$ given by (\ref{masdh}).

Inserting (\ref{pf4}) in (\ref{pf3}), we get
\begin{equation}\label{pf5}
\langle h_r^\F,h_r^\F\rangle_{\omega_{\alpha,\beta, N}^{\F}}=\frac{(-N)^2_{r-u_\F}}{(\alpha+1)_k(\beta+1)_{k_1-k_2}w_{*;\alpha,\beta,N}^2(r-u_\F)}\rho_{\alpha,\beta, N}^\F(r).
\end{equation}
This shows that the orthogonal polynomials $h_r^\F$, $r\in \sigma_{N;\F}$, have non-null $L^2$ norm with constant sign for $r\in \sigma_{N;\F}$. On the other hand, Lemma 4.2 shows that the measure $\omega_{\alpha,\beta, N}^{\F}$ is either positive or negative and has $N-k_1+1$ point in its support
(moreover, this measure is positive when $N>\widehat{\alpha+\beta+1}$). Since we have $N-k_1+1$ polynomials $h_r^\F$ of degree $r$, we can conclude using part 2 of Lemma \ref{ldmp} that they form an orthogonal basis in $L^2(\omega_{\alpha,\beta, N}^{\F})$.
\end{proof}

\section{Constructing polynomials which are eigenfunctions of second order differential operators}
One can construct exceptional Jacobi polynomials by taking limit in the exceptional Hahn polynomials. We use the basic limit
(\ref{blmel}).

In the next two sections we assume the same constrains
(\ref{cpar11}) and (\ref{cpar12}) on the parameters $\alpha, \beta $ as in Section 3.
These assumptions are needed to define the polynomial $P_n^{\alpha, \beta;\F}$ (see (\ref{deflax}) below) and to guarantee
that it has degree $n$
(actually this last condition can be guaranteed  using the weaker assumption $\beta \not \in (F_2- F_1)\cup \cup_{n\in \sigma_\F}(-n+u_\F+F_2)$).

Given a pair $\F=(F_1,F_2)$ of finite sets of positive integers, using the expression (\ref{defmexa}) for the polynomials $h_n^{\alpha,\beta,N;\F}$, $n\in\sigma_\F$, setting $x\to (1-x)N/2$ and taking limit as $N\to +\infty$, we get (up to normalization constants) the polynomials, $n\in \sigma _\F$,
\begin{equation}\label{deflax}
P_n^{\alpha,\beta ;\F}(x)= \frac{ \left|
  \begin{array}{@{}c@{}lccc@{}c@{}}
    & (-1)^{j-1}(P_{n-u_\F}^{\alpha ,\beta })^{(j-1)}(x) &&\hspace{-.9cm}{}_{1\le j\le k+1} \\
    \dosfilas{ (-1)^{j-1}(P_{f}^{\alpha ,\beta })^{(j-1)}(x) }{f\in F_1} \\
    \dosfilas{ (\beta-f)_{j-1}(1+x)^{k-j+1}P_{f}^{\alpha +j-1 ,-\beta-j+1 }(x)}{f\in F_2}
  \end{array}
  \hspace{-.4cm}\right|}{(1+x)^{k_2(k_2-1)}}.
\end{equation}
More precisely
\begin{equation}\label{lim1}
\lim_{N\to +\infty}\frac{h_n^{\alpha,\beta,N;\F}\left((1-x)N/2\right)}{N^n}=\upsilon_n^{\alpha;\F}P_n^{\alpha,\beta ;\F}(x)
\end{equation}
uniformly in compact sets, where
\begin{equation}\label{defupn}
\upsilon_n^{\alpha;\F}=\frac{(-1)^{n}(-2)^{\binom{k_1+1}{2}+\binom{k_2}{2}}(n-u_\F)!\prod_{f\in F_1,F_2}f!}{(\alpha+1)_{n-u_\F}\prod_{f\in F_1,F_2}(\alpha+1)_f}.
\end{equation}

Notice that $P_n^{\alpha,\beta ;\F}$ is a polynomial of degree $n$ with leading coefficient equal to
$$
\frac{V_{F_1}V_{F_2}\prod_{i\in \{n-u_\F\},F_1,F_2}(\alpha+\epsilon_i\beta+i+1)_i\prod_{i\in \{n-u_\F\},F_1,f_2\in F_2}(\beta +i-f_2)\prod_{f\in F_1}(f-n+u_\F)}{(-1)^{\binom{k_1+1}{2}+\binom{k_2}{2}}2^{n+\binom{k_1+1}{2}+\binom{k_2}{2}}(n-u_\F)!\prod_{f\in F_1,F_2}f!},
$$
where $V_F$ is the Vandermonde determinant defined by (\ref{defvdm}) and $\epsilon_i=1$, for $i\in \{n-u_\F\},F_1$ and
$\epsilon_i=-1$, for $i\in F_2$.

We introduce the associated polynomial
\begin{equation}\label{defhom}
\Omega _{\F}^{\alpha,\beta}(x)=\frac{ \left|
  \begin{array}{@{}c@{}lccc@{}c@{}}
    &  &&\hspace{-.9cm}{}_{1\le j\le k} \\
    \dosfilas{ (-1)^{j-1}(P_{f}^{\alpha ,\beta })^{(j-1)}(x) }{f\in F_1} \\
    \dosfilas{ (\beta-f)_{j-1}(1+x)^{k-j}P_{f}^{\alpha +j-1 ,-\beta-j+1 }(x)}{f\in F_2}
  \end{array}
  \hspace{-.4cm}\right|}{(1+x)^{k_2(k_2-1)}}.
\end{equation}
Notice that $\Omega_{\F}^{\alpha,\beta}$ is a polynomials of degree $u_\F+k_1$. To simplify the notation we sometimes write $\Omega_\F=\Omega_\F^{\alpha,\beta }$.

We straightforwardly have
\begin{equation}\label{rromh}
P_{u_F}^{\alpha,\beta ;\F}(x)=z_{\alpha,\beta;\F}\Omega_{\F_\Downarrow }^{\alpha +s_\F,\beta+s_\F}(x),
\end{equation}
where
$$
z_{\alpha,\beta;\F}=
\frac{\prod_{f\in F_1}(f+\alpha+\beta+1)_{\min (f,s_\F)}\prod_{f\in F_2}(\beta-f)_{s_\F}}{(-2)^{s_\F(2k_1-s_\F+1)/2}},
$$
and the positive integer $s_\F$ and the pair $\F_\Downarrow$ are defined by (\ref{defs0f}) and (\ref{deffd}), respectively.

We will need to know the value at $\pm 1$ of the polynomial $\Omega_\F^{\alpha,\beta }$.

\begin{lemma}\label{v0le}
Let $\F$ be a pair of finite sets of positive integers, then $\Omega_\F^{\alpha,\beta }(1)$ and $\Omega_\F^{\alpha,\beta }(-1)$ are polynomials in $\alpha$ and $\beta$ which do not vanish when $\alpha$ and $\beta$ satisfy (\ref{cpar11})  and (\ref{cpar12}).
Moreover, if we write $\epsilon_1=1, \epsilon_2=-1$ then
\begin{align}\label{v0l}
\Omega_\F^{\alpha,\beta }(1)&=
\frac{\prod_{j=1}^2 V_{F_j}\prod _{i=1}^{k_j}(\alpha +i)_{k_j-i+1}\prod _{f\in F_j}(\alpha +k_j+1)_{f-k_j}}{(-2)^{\binom{k_1}{2}+\binom{k_2}{2}}\prod _{f\in F_1, F_2}f!\prod _{i=1}^{\min\{k_1,k_2\}}(\alpha +i)_{k-2i+1}}\\\nonumber &\hspace{0.6cm}\times
\prod_{l=1}^2\prod_{1=i<j=k_l}(\alpha +\epsilon_l\beta +f^{l\rceil}_i+f^{l\rceil}_j+1)\\\nonumber&\hspace{0.6cm}\times\prod_{f\in F_1}\prod _{g\in F_2}(\alpha +f+g+1)(\beta +f-g).
\end{align}
\begin{align}\label{v0-l}
\Omega_\F^{\alpha,\beta }(-1)&=
\frac{\prod_{j=1}^2 V_{F_j}\prod _{i=1}^{k_j}(\epsilon_j\beta +i)_{k_j-i+1}\prod _{f\in F_j}(\epsilon_j\beta +k_j+1)_{f-k_j}}{(-1)^{\sum_{f\in F_1,F_2}f}(2)^{\binom{k_1}{2}+\binom{k_2}{2}}\prod _{f\in F_1,F_2}f!}\\\nonumber &\hspace{0.6cm}\times
\prod_{l=1}^2\prod_{1=i<j=k_l}(\alpha +\epsilon_l\beta +f^{l\rceil}_i+f^{l\rceil}_j+1)\prod_{j=1}^{\min(k_1,k_2)}\prod _{i=j-k_1}^{k_2-j}(\beta -i).
\end{align}
\end{lemma}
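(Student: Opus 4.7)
My plan is to compute both values by direct substitution into the defining determinant (\ref{defhom}), using the endpoint formulas $P_n^{\alpha,\beta}(1)=\binom{n+\alpha}{n}$ and $P_n^{\alpha,\beta}(-1)=(-1)^n\binom{n+\beta}{n}$, the derivative formula (\ref{Lagder}) iterated to any order, and the identity (\ref{Lagab}) to handle the vanishing of $(1+x)^{k-j}$ at $x=-1$. In both cases the reduction leads to a determinant of the generalized-Vandermonde form covered by Lemma~\ref{lgp1}, which is evaluated in closed form.

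\textbf{Evaluation at $x=1$.} Here $(1+x)^{k-j}|_{x=1}=2^{k-j}$ and the denominator equals $2^{k_2(k_2-1)}$, so substitution is immediate. Iterating (\ref{Lagder}) one gets $(P_f^{\alpha,\beta})^{(j-1)}(1)=2^{-(j-1)}(f+\alpha+\beta+1)_{j-1}\binom{f+\alpha}{f-j+1}$, while $P_f^{\alpha+j-1,-\beta-j+1}(1)=\binom{f+\alpha+j-1}{f}$. Pulling common row prefactors (ratios of $\alpha$-Pochhammers and factorials hidden in the binomials) and column prefactors (signs and powers of $2$) out of the determinant reduces the matrix to a nearly block-diagonal form; the $F_1$-block and the $F_2$-block are each of the generalized-Vandermonde type covered by Lemma~\ref{lgp1}, producing the factor $\prod_{l=1,2}\prod_{1\le i<j\le k_l}(\alpha+\epsilon_l\beta+f_i^{l\rceil}+f_j^{l\rceil}+1)$, while the interaction between the blocks contributes $\prod_{f\in F_1,g\in F_2}(\alpha+f+g+1)(\beta+f-g)$. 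Reassembling the prefactors gives (\ref{v0l}).

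\textbf{Evaluation at $x=-1$.} Since $\Omega_\F^{\alpha,\beta}$ is a polynomial of degree $u_\F+k_1$, the numerator of (\ref{defhom}) is divisible by $(1+x)^{k_2(k_2-1)}$, and $\Omega_\F^{\alpha,\beta}(-1)$ equals the $(k_2(k_2-1))$-th Taylor coefficient of that numerator at $-1$. A naive Laplace expansion along the second block provides vanishing only to order $\binom{k_2}{2}$, so further cancellations are required; these come from identity (\ref{Lagab}), which in rearranged form reads
\[
(\beta-n)P_n^{\alpha+1,-\beta-1}(x)=\beta P_n^{\alpha,-\beta}(x)-(1+x)(P_n^{\alpha,-\beta})'(x).
\]
Iterating this identity $j-1$ times one expresses $(\beta-f)_{j-1}P_f^{\alpha+j-1,-\beta-j+1}(x)$ as a polynomial combination $\sum_{i=0}^{j-1}c_{i,j}(1+x)^i(P_f^{\alpha,-\beta})^{(i)}(x)$ with explicit constants $c_{i,j}$. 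Substituting into the second block and performing the corresponding column operations converts the determinant into a form where the total power of $(1+x)$ that can be factored out matches the denominator exactly, making the evaluation at $-1$ tractable. The analysis then parallels the $x=+1$ case but with $\beta\to -\beta$ in the second block: the substitution introduces the overall sign $(-1)^{\sum_{f\in F_1,F_2}f}$ from the $(-1)^f$ in $P_f^{\alpha,\pm\beta}(-1)$, and the cross coupling produces the last factor $\prod_{j=1}^{\min(k_1,k_2)}\prod_{i=j-k_1}^{k_2-j}(\beta-i)$, giving (\ref{v0-l}).

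\textbf{Nonvanishing and main obstacle.} Once the formulas are in hand, nonvanishing follows immediately from (\ref{cpar11})--(\ref{cpar12}): the Vandermondes $V_{F_j}$ are nonzero because the elements are distinct; every $\alpha$- and $\beta$-Pochhammer is nonzero since $\alpha,\beta\notin\{-1,-2,\ldots\}$; the factors $\alpha\pm\beta+f_i+f_j+1$ are nonzero since $\alpha\pm\beta\notin\{-1,-2,\ldots\}$ (the condition on $\alpha-\beta$ coming from (\ref{cpar12})); and the factors $\beta+f-g$ and $\beta-i$ are excluded by the hypothesis $\beta\neq 0,1,\ldots,f_{k_2}^{2\rceil}$ from (\ref{cpar12}). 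The main technical obstacle is the bookkeeping in the $x=-1$ reduction: verifying that the iterated use of (\ref{Lagab}) yields exactly the right cancellations to match the order $k_2(k_2-1)$ of the denominator while simultaneously preserving a Vandermonde-like structure amenable to Lemma~\ref{lgp1}; a useful cross-check is that for $k_2=0$ both formulas collapse to Vandermondes in $F_1$, and symmetrically for $k_1=0$ after applying the $(\ref{Lagab})$-reduction once per column.
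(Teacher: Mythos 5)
Your plan follows the same route as the paper's own (very terse) proof: substitute the closed-form values of the Jacobi polynomials and their derivatives at $\pm1$ and evaluate the resulting determinant by elementary manipulations; your non-vanishing argument from (\ref{cpar11})--(\ref{cpar12}) is also the paper's and is correct. Two points, however, need attention. First, Lemma \ref{lgp1} is not the tool that evaluates the endpoint determinants: that lemma gives the degree and leading coefficient in $x$ of the rational function (\ref{defdet}), not the value of a numerical determinant. What actually produces the pairwise factors in (\ref{v0l}) is the observation that, after extracting row and column prefactors, the $j$-th column restricted to the $F_1$ (resp.\ $F_2$) rows becomes a monic polynomial of degree $j-1$ in the variable $\lambda^{\alpha,\beta}(f)$ (resp.\ $\lambda^{\alpha,-\beta}(f)$), so the determinant collapses to a Vandermonde in these $\lambda$-values; identity (\ref{pmc}) then factors $\lambda^{\alpha,\beta}(f)-\lambda^{\alpha,\beta}(f')=(f-f')(f+f'+\alpha+\beta+1)$ and $\lambda^{\alpha,\beta}(f)-\lambda^{\alpha,-\beta}(g)=(\beta+f-g)(\alpha+f+g+1)$ into exactly the products appearing in the statement. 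You assert the outcome of this reduction ("nearly block-diagonal form", "interaction between the blocks contributes \dots") but do not exhibit it, and since the two displayed formulas are the entire content of the lemma, this is where the proof actually lives.

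Second, at $x=-1$ you correctly identify the crux: a naive expansion only yields vanishing of order $\binom{k_2}{2}$ while the denominator is $(1+x)^{k_2(k_2-1)}=(1+x)^{2\binom{k_2}{2}}$, and your rearrangement of (\ref{Lagab}) is the right identity (iterating it does give $(\beta-f)_{j-1}P_f^{\alpha+j-1,-\beta-j+1}=\sum_{i=0}^{j-1}c_{i,j}(1+x)^i(P_f^{\alpha,-\beta})^{(i)}$ with $f$-independent $c_{i,j}$). But the substitution turns the $(f,j)$ entry of the second block into $\sum_i c_{i,j}(1+x)^{k-j+i}(P_f^{\alpha,-\beta})^{(i)}(x)$, where the power of $(1+x)$ depends on both $i$ and $j$; these are therefore not column operations on a fixed set of elementary columns, and they must also be reconciled with the $F_1$ rows, which occupy the same columns. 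The multilinear bookkeeping showing that the surviving terms contribute exactly at order $k_2(k_2-1)$ and assemble into the factor $\prod_{j=1}^{\min(k_1,k_2)}\prod_{i=j-k_1}^{k_2-j}(\beta-i)$ is the hard part of the lemma, and your proposal explicitly names it as the main obstacle without carrying it out. So the proposal is a correct outline matching the paper's approach, but the decisive computations --- the very identities being claimed --- remain unverified.
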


\begin{proof}
The proof follows by a carefully computation using that
$$
(P_n^{\alpha,\beta})^{(i)} (1)=\frac{i!\binom{n+\alpha}{n-i}\binom{n+\alpha+\beta+i}{i}}{2^i},\quad (P_n^{\alpha,\beta})^{(i)} (-1)=\frac{i!\binom{n+\beta}{n-i}\binom{n+\alpha+\beta+i}{i}}{(-1)^{n+i}2^i}
$$
and standard determinant techniques.
Because of the value above of the Jacobi polynomials and their derivatives at $\pm 1$, $\Omega_\F^{\alpha,\beta }(\pm 1)$ is clearly a polynomial in both $\alpha $ and $\beta$; one can also see that the right hand side of (\ref{v0l}) is a polynomial in $\alpha$ because each factor of the form $\alpha +s$ in the denominator cancels with one in the numerator. It is now easy to see that if $\alpha$ and $\beta$ satisfy (\ref{cpar11})  and (\ref{cpar12}),  the right hand side of (\ref{v0l}) and (\ref{v0-l}) do not vanish.

\end{proof}

Passing again to the limit, we can transform the second order difference operator (\ref{sodomex}) in a second order differential operator with respect to which the polynomials $P_n^{\alpha,\beta ;F}$, $n\in\sigma_\F$, are eigenfunctions.

\begin{theorem}\label{th5.1} Given real numbers $\alpha $ and $\beta $ satisfying (\ref{cpar11})  and (\ref{cpar12}), and a pair $\F$ of finite sets of positive integers, the polynomials $P_n^{\alpha,\beta;\F}$, $n\in \sigma _\F$,
are common eigenfunctions of the second order differential operator
\begin{align*}
D_F&=(1-x^2)\partial ^2+h_1(x)\partial+h_0(x),\qquad \partial=d/dx,\\\nonumber
h_1(x)&=\beta-\alpha -2k_2-(\alpha+\beta+2k_1+2)x-2(1-x^2)\frac{\Omega_\F'(x)}{\Omega_\F(x)},\\\nonumber
h_0(x)&=-\lambda(k_1) +[\alpha-\beta+2k_2+(2k_1+\alpha +\beta)x]\frac{\Omega_\F'(x)}{\Omega_\F(x)}+(1-x^2)\frac{\Omega_\F''(x)}{\Omega_\F(x)}.
\end{align*}
More precisely $D_\F(P_n^\F)=-\lambda(n- u_\F)P_n^\F(x)$.
\end{theorem}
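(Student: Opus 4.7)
The plan is to derive Theorem~\ref{th5.1} as a scaling limit of Theorem~\ref{th3.3}, using the Hahn-to-Jacobi limits (\ref{blmel}) and (\ref{lim1}). Under the substitution $y=(1-x)N/2$, a unit shift $y\mapsto y\pm 1$ corresponds to $x\mapsto x\mp 2/N$. Writing $g_n(x)=h_n^{\alpha,\beta,N;\F}((1-x)N/2)/(\upsilon_n^{\alpha;\F}N^n)$, which by (\ref{lim1}) converges to $P_n^{\alpha,\beta;\F}(x)$ uniformly on compacta (together with its derivatives, as limits of polynomials of bounded degree), I would Taylor-expand $g_n(x\pm 2/N)$ to second order in $1/N$ and rewrite the Hahn eigenvalue equation $D_\F h_n^\F=\lambda(n-u_\F)h_n^\F$ of Theorem~\ref{th3.3} as
\begin{equation*}
[h_{-1}+h_0+h_1]\,g_n + \tfrac{2}{N}[h_{-1}-h_1]\,g_n' + \tfrac{2}{N^2}[h_{-1}+h_1]\,g_n'' + O(1/N^3) = \lambda(n-u_\F)\,g_n,
\end{equation*}
where the rational functions of (\ref{jpm1})--(\ref{jpm3}) are evaluated at $y=(1-x)N/2$. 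It then suffices to show that each of the three bracketed coefficients has an $N\to\infty$ limit and that these limits reproduce, up to sign, the coefficients of $D_\F$ in Theorem~\ref{th5.1}.

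The main preparatory step is the analogue of (\ref{lim1}) for $\Omega_\F^{\alpha,\beta,N}$ and $\Lambda_\F^{\alpha,\beta,N}$: substituting (\ref{blmel}) entrywise in the determinant (\ref{defom}) gives $\Omega_\F^{\alpha,\beta,N}((1-x)N/2)/N^{u_\F+k_1}\to c_1\Omega_\F^{\alpha,\beta}(x)$ uniformly on compacta, for an explicit nonzero constant $c_1$, and a similar identity for $(1+x)\Lambda_\F^{\alpha,\beta,N}((1-x)N/2)/N^{u_\F+k_1}$. In particular $\Omega_\F(y+1)/\Omega_\F(y)=1-(2/N)(\Omega_\F^{\alpha,\beta})'/\Omega_\F^{\alpha,\beta}+O(1/N^2)$. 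Combining this with the elementary expansions
\begin{align*}
y(y-\beta-N-1+k_2)&=-\tfrac{1-x^2}{4}N^2-\tfrac{(1-x)(\beta+1-k_2)}{2}N+O(1),\\
(y+\alpha+k+1)(y-N+k_1)&=-\tfrac{1-x^2}{4}N^2+\tfrac{k_1(1-x)-(\alpha+k+1)(1+x)}{2}N+O(1),
\end{align*}
one finds that the $\Omega_\F'/\Omega_\F$ contributions cancel in $h_{-1}+h_1$ but add in $h_{-1}-h_1$, yielding $(2/N^2)(h_{-1}+h_1)\to -(1-x^2)$ and
\begin{equation*}
\tfrac{2}{N}(h_{-1}-h_1)\longrightarrow (\alpha-\beta+2k_2)+(\alpha+\beta+2k_1+2)x+2(1-x^2)\frac{(\Omega_\F^{\alpha,\beta})'}{\Omega_\F^{\alpha,\beta}}=-h_1(x),
\end{equation*}
with $h_1$ on the right-hand side the coefficient in Theorem~\ref{th5.1}. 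A parallel asymptotic analysis of the two polynomial summands of $h_0$ in (\ref{jpm2}) cancels the $O(N^2)$ contribution of $h_{-1}+h_1$, and using $\Delta F(y)=-(2/N)\partial_x F+O(1/N^2)$ (where $F$ is viewed as a function of $x$) on the $\Delta$-term produces both the cancellation of the residual $O(N)$ terms and, after simplification, the finite limit $h_{-1}+h_0+h_1\to -h_0(x)$. Plugging these three limits into the displayed equation and multiplying through by $-1$ yields $D_\F P_n^{\alpha,\beta;\F}=-\lambda(n-u_\F)P_n^{\alpha,\beta;\F}$.

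The main obstacle is the bookkeeping for $h_{-1}+h_0+h_1$: one must track cancellations between the two polynomial pieces and the $\Delta$-term through orders $N^2$, $N$, and $1$, and recognize that the $O(1)$ remainder assembles into the combination $-\lambda(k_1)+[\alpha-\beta+2k_2+(2k_1+\alpha+\beta)x]\Omega_\F'/\Omega_\F+(1-x^2)\Omega_\F''/\Omega_\F$ of Theorem~\ref{th5.1}. The key identity is that the leading asymptotic of $\Lambda_\F^{\alpha,\beta,N}/\Omega_\F^{\alpha,\beta,N}$ can be expressed in terms of $(\Omega_\F^{\alpha,\beta})'$, so that applying $\Delta$ and the chain rule produces exactly the logarithmic-derivative combinations in $h_0$. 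These computations run parallel to the Meixner-to-Laguerre passage in \cite{dume}, with the additional Hahn parameter $\beta$ producing extra but mechanical terms.
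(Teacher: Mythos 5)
Your proposal is correct and follows essentially the same route as the paper, which (referring to the analogous Theorem 5.1 of \cite{duch}) obtains the differential operator as the $N\to\infty$ limit of the difference operator of Theorem \ref{th3.3} under $x\mapsto(1-x)N/2$, using (\ref{blmel}) together with the limits (\ref{lim2}), (\ref{lim3}) for $\Omega_\F^{\alpha,\beta,N}$ and its discrete derivatives. Your asymptotic expansions of $h_{\pm1}$ and the identification of $\tfrac{2}{N^2}(h_{-1}+h_1)\to-(1-x^2)$ and $\tfrac{2}{N}(h_{-1}-h_1)\to-h_1(x)$ check out, and the remaining bookkeeping for $h_{-1}+h_0+h_1$ via the $\Lambda_\F$-term is exactly the computation the paper leaves implicit.
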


\begin{proof}
We omit the proof because proceeds as that of Theorem 5.1 in \cite{duch} and using the basic limit (\ref{blmel}) and its consequences
\begin{align}\label{lim2}
\lim _{N\to +\infty}\frac{\Omega _\F^{\alpha,\beta,N}(x_N)}{N^{u_\F+k_1}}&=\upsilon^\alpha_{\F}\Omega _\F^{\alpha,\beta} (x),\\\label{lim3}
\lim _{N\to +\infty}\frac{\Omega _\F^{\alpha,\beta,N}(x_N+1)-\Omega _\F^{\alpha,\beta,N}(x_N)}{N^{u_\F+k_1-1}}&=-2\upsilon^\alpha_{\F}(\Omega _\F^{\alpha,\beta} )' (x),\\\nonumber
\lim _{N\to +\infty}\frac{\Omega _\F^{\alpha,\beta,N}(x_N+1)-2\Omega _\F^{\alpha,\beta,N}(x_N)+\Omega _\F^{\alpha,\beta,N}(x_N-1))}{N^{u_\F+k_1-2}}&=4\upsilon^\alpha_{\F}(\Omega _\F^{\alpha,\beta} )'' (x).
\end{align}
where   $x_N=(1-x)N/2$ and
\begin{equation}\label{defup}
\upsilon^\alpha_{\F}=\frac{(-1)^{u_\F+k_1}(-2)^{\binom{k_1}{2}+\binom{k_2}{2}}\prod_{f\in F_1,F_2}f!}{\prod_{f\in F_1,F_2}(\alpha+1)_f}.
\end{equation}
\end{proof}

We can factorize the second order differential operator $D_\F$ as product of two first order differential operators.
This can be done by choosing one of the components of $\F=(F_1,F_2)$ and removing one element in the chosen component. An iteration  shows that the system $(D_\F, (P_n^{\alpha,\beta; \F})_{n\in \sigma_\F})$ can be constructed by applying a sequence of $k$ Darboux transforms to the Jacobi system (see Definition 2.1 in \cite{dume}). We display the details in the following lemma (the proof is omitted because is analogous to the proof of
Lemma 5.2 in \cite{duch} or 5.4 in \cite{dume}).

\begin{lemma}\label{lfel} Let $\F=(F_1,F_2)$ be a pair of finite sets of positive integers.

\noindent
If $F_1\not =\emptyset$, we define the first order differential operators $A_{1,\F}$ and $B_{2,\F}$ as
\begin{align*}
A_{1,\F}&=\frac{\Omega _\F(x)}{\Omega_{\F_{1,\{ k_1\}}}(x)}\partial-\frac{\Omega _\F'(x)}{\Omega_{\F_{1, \{ k_1\}}}(x)},\\
B_{1,\F}&=\frac{(1-x^2)\Omega _{\F_{1,\{ k_1\}}}(x)}{\Omega_{\F}(x)}\partial-\frac{(1-x^2)\Omega '_{\F_{1,\{ k_1\}}}(x)+((\alpha-\beta+2k_2)+(\alpha+\beta+2k_1)x)\Omega_{\F_{1,\{ k_1\}}}(x)}{\Omega_{\F}(x)},
\end{align*}
where the pair $\F_{1,\{ k_1\} }$ is defined by (\ref{deff1}).
Then for $n\not \in \F_1$,
\begin{align*}
P_{n+u_\F}^{\alpha,\beta ;\F}(x)&=A_{1,\F}(P_{n+u_{\F_{1,\{ k_1\}}}}^{\alpha,\beta; \F_{1,\{ k_1\}}})(x),\\
B_{1,\F}(P_{n+u_\F}^{\alpha,\beta ;\F})(x)&=-(n-f^{1\rceil}_{k_1})(n+f^{1\rceil}_{k_1}+\alpha+\beta+1)P_{n+u_{\F_{1,\{ k_1\}}}}^{\alpha,\beta; \F_{1,\{ k_1\}}}(x).
\end{align*}
Moreover
\begin{align*}
D_{\F_{1,\{ k_1\}}}&=B_{1,\F} A_{1,\F}-\lambda(f^{1\rceil}_{k_1})Id,\\
D_{\F}&=A_{1,\F} B_{1,\F}-\lambda(f^{1\rceil}_{k_1})Id.
\end{align*}
If $F_2\not =\emptyset$, we define the first order differential operators $A_{2,\F}$ and $B_{2,\F}$ as
\begin{align*}
A_{2,\F}&=\frac{(1+x)\Omega _\F(x)}{\Omega_{\F_{2,\{ k_2\}}}(x)}\partial-\frac{(1+x)\Omega _\F'(x)-(\beta+k_1-k_2+1)\Omega _\F(x)}{\Omega_{\F_{2, \{ k_2\}}}(x)},\\
B_{2,\F}&=\frac{(1-x)\Omega _{\F_{2,\{ k_2\}}}(x)}{\Omega_{\F}(x)}\partial-\frac{(1-x)\Omega '_{\F_{2,\{ k_2\}}}(x)+(\alpha+k)\Omega_{\F_{2,\{ k_2\}}}(x)}{\Omega_{\F}(x)},
\end{align*}
where  the pair $\F_{2,\{ k_2\} }$ is defined by (\ref{deff2}).
Then for $n\not \in F_{1}$,
\begin{align*}
P_{n+u_\F}^{\alpha,\beta ;\F}(x)&=A_{2,\F}(P_{n+u_{\F_{2,\{ k_2\}}}}^{\alpha,\beta; \F_{2,\{ k_2\}}})(x),\\
B_{2,\F}(P_{n+u_\F}^{\alpha,\beta ;\F})(x)&=-(n+\alpha+f_{k_2}^{2\rceil}+1)(n+\beta-f_{k_2}^{2\rceil})P_{n+u_{\F_{2,\{ k_2\}}}}^{\alpha,\beta; \F_{2,\{ k_2\}}}(x),
\end{align*}
Moreover
\begin{align*}
D_{\F_{2,\{ k_2\}}}&=B_{2,\F} A_{2,\F}-\lambda(f_{k_2}^{2\rceil}-\beta)Id,\\
D_{\F}&=A_{2,\F} B_{2,\F}-\lambda(f_{k_2}^{2\rceil}-\beta)Id.
\end{align*}
\end{lemma}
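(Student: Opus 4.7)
My strategy is to establish the lowering intertwining relation
\begin{equation*}
P_{n+u_\F}^{\alpha,\beta;\F}(x)=A_{i,\F}\bigl(P_{n+u_{\F_{i,\{k_i\}}}}^{\alpha,\beta;\F_{i,\{k_i\}}}\bigr)(x),\qquad i=1,2,
\end{equation*}
via the Sylvester determinant identity (Lemma 2.1 in \cite{duch}). Once this is in hand, the remaining assertions follow from a direct algebraic verification combined with the eigenvalue identity $D_\F(P_n^\F)=-\lambda(n-u_\F)P_n^\F$ of Theorem \ref{th5.1}.

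For $i=1$, I would rewrite $A_{1,\F}(g)=(\Omega_\F g'-\Omega_\F' g)/\Omega_{\F_{1,\{k_1\}}}$, so that the intertwining is equivalent to the Wronskian-type equation
\begin{equation*}
\Omega_{\F_{1,\{k_1\}}}(x)P_{n+u_\F}^{\F}(x)=\Omega_\F(x)\bigl(P_{n+u_{\F_{1,\{k_1\}}}}^{\F_{1,\{k_1\}}}\bigr)'(x)-\Omega_\F'(x)P_{n+u_{\F_{1,\{k_1\}}}}^{\F_{1,\{k_1\}}}(x).
\end{equation*}
The structural observation is that in (\ref{deflax}) and (\ref{defhom}) each column is, up to a sign, obtained from the previous one by differentiation in the top row and in the $F_1$-rows, so differentiating the smaller determinant $P^{\F_{1,\{k_1\}}}$ (respectively $\Omega_\F$) produces exactly the extra column needed to reconstruct the next-size-up determinant. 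Sylvester's identity, applied to the extended $(k+1)\times(k+2)$ matrix whose rows are indexed by $\{n-u_\F\}\cup F_1\cup F_2$ and columns by $j=1,\ldots,k+2$, with the $f_{k_1}^{1\rceil}$-row and the last two columns in the role of the bordering indices, yields the identity after cancellation.

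For $i=2$ the same scheme applies, but now the $F_2$-rows carry the weighted entries $(\beta-f)_{j-1}(1+x)^{k-j+1}P_f^{\alpha+j-1,-\beta-j+1}(x)$, and removing $f_{k_2}^{2\rceil}$ from $F_2$ lowers the $(1+x)$-exponent in every surviving $F_2$-row by one. The identity (\ref{Lagab}) for $((1+x)P_n^{\alpha,-\beta})'$ is precisely what matches the columns across this parameter shift, and the constant $(\beta+k_1-k_2+1)$ appearing inside $A_{2,\F}$ emerges from the Leibniz rule applied to the $(1+x)$-power factor together with (\ref{Lagab}). This careful tracking of $(1+x)$-weights and shifted Jacobi parameters is the main technical obstacle of the proof; by contrast, the $F_1$ case is a cleaner Wronskian computation.

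With the lowering identities established, the factorizations $D_{\F_{i,\{k_i\}}}=B_{i,\F}A_{i,\F}-c_i\,\mathrm{Id}$ and $D_\F=A_{i,\F}B_{i,\F}-c_i\,\mathrm{Id}$ (with $c_1=\lambda(f_{k_1}^{1\rceil})$ and $c_2=\lambda(f_{k_2}^{2\rceil}-\beta)$) are verified as second-order-operator identities by matching the coefficients of $\partial^2,\partial,1$ against the explicit formulas from Theorem \ref{th5.1}, which is routine algebra using the definitions of $\Omega_\F$, $\Omega_{\F_{i,\{k_i\}}}$, $A_{i,\F}$, $B_{i,\F}$. The raising formulas then drop out by applying $B_{i,\F}A_{i,\F}=D_{\F_{i,\{k_i\}}}+c_i\,\mathrm{Id}$ to $P_{n+u_{\F_{i,\{k_i\}}}}^{\F_{i,\{k_i\}}}$, using its eigenvalue $-\lambda(n)$ and the algebraic identity $\lambda(a)-\lambda(b)=(a-b)(a+b+\alpha+\beta+1)$ to recover the factored form $-(n-f_{k_1}^{1\rceil})(n+f_{k_1}^{1\rceil}+\alpha+\beta+1)$, and the corresponding expression in the $i=2$ case.
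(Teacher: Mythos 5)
Your overall architecture --- a Sylvester determinantal identity for the lowering relation $P_{n+u_\F}^{\alpha,\beta;\F}=A_{i,\F}(P^{\alpha,\beta;\F_{i,\{k_i\}}}_{n+u_{\F_{i,\{k_i\}}}})$, then the operator factorizations, then the $B_{i,\F}$-relations as a formal consequence --- matches the route the paper intends (it omits the proof, deferring to Lemma 5.2 of \cite{duch} and Lemma 5.4 of \cite{dume}, where exactly this Sylvester/Darboux machinery is carried out). Your final step, recovering $B_{i,\F}(P^{\alpha,\beta;\F}_{n+u_\F})$ from $B_{i,\F}A_{i,\F}=D_{\F_{i,\{k_i\}}}+c_i\,Id$ and the eigenvalue equation via $\lambda(a)-\lambda(b)=(a-b)(a+b+\alpha+\beta+1)$, is correct, and your description of how (\ref{Lagab}) and the $(1+x)$-weights enter the $i=2$ column manipulations is sound.

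The gap is in the middle step. You claim the factorizations are \emph{``verified \ldots by matching the coefficients of $\partial^2,\partial,1$ \ldots which is routine algebra using the definitions.''} For $\partial^2$ and $\partial$ this is true: e.g.\ for $i=1$ the $\partial$-coefficient of $B_{1,\F}A_{1,\F}$ collapses to $-2(1-x^2)\Omega_{\F_{1,\{k_1\}}}'/\Omega_{\F_{1,\{k_1\}}}-\bigl((\alpha-\beta+2k_2)+(\alpha+\beta+2k_1)x\bigr)$, which is exactly $h_1$ of $D_{\F_{1,\{k_1\}}}$ in Theorem \ref{th5.1}. But the $\partial^0$-coefficient of $B_{1,\F}A_{1,\F}$ is
\begin{equation*}
-(1-x^2)\frac{\Omega_\F''}{\Omega_\F}+2(1-x^2)\frac{\Omega_\F'\,\Omega_{\F_{1,\{k_1\}}}'}{\Omega_\F\,\Omega_{\F_{1,\{k_1\}}}}+\bigl((\alpha-\beta+2k_2)+(\alpha+\beta+2k_1)x\bigr)\frac{\Omega_\F'}{\Omega_\F},
\end{equation*}
and equating this (minus $\lambda(f^{1\rceil}_{k_1})$) with $h_0$ of $D_{\F_{1,\{k_1\}}}$ is, after clearing denominators, precisely the statement $\bigl(D_{\F_{1,\{k_1\}}}+\lambda(f^{1\rceil}_{k_1})\bigr)(\Omega_\F)=0$ --- a genuine second-order differential relation between the two Wronskian-type determinants, not an algebraic identity in the operator coefficients. (It is forced: $\Omega_\F$ spans $\ker A_{1,\F}$, so no factorization through $A_{1,\F}$ can hold without it.) You must supply this fact. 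For $i=1$ it follows by observing that $\Omega_\F^{\alpha,\beta}$ coincides, up to sign, with $P^{\alpha,\beta;\F_{1,\{k_1\}}}_{f^{1\rceil}_{k_1}+u_{\F_{1,\{k_1\}}}}$ (compare (\ref{defhom}) with (\ref{deflax})), so Theorem \ref{th5.1} gives the needed eigenvalue equation; for $i=2$ the kernel of $A_{2,\F}$ is spanned by the non-polynomial function $(1+x)^{-(\beta+k_1-k_2+1)}\Omega_\F$, and the corresponding ``virtual eigenfunction'' identity needs a separate computation. The cleaner alternative --- and the one the cited lemmas actually take --- is to prove the $B_{i,\F}$ identities determinantally as well (a second Sylvester identity) and then deduce both operator factorizations from the fact that two second-order differential operators with rational coefficients agreeing on polynomials of infinitely many distinct degrees must coincide.
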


\section{Exceptional Jacobi polynomials}
In the previous Section, assuming the constrains (\ref{cpar11}) and (\ref{cpar12}) on the parameters $\alpha,\beta $, we have associated to each pair $\F$ of finite sets of positive integers the polynomials $P_n^{\alpha,\beta ;\F}$, $n\in \sigma_\F$,
which are always eigenfunctions of a second order differential operator with rational coefficients.
We are interested in the cases when, in addition, those polynomials are orthogonal and complete with respect to a positive measure.

\begin{definition} The polynomials $P_n^{\alpha,\beta ;F}$, $n\in \sigma_\F$, defined by (\ref{deflax}) are called exceptional Jacobi polynomials, if they are orthogonal and complete with respect to a positive measure.
\end{definition}

In the following Theorem we construct Jacobi exceptional polynomials (the proof is similar to that of Theorem 6.3 in \cite{dume} and it is omitted).

\begin{theorem}\label{th6.3} Given a pair $\F$ of finite sets of positive integers and real numbers $\alpha$ and $\beta$ satisfying (\ref{cpar11}) and (\ref{cpar12}), assume that
\begin{equation}\label{superc}
\alpha+k+1>0,\quad \beta+k_1-k_2+1>0,\qquad \Omega_\F^{\alpha,\beta}(x)\not=0,\quad x\in [-1,1].
\end{equation}
Then the polynomials $P_n^{\alpha,\beta ;\F}$, $n\in \sigma _\F$,
 are orthogonal with respect to the positive weight
\begin{equation}\label{molax}
\omega_{\alpha,\beta;\F}(x)=\frac{(1-x)^{\alpha +k}(1+x)^{\beta+k_1-k_2}}{(\Omega_\F^{\alpha,\beta}(x))^2},\quad -1<x<1,
\end{equation}
and their linear combinations  are dense in  $L^2(\omega_{\alpha,\beta; \F})$. Hence $P_n^{\alpha,\beta ;\F}$, $n\in \sigma _\F$, are
exceptional Jacobi polynomials.
\end{theorem}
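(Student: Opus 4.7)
The plan is two-fold: establish orthogonality by passing to the limit $N\to+\infty$ from the exceptional Hahn orthogonality of Theorem \ref{th4.5}, and establish completeness by exploiting the Darboux chain in Lemma \ref{lfel} to reduce to the classical Jacobi case.

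\emph{Admissibility for large $N$.} First I would check that (\ref{superc}) forces $\alpha,\beta,N,\F$ to be Hahn admissible for every sufficiently large positive integer $N$. By the uniform limit (\ref{lim2}), $N^{-(u_\F+k_1)}\Omega_\F^{\alpha,\beta,N}((1-x)N/2)$ converges to $\upsilon_\F^\alpha\Omega_\F^{\alpha,\beta}(x)$ uniformly on $[-1,1]$, and the right-hand side is bounded away from zero by hypothesis. Consequently, for large $N$, the products $\Omega_\F^{\alpha,\beta,N}(n)\Omega_\F^{\alpha,\beta,N}(n+1)$ have constant sign for $0\le n\le N-k_1$. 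Since $\alpha+k>-1$ and $\beta+k_1-k_2>-1$ make the binomial coefficients appearing in part 3 of Lemma \ref{l3.1} positive, that Lemma yields Hahn admissibility.

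\emph{Orthogonality.} With $N$ large enough, Theorem \ref{th4.5} gives
\[
\sum_{x=0}^{N-k_1}h_n^{\alpha,\beta,N;\F}(x)h_m^{\alpha,\beta,N;\F}(x)\,\frac{\binom{\alpha+k+x}{x}\binom{\beta+N-k_2-x}{N-k_1-x}}{\Omega_\F^{\alpha,\beta,N}(x)\Omega_\F^{\alpha,\beta,N}(x+1)}=0
\]
for $n\neq m$ in $\sigma_{N;\F}$. I would substitute $x=(1-t)N/2$ and rescale by the appropriate power of $N$ dictated by (\ref{lim1}), (\ref{lim2}) and the binomial asymptotics
\[
\binom{\alpha+k+x}{x}\binom{\beta+N-k_2-x}{N-k_1-x}\sim c_{\alpha,\beta,k}\,N^{\alpha+\beta+k+k_1-k_2}(1-t)^{\alpha+k}(1+t)^{\beta+k_1-k_2}.
\]
The sum then becomes a Riemann sum on the grid of spacing $2/N$ in $[-1,1]$ whose limit, up to a nonzero normalization, equals
\[
\int_{-1}^{1}P_n^{\alpha,\beta;\F}(t)P_m^{\alpha,\beta;\F}(t)\,\omega_{\alpha,\beta;\F}(t)\,dt,
\]
which therefore vanishes for $n\neq m$ in $\sigma_\F$. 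Positivity of the weight is immediate, and integrability at the endpoints follows from $\alpha+k>-1$, $\beta+k_1-k_2>-1$, together with the nonvanishing of $\Omega_\F^{\alpha,\beta}$ on $[-1,1]$.

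\emph{Completeness.} Since $\omega_{\alpha,\beta;\F}$ has compact support, polynomials are dense in $L^2(\omega_{\alpha,\beta;\F})$, so it suffices to show that any polynomial $p$ orthogonal in $L^2(\omega_{\alpha,\beta;\F})$ to every $P_n^{\alpha,\beta;\F}$, $n\in\sigma_\F$, must vanish. I would argue by induction on $k=k_1+k_2$, using Lemma \ref{lfel}: remove one element of $F_1$ (or $F_2$) and use that $A_{j,\F}$ and $B_{j,\F}$ are formal adjoints with respect to the pair $(\omega_{\alpha,\beta;\F},\omega_{\alpha,\beta;\F_{j,\{ k_j\} }})$. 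The intertwining $P_{n+u_\F}^{\alpha,\beta;\F}=A_{j,\F}(P_{n+u_{\F_{j,\{ k_j\} }}}^{\alpha,\beta;\F_{j,\{ k_j\} }})$ then converts the vanishing of $\langle p,P_n^{\alpha,\beta;\F}\rangle_{\omega_{\alpha,\beta;\F}}$ into the vanishing of $\langle B_{j,\F}p,P_m^{\alpha,\beta;\F_{j,\{ k_j\} }}\rangle_{\omega_{\alpha,\beta;\F_{j,\{ k_j\} }}}$ for every $m\in\sigma_{\F_{j,\{ k_j\} }}$. Iterating reduces to the classical Jacobi case $\F=(\emptyset,\emptyset)$, where completeness of $(P_n^{\alpha+k,\beta+k_1-k_2})_{n\ge 0}$ in $L^2((1-x)^{\alpha+k}(1+x)^{\beta+k_1-k_2}dx)$ is classical. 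Unwinding the chain backwards and accounting for the finite-dimensional kernels accumulated at each step (whose total dimension must match the $k_1$ missing indices $\{u_\F+f:f\in F_1\}$) forces $p=0$.

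\emph{Main obstacle.} The delicate point is the final bookkeeping: one must show that the accumulated kernel produced by the chain of $B$-operators has dimension exactly $k_1$ and is spanned by polynomials that already lie in the linear span of the $P_n^{\alpha,\beta;\F}$, so that no extraneous obstruction survives. This is resolved in \cite{duch,dume}, but here requires interleaving the $F_1$-type and $F_2$-type Darboux operators and tracking simultaneously the two parameter shifts $\alpha\mapsto\alpha+k$ and $\beta\mapsto\beta+k_1-k_2$ in the Jacobi weight.
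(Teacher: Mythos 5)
Your overall strategy for the orthogonality half is the one the paper intends (it omits the proof, pointing to the analogous Theorem 6.3 of \cite{dume}): deduce Hahn admissibility for all large $N$ from (\ref{superc}) and pass to the limit in the discrete orthogonality of Theorem \ref{th4.5}. Your admissibility step is correct, since (\ref{lim2}) is uniform on $[-1,1]$ and the binomial coefficients in part 3 of Lemma \ref{l3.1} are positive once $\alpha+k>-1$ and $\beta+k_1-k_2>-1$. The Riemann-sum step, however, is not as immediate as you state: the binomial asymptotics you invoke (the paper's (\ref{lm3})) hold uniformly only on compact subsets of the \emph{open} interval $(-1,1)$ and fail near $t=\pm 1$ (at $x=0$ the left-hand side is of order $N^{\beta+k_1-k_2}$, not $N^{\alpha+\beta+2k_1}(1-t)^{\alpha+k}$ with $1-t=0$). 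Because $P_n^{\alpha,\beta;\F}P_m^{\alpha,\beta;\F}$ changes sign, you cannot use the monotonicity device $\tau_N(I)\le\tau_N(\RR)$ by which the paper (proof of part 1 of Theorem \ref{convth}) confines its Riemann sums to compact subintervals; you need a uniform-in-$N$ domination of the endpoint contributions, e.g. $\binom{\alpha+k+x}{x}\le C(1+x)^{\alpha+k}$ together with $\alpha+k+1>0$, $\beta+k_1-k_2+1>0$, to make both tails $O(\epsilon^{1+\min(\alpha+k,\,\beta+k_1-k_2)})$. This is fixable, but it is the real content of the step. (A cleaner route already available in the paper: under (\ref{superc}) the contour in the complex orthogonality (\ref{cojx}) collapses onto $[-1,1]$, which is exactly how the Corollary after Theorem \ref{alv} computes the norms; the real orthogonality then drops out, including for pairs $n\neq m$ with coinciding eigenvalues, for which a symmetry argument with $D_\F$ alone would not suffice.)

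The completeness half is where the proposal genuinely fails. First, the reduction is logically invalid: density of polynomials in $L^2(\omega_{\alpha,\beta;\F})$ does \emph{not} reduce completeness to showing that no nonzero \emph{polynomial} is orthogonal to all $P_n^{\alpha,\beta;\F}$; the orthogonal complement of the closed span could be generated by non-polynomial functions, so you must take an arbitrary $f\in L^2(\omega_{\alpha,\beta;\F})$ with $\langle f,P_n^{\alpha,\beta;\F}\rangle=0$ for all $n\in\sigma_\F$ and prove $f=0$ (equivalently, show every polynomial lies in the closed span, which is not the statement you prove). Second, the Darboux induction is not closed: the claimed adjointness of $A_{j,\F}$ and $B_{j,\F}$ produces boundary terms at $\pm1$ that must be checked, and, more seriously, the kernels are not negligible — for instance $B_{1,\F}$ annihilates $\Omega_{\F_{1,\{k_1\}}}(x)(1-x)^{-(\alpha+k)}(1+x)^{-(\beta+k_1-k_2)}$, which belongs to $L^2(\omega_{\alpha,\beta;\F})$ throughout the nonempty parameter range $\alpha+k<1$, $\beta+k_1-k_2<1$, so the "bookkeeping" you defer is precisely where the argument can break. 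Your own closing paragraph concedes this step is unresolved; as written, the assertion that the $P_n^{\alpha,\beta;\F}$, $n\in\sigma_\F$, span a dense subspace of $L^2(\omega_{\alpha,\beta;\F})$ is not proved. Note that under (\ref{superc}) the factor $1/\Omega_\F^{\alpha,\beta}(x)^2$ is bounded above and below on $[-1,1]$, so $L^2(\omega_{\alpha,\beta;\F})$ coincides with $L^2((1-x)^{\alpha+k}(1+x)^{\beta+k_1-k_2}dx)$ with equivalent norms; any completeness proof should exploit this, and it is the natural starting point for the argument the paper has in mind.
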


The Jacobi admissibility of $\alpha,\beta $ and $\F$ is a necessary condition for the assumptions (\ref{superc}) in the previous Theorem.

\begin{theorem}\label{alv} Given a pair $\F$ of finite sets of positive integers and real numbers $\alpha$ and $\beta$ satisfying (\ref{cpar11}) and (\ref{cpar12}), assume that (\ref{superc}) holds. Then $\alpha,\beta $ and $\F$ are Jacobi admissible.
\end{theorem}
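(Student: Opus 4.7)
The plan is to reduce Jacobi admissibility to Hahn admissibility for large $N$ via the asymptotic (\ref{lim2}), and then extract the correct sign from Remark~\ref{rm1}. First I would exploit that $\Omega_\F^{\alpha,\beta}$ is continuous and non-vanishing on the \emph{closed} interval $[-1,1]$ by (\ref{superc}), hence has a definite sign $\varepsilon\in\{\pm 1\}$ and is bounded below in absolute value there. Parametrizing $x_n=1-2n/N$ so that $n=0,\ldots,N-k_1+1$ corresponds to $x_n$ in a compact subset of $[-1,1]$, the uniform convergence of (\ref{lim2}) on compacts of $\CC$ forces $\Omega_\F^{\alpha,\beta,N}(n)$ to have constant non-zero sign $\sign(\upsilon_\F^\alpha)\varepsilon$ for every such $n$, once $N$ is sufficiently large. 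In particular, $\Omega_\F^{\alpha,\beta,N}(n)\Omega_\F^{\alpha,\beta,N}(n+1)>0$ for all $n=0,\ldots,N-k_1$.

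Next I would verify condition (3) of Lemma~\ref{l3.1}. The inequalities $\alpha+k+1>0$ and $\beta+k_1-k_2+1>0$ from (\ref{superc}) give $\binom{\alpha+k+n}{n}=(\alpha+k+1)_n/n!>0$ and $\binom{\beta+N-k_2-n}{N-k_1-n}=(\beta+k_1-k_2+1)_{N-k_1-n}/(N-k_1-n)!>0$ for $n=0,\ldots,N-k_1$. Combined with the previous step, the ratio in part (3) of Lemma~\ref{l3.1} is strictly positive throughout that range, so the implication (3)$\Rightarrow$(2) of that lemma yields Hahn admissibility of $\alpha,\beta,N,\F$ for every sufficiently large $N$; I would pick one with $N>\widehat{\alpha+\beta+1}$. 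The common sign of $\HH_\F^{\alpha,\beta,N}$ is then $\sign((\alpha+1)_k(\beta+1)_{k_1-k_2})$ by Lemma~\ref{l3.1}(ii), whose argument only needs the inequalities $\alpha+k+1>0$ and $\beta+k_1-k_2+1>0$ that we have directly from (\ref{superc}), so there is no circular appeal to the still-unproved Lemma~\ref{ladm}(1).

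To finish I would apply Remark~\ref{rm1}, which for $N>\widehat{\alpha+\beta+1}$ reads $\sign(\HH_\F^{\alpha,\beta,N}(x))=\sign(\HH_\F^{\alpha,\beta}(x))\sign(\Gamma(\alpha+1)\Gamma(\beta+1))$. Solving and using the identity $(a)_m\Gamma(a)=\Gamma(a+m)$ one obtains
$$\sign(\HH_\F^{\alpha,\beta}(x))=\sign\bigl(\Gamma(\alpha+k+1)\Gamma(\beta+k_1-k_2+1)\bigr),$$
and both Gamma values are positive by hypothesis, so $\HH_\F^{\alpha,\beta}(x)\ge 0$ for all $x\in\NN$ (with equality exactly when the product of linear factors in $\HH_\F^{\alpha,\beta}$ vanishes), which is Jacobi admissibility. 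The main obstacle is the uniform sign control in the first step: we need a \emph{single} $N$ for which the signs of $\Omega_\F^{\alpha,\beta,N}(n)$ are controlled simultaneously for all $n\in\{0,\ldots,N-k_1+1\}$, including $n$ near the upper endpoint where $x_n$ approaches $-1$, and this is exactly why (\ref{superc}) demands non-vanishing of $\Omega_\F^{\alpha,\beta}$ on the closed interval $[-1,1]$ rather than just on $(-1,1)$.
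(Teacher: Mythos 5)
Your proof is correct, but it takes a genuinely different route from the paper. The paper proves Theorem \ref{alv} via a complex (Pochhammer-contour) orthogonality for the exceptional Jacobi polynomials in the spirit of \cite{KAO} and \cite{dupe}: the contour integral of $(P_{n+u_\F}^{\alpha,\beta;\F})^2(1-z)^{\alpha+k}(1+z)^{\beta+k_1-k_2}/\Omega_\F^{\alpha,\beta}(z)^2$ is computed in closed form as an explicit constant times $\HH_\F^{\alpha,\beta}(n)$, and under (\ref{superc}) the contour collapses onto $[-1,1]$ where the integrand is nonnegative, forcing $\HH_\F^{\alpha,\beta}(n)\ge 0$. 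You instead discretize: you transfer the sign of $\Omega_\F^{\alpha,\beta}$ on $[-1,1]$ to the values $\Omega_\F^{\alpha,\beta,N}(n)$, $n=0,\dots,N-k_1+1$, via (\ref{lim2}), verify condition (3) of Lemma \ref{l3.1} using the two inequalities in (\ref{superc}), obtain Hahn admissibility for large $N$ together with the sign identification of part (ii) of that lemma (correctly avoiding part (iii), the only place where the still-unproved Lemma \ref{ladm}(1) enters), and then convert to Jacobi admissibility through Remark \ref{rm1} and $\Gamma(\alpha+k+1),\Gamma(\beta+k_1-k_2+1)>0$. This is essentially the strategy the author uses for the partial converse (the measure $\tau_N$ in the proof of Theorem \ref{convth}), run in the forward direction; it is self-contained within the paper, whereas the paper's contour argument additionally yields the explicit norm formula of the subsequent Corollary. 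Two small points you should make explicit: (a) the paper only states the uniformity of (\ref{lm1}) on compact subsets of the open interval $(-1,1)$, so your uniform sign control up to the endpoints needs a justification --- e.g.\ that $\Omega_\F^{\alpha,\beta,N}((1-x)N/2)/N^{u_\F+k_1}$ is a polynomial in $x$ of fixed degree $u_\F+k_1$, so pointwise convergence on $(-1,1)$ upgrades to uniform convergence on compacts of $\CC$, combined with Lemma \ref{v0le} and continuity to extend the non-vanishing of $\Omega_\F^{\alpha,\beta}$ to a neighbourhood of $[-1,1]$ (note also that for $k_1=0$ the point $x_{N+1}=-1-2/N$ falls just outside $[-1,1]$); and (b) Hahn admissibility only controls $x=0,\dots,N$, so to conclude $\HH_\F^{\alpha,\beta}(x)\ge 0$ for all $x\in\NN$ you should choose $N$ larger than the finite determining set indicated in the remark following Definition \ref{dadj}. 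Both are routine to fill.
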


\begin{proof}

Proceeding as in \cite{dupe}, the theorem is an easy consequence of the following complex orthogonality for the exceptional Jacobi polynomials.

Consider the path $\Lambda$ encircling the points $+1$ and $-1$ first in a positive sense and then in a negative sense, as shown in Fig. 2.1.

\bigskip

\centerline{\includegraphics[scale=.7]{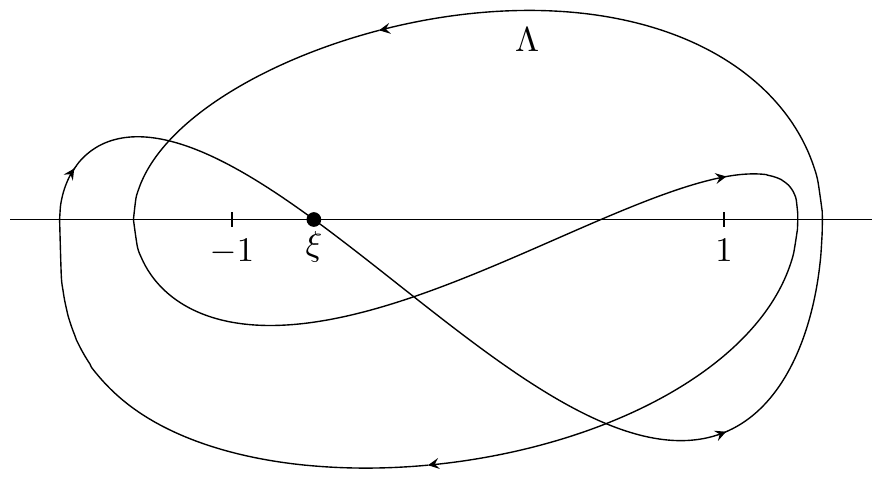}}
\centerline{Figure 2.1: Path $\Lambda$}

\bigskip

The point $\xi\in (-1,1)$ is the beginning and endpoint of
$\Lambda$. For $\alpha,\beta\in \CC$, we consider
$$
(1-z)^\alpha(1+z)^\beta=e^{\alpha\log(1-z)+\beta(1+z)}.
$$
It is a multi-valued function with branch points at $\infty$ and $\pm 1$. However, if we start with a value of $(1-z)^\alpha(1+z)^\beta$ at a particular value of $\Lambda$, and extend the definition of $(1-z)^\alpha(1+z)^\beta$ continuously along $\Lambda$, then we obtain a single-valued function on $\Lambda$ (if we view $\Lambda$ as a contour on the Riemann surface for the function $(1-z)^\alpha(1+z)^\beta$). For definiteness, we assume that the \textit{starting point} is $\xi\in (-1,1)$, and the branch of $(1-z)^\alpha(1+z)^\beta$ is such that $(1-\xi)^\alpha(1+\xi)^\beta>0$. In \cite{KAO}, it has been proved orthogonality for Jacobi polynomials using this contour. This orthogonality can be extended for exceptional Jacobi polynomials. Indeed, given $\alpha,\beta $ satisfying (\ref{cpar11}) and (\ref{cpar12}) ($\alpha$ and $\beta$ can be also complex numbers)
and a pair $\F=(F_1,F_2)$ of finite sets of positive integers, the complex orthogonality mentioned above for the exceptional Jacobi polynomials is the following:
there exists a contour $\Lambda$ as described above such that
\begin{align}\label{cojx}
   \int_{\Lambda} P_{n+u_\F}^{\alpha,\beta;\F}(z)P_{m+u_\F}^{\alpha,\beta;\F}(z)
   &\frac{(1-z)^{\alpha +k}(1+z)^{\beta+k_1-k_2}}{\Omega_\F^{\alpha,\beta}(z)^2} \, dz
   \\\nonumber
   &=-\frac{4e^{\pi i(\alpha+\beta)}\sin(\pi \alpha)\sin(\pi\beta)}{2^{-\alpha-\beta-1}n!}\HH^{\alpha,\beta}_\F(n),
\end{align}
for every $n,m\not \in F_1$, where the function $\HH^{\alpha,\beta}_\F$ is defined in (\ref{defadmj}).

This complex orthogonality can be proved as the Lemma 1.3 in \cite{dupe}, using the factorizations in Lemma \ref{lfel} for the exceptional Jacobi polynomials instead of those given in Lemma 2.2 in \cite{dupe} for the exceptional Laguerre polynomials. The proof of this Theorem is then similar to that of Theorem 1.2 in \cite{dupe}.
\end{proof}

\bigskip
Using the complex orthogonality (\ref{cojx}) one can easily
deduce the norm of the exceptional Jacobi polynomials.

\begin{corollary} With the same hypothesis as in Theorem \ref{th6.3}, we have
for $n\not \in F_1$ (that is, $n+u_\F\in \sigma_\F$)
\begin{align*}
\int _{-1}^1 (P_{n+u_\F}^{\alpha,\beta ;\F}(x))^2 \frac{(1-x)^{\alpha+k}(1+x)^{\beta+k_1-k_2}}{(\Omega_\F^{\alpha,\beta}(x))^2}dx&=\frac{2^{\alpha+\beta+1}
}{n!}\HH^{\alpha,\beta}_\F(n),
\end{align*}
where the function $\HH^{\alpha,\beta}_\F$ is defined in (\ref{defadmj}).
\end{corollary}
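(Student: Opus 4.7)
The plan is to obtain this norm formula as a direct consequence of the complex orthogonality \eqref{cojx} proved in Theorem \ref{alv}, by specializing to $m=n$ and collapsing the Pochhammer-type contour $\Lambda$ onto the real segment $[-1,1]$.

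First I would take $m=n$ in \eqref{cojx} and observe that the integrand on the left-hand side has the form $F(z)(1-z)^{\alpha+k}(1+z)^{\beta+k_1-k_2}$, where
\[
F(z)=\frac{P_{n+u_\F}^{\alpha,\beta;\F}(z)^2}{\Omega_\F^{\alpha,\beta}(z)^2}
\]
is a rational function with no poles on $[-1,1]$ (by the assumption $\Omega_\F^{\alpha,\beta}(x)\neq 0$ on $[-1,1]$ in Theorem \ref{th6.3}, and hence on a full open neighborhood of $[-1,1]$). The multi-valued factor $(1-z)^{\alpha+k}(1+z)^{\beta+k_1-k_2}$, with branch cuts chosen along $(-\infty,-1]$ and $[1,\infty)$ starting from the positive value $(1-\xi)^{\alpha+k}(1+\xi)^{\beta+k_1-k_2}>0$ at the basepoint $\xi\in(-1,1)$, is exactly the setting of the classical Pochhammer double-loop integral.

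Next I would invoke the standard collapsing formula: for any function $F$ holomorphic in a neighborhood of $[-1,1]$ and any $a,b\in\CC$ with $a,b,a+b\neq -1,-2,\ldots$,
\[
\int_\Lambda F(z)(1-z)^a(1+z)^b\,dz=-4e^{i\pi(a+b)}\sin(\pi a)\sin(\pi b)\int_{-1}^1 F(x)(1-x)^a(1+x)^b\,dx,
\]
which is obtained by tracking the argument of $(1-z)^a(1+z)^b$ around the four loops of $\Lambda$ (the portions encircling $+1$ and $-1$ each contribute after the two opposite traversals, producing the factor $(e^{2\pi i a}-1)(e^{2\pi i b}-1)=-4e^{i\pi(a+b)}\sin(\pi a)\sin(\pi b)$ relative to the real integral). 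Applying this with $a=\alpha+k$ and $b=\beta+k_1-k_2$ and using $\sin(\pi(\alpha+k))=(-1)^k\sin(\pi\alpha)$, $\sin(\pi(\beta+k_1-k_2))=(-1)^{k_1-k_2}\sin(\pi\beta)$ and $e^{i\pi(\alpha+k+\beta+k_1-k_2)}=(-1)^{k+k_1-k_2}e^{i\pi(\alpha+\beta)}$, the integer shifts $k$ and $k_1-k_2$ cancel and the prefactor reduces to $-4e^{i\pi(\alpha+\beta)}\sin(\pi\alpha)\sin(\pi\beta)$, exactly the constant appearing on the right-hand side of \eqref{cojx}.

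Finally I would equate both sides of \eqref{cojx} with $m=n$, cancel the common factor $-4e^{i\pi(\alpha+\beta)}\sin(\pi\alpha)\sin(\pi\beta)$, and read off
\[
\int_{-1}^1 \bigl(P_{n+u_\F}^{\alpha,\beta;\F}(x)\bigr)^2\frac{(1-x)^{\alpha+k}(1+x)^{\beta+k_1-k_2}}{\bigl(\Omega_\F^{\alpha,\beta}(x)\bigr)^2}\,dx=\frac{2^{\alpha+\beta+1}}{n!}\HH^{\alpha,\beta}_\F(n).
\]
There is no real obstacle here: the only subtle point is the bookkeeping of phases in the Pochhammer collapse, which is a classical calculation, and the cancellation of the integer shifts between the $\sin$ factors and the exponential phase is what makes the formula come out cleanly with the same $\sin(\pi\alpha)\sin(\pi\beta)$ that already appears on the right of \eqref{cojx}.
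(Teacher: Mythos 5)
Your proposal is correct and is essentially the paper's own argument: the paper proves the corollary in one line by invoking the complex orthogonality (\ref{cojx}) with $m=n$ and collapsing the Pochhammer contour onto $[-1,1]$, which is exactly the phase bookkeeping you carry out, and your cancellation of the integer shifts $k$ and $k_1-k_2$ between the sines and the exponential is the right computation. The only point worth a footnote (which the paper also glosses over) is that dividing by $\sin(\pi\alpha)\sin(\pi\beta)$ requires $\alpha,\beta\notin\ZZ$, the remaining cases following by continuity in $\alpha$ and $\beta$ since both sides of the final identity are analytic there and the real integral converges under the hypotheses $\alpha+k>-1$, $\beta+k_1-k_2>-1$.
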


We guess that the converse of Theorem \ref{alv} is also true. But we have only been able to prove it under the additional technical assumption that $\alpha+\beta+s_\F+1>0$, where the nonnegative integer  $s_\F$ is defined in (\ref{defs0f}).

\begin{theorem}\label{convth} Given  real numbers $\alpha$ and $\beta$ satisfying (\ref{cpar11})  and (\ref{cpar12}) and a pair $\F$ of finite sets of positive integers, assume that $\alpha $, $\beta$ and $\F$ are Jacobi admissible. Then
\begin{enumerate}
\item $\alpha +k>-1$ and $\beta+k_1-k_2>-1$.
\item If in addition we assume $\alpha+\beta+s_\F+1>0$ then $\Omega_\F ^{\alpha,\beta}(x)\not =0$ for $x\in [-1,1]$.
\end{enumerate}
\end{theorem}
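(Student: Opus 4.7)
\emph{Approach to Part 1.} I would descend from the Hahn case. By Remark \ref{rm1}, the Jacobi admissibility of $(\alpha,\beta,\F)$ entails Hahn admissibility of $(\alpha,\beta,N,\F)$ for every $N>\widehat{\alpha+\beta+1}$. Theorem \ref{th4.5} then guarantees that the discrete measure $\omega_{\alpha,\beta,N}^\F$ is of constant sign on $\{0,\ldots,N-k_1\}$, so the ratio of any two consecutive point-masses is strictly positive. From the explicit mass formula a direct computation gives
\begin{equation*}
\frac{\omega_{\alpha,\beta,N}^\F(x+1)}{\omega_{\alpha,\beta,N}^\F(x)}
=\frac{\Omega_\F^{\alpha,\beta,N}(x)}{\Omega_\F^{\alpha,\beta,N}(x+2)}\cdot\frac{(\alpha+k+x+1)(N-k_1-x)}{(x+1)(\beta+N-k_2-x)}.
\end{equation*}
Specializing at $x=0$ and letting $N\to\infty$, the scaling limit (\ref{lim2}) together with Lemma \ref{v0le} yields $\Omega_\F^{\alpha,\beta,N}(0)/\Omega_\F^{\alpha,\beta,N}(2)\to 1$ (both values are asymptotic to $\upsilon_\F^\alpha\,\Omega_\F^{\alpha,\beta}(1)\,N^{u_\F+k_1}$, which is non-zero), while the remaining factors tend to $\alpha+k+1$. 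Positivity of the limiting ratio therefore forces $\alpha+k+1>0$. The symmetric specialization at $x=N-k_1-1$, whose asymptotic involves the non-vanishing value $\Omega_\F^{\alpha,\beta}(-1)$, produces $\beta+k_1-k_2+1>0$.

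\emph{Approach to Part 2.} I would argue by induction on $k=k_1+k_2$, supplemented by the Downarrow reduction. The base case $k=0$ is trivial since $\Omega_\F\equiv 1$, and Lemma \ref{v0le} covers the endpoints $x=\pm 1$ uniformly under (\ref{cpar11}) and (\ref{cpar12}), so only the absence of zeros in the open interval $(-1,1)$ has to be checked. The extra hypothesis $\alpha+\beta+s_\F+1>0$ is exactly what is required to apply part 4 of Lemma \ref{ladm}, giving the Jacobi admissibility of $(\alpha+s_\F,\beta+s_\F,\F_\Downarrow)$. In the generic situation where $F_1\neq\emptyset$ and $F_1\neq\{1,\ldots,k_1\}$ we have $|\F_\Downarrow|<|\F|$, and the inductive hypothesis yields $\Omega_{\F_\Downarrow}^{\alpha+s_\F,\beta+s_\F}\neq 0$ on $[-1,1]$. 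The identity (\ref{rromh}) then identifies $\Omega_{\F_\Downarrow}^{\alpha+s_\F,\beta+s_\F}$ with a non-zero constant multiple of $P_{u_\F}^{\alpha,\beta;\F}$, and the Darboux factorizations of Lemma \ref{lfel} transfer this non-vanishing back to $\Omega_\F^{\alpha,\beta}$. The remaining configurations ($F_1=\emptyset$ or $F_1=\{1,\ldots,k_1\}$), for which the Downarrow operation does not decrease the size of $\F$, are treated by part 3 of Lemma \ref{ladm}, which allows the replacement $(\alpha,\beta)\mapsto(\alpha+1,\beta+1)$ without losing Jacobi admissibility; iterating this shift brings us back into the generic case.

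\emph{Main obstacle.} The delicate point is the transfer step in Part 2: the inductive hypothesis provides non-vanishing of $\Omega_{\F_\Downarrow}^{\alpha+s_\F,\beta+s_\F}$, and this must be translated into non-vanishing of $\Omega_\F^{\alpha,\beta}$ itself. Tracking how potential zeros behave under the chain of Darboux transformations furnished by Lemma \ref{lfel}, and ensuring that no zero produced by the intertwining can lie in the closed interval $[-1,1]$, is the most technical ingredient; this bookkeeping has to be carried out carefully near the endpoints $\pm 1$ and in the degenerate cases where $F_1$ is either empty or complete.
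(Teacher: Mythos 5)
Your Part 1 is correct and takes a genuinely different route from the paper's. The paper fixes a small interval $I=[u,v]$ near $x=1$, shows via the scaling limits that $\tau_N(I)/N^{\alpha+\beta+1}$ converges to a constant times $\int_I H(x)\,dx$, bounds this uniformly by the total mass $\tau_N(\RR)/N^{\alpha+\beta+1}$, and derives a contradiction from the divergence of $\int (1-x)^{\alpha+k}\cdots\,dx$ when $\alpha+k\le -1$. You instead read the sign condition off the ratio of two consecutive masses of $\omega_{\alpha,\beta,N}^\F$ at an endpoint of its support: your ratio formula is correct, the limit $\Omega_\F^{\alpha,\beta,N}(0)/\Omega_\F^{\alpha,\beta,N}(2)\to 1$ does follow from (\ref{lim2}) together with Lemma \ref{v0le}, and the borderline cases $\alpha+k+1=0$ and $\beta+k_1-k_2+1=0$ are excluded by (\ref{cpar11}) and (\ref{cpar12}). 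This is shorter than the paper's Riemann-sum argument, at the price of only giving information at the two endpoints --- which is all Part 1 needs.

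Part 2, however, has genuine gaps. First, the induction parameter is wrong: when $1\notin F_1$ we have $s_\F=1$ and $(F_1)_{\Downarrow}=\{f-1:f\in F_1\}$ has the \emph{same} cardinality as $F_1$ (see (\ref{defffd})), so $\F_{\Downarrow}$ has as many elements as $\F$ and an induction on $k$ does not close. The quantity that always strictly decreases is $\max F_1$ (it drops by $s_\F\ge 1$), and that is what the paper inducts on, with base case $F_1=\emptyset$. Second --- and this is exactly the step you flag as the main obstacle --- the transfer from ``$\Omega_{\F_{\Downarrow}}^{\alpha+s_\F,\beta+s_\F}$ has no zero in $(-1,1)$'' to ``$\Omega_\F^{\alpha,\beta}$ has no zero in $(-1,1)$'' is not a formal consequence of (\ref{rromh}) plus the factorizations of Lemma \ref{lfel}: those factorizations relate $\F$ to $\F_{1,\{k_1\}}$ or $\F_{2,\{k_2\}}$, not to $\F_{\Downarrow}$, and knowing $P_{u_\F}^{\alpha,\beta;\F}(x_0)\ne 0$ says nothing directly about $\Omega_\F^{\alpha,\beta}(x_0)$. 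The paper supplies this step as Lemma \ref{lem6.2}, whose proof is quantitative: the same measure comparison used for Part 1 yields a bound $\int_u^v (1-x)^{\alpha+k}(1+x)^{\beta+k_1-k_2}(\Omega_{\F_{\Downarrow}}^{\alpha+s_\F,\beta+s_\F})^2(\Omega_\F^{\alpha,\beta})^{-2}\,dx\le cd/4^{k_1}$ uniformly in $u$, so a zero of $\Omega_\F^{\alpha,\beta}$ at an interior point $x_0$ where $\Omega_{\F_{\Downarrow}}^{\alpha+s_\F,\beta+s_\F}$ does not vanish would force the integral to blow up as $u\to x_0^{+}$. Third, the base case $F_1=\emptyset$ is not trivial and is not handled by part 3 of Lemma \ref{ladm} alone: the shift $(\alpha,\beta)\mapsto(\alpha+1,\beta+1)$ leaves $\F$ unchanged (indeed $\F=\F_{\Downarrow}$ when $F_1=\emptyset$), so iterating it never lands in a ``generic'' case. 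The paper runs a separate induction on $k_2$ here: it shows $\Omega_\F^{\alpha+j,\beta+j}(x_0)=0$ for every $j\ge 0$, converts these vanishings into a rank statement (via Lemma \ref{rmc}) for a matrix built from shifted Jacobi polynomials, and contradicts the degree bound for a nonzero combination $\sum_{f\in F_2}e_f(1+x)^{s}P_f^{\tilde\alpha,-\tilde\beta}(x)$ having a zero of excessive multiplicity at $x_0$. None of that machinery appears in your sketch, so Part 2 as proposed does not constitute a proof.
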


We first prove part 1 of Theorem \ref{convth}

\begin{proof}[Proof of part 1 of Theorem \ref{convth}]
Assume that the positive integer $N$ is big enough so that $F_1\subset \{ 0,1,\cdots, N\}$. Taking into account (\ref{cpar11}) and (\ref{cpar12}), we have that $\alpha, \beta$ and $N$ satisfy (\ref{cpar21}) and (\ref{cpar22}).
Consider the measure $\tau _N$ defined by
\begin{equation}\label{defmt}
\tau _N=\sum _{x=0}^{N-k_1}\frac{\binom{\alpha+k+x}{x}\binom{\beta+N-k_2-x}{N-k_1-x}(h_{u_\F}^{\alpha,\beta,N;\F}(x))^2}{\Omega _\F^{\alpha,\beta,N}(x)\Omega_\F^{\alpha,\beta,N}(x+1)}\delta {y_{N,x}},
\end{equation}
where
\begin{equation}\label{defya}
y_{N,x}=1-2x/N.
\end{equation}
Since $\alpha ,\beta$ and $\F$ are Jacobi admissible, by taking $N>\widehat{\alpha+\beta+1}$ we have that $\alpha,\beta$ and $N$ are also Hahn admissible (see Remark \ref{rm1}). Hence the measure $\tau_N$ is either positive or negative (Theorem \ref{th4.5}).

Consider the positive integer $s_\F$ and the pair $\F_\Downarrow$ defined by (\ref{defs0f}) and (\ref{deffd}), respectively.
We need the following limits
\begin{align}\label{lm1}
\lim _{N\to +\infty}\frac{\Omega _\F^{\alpha,\beta,N}((1-x)N/2)}{N^{u_\F+k_1}}&=\upsilon^\alpha_{\F}\Omega _\F^{\alpha,\beta} (x),\\\label{lm11}
\lim _{N\to +\infty}\frac{\Omega _\F^{\alpha,\beta,N}((1-x)N/2+1)}{N^{u_\F+k_1}}&=\upsilon^\alpha_{\F}\Omega _\F^{\alpha,\beta} (x),\\\label{lm2}
\lim _{N\to +\infty}\frac{h _{u_\F}^{\alpha,\beta,N;\F}((1-x)N/2)}{N^{u_\F}}&=\upsilon^{\alpha;\F}_{u_\F}P _{u_\F}^{\alpha,\beta;\F}(x),\\\label{lm3}
\lim _{N\to +\infty}\frac{\binom{\alpha+k+(1-x)N/2}{(1-x)N/2}\binom{\beta+N-k_2-(1-x)N/2}{N-k_1-(1-x)N/2}}{N^{\alpha+\beta+2k_1}}&=\frac{(1-x)^{\alpha+k}
(1+x)^{\beta+k_1-k_2}}{c},
\end{align}
uniformly in compact sets of the interval $(-1,1)$, where $\upsilon^{\alpha;{\F}}_{u_\F}$ and $\upsilon^\alpha_{\F}$ are defined by (\ref{defupn}) and (\ref{defup}), respectively, and $c$ is given by
\begin{equation}\label{defc}
c=2^{\alpha+\beta+2k_1}
\Gamma(\alpha+k+1)\Gamma(\beta+k_1-k_2+1).
\end{equation}
The  first limit is (\ref{lim2}). The second one is a consequence of
(\ref{lim3}). The third one is (\ref{lim1}). The forth one is consequence of the asymptotic behavior of $\Gamma(z+u)/\Gamma(z+v)$ when $z\to\infty$ (see \cite{EMOT}, vol. I (4), p. 47).

Lemma \ref{v0le} shows that $\Omega_\F^{\alpha,\beta} (\pm 1)\not =0$.  We take a real number $u$ with $0<u<1$ such that $\Omega _\F^{\alpha,\beta} (x) \not =0$ for $x\in [u,1]$. For a real number $v$ with $u<v<1$, write $I=[u,v]$, Then $\Omega _\F^{\alpha,\beta} $ does not vanish in $I$. Applying Hurwitz's Theorem to the limits (\ref{lm1}) and (\ref{lm11}) we can choice a contable set $X$ of positive integers with $\lim_{N\in X} N=+\infty$ such that $\Omega _\F^{\alpha,\beta,N}((1-x)N/2)\Omega _\F^{\alpha,\beta,N}((1-x)N/2+1)\not =0$, $x\in I$ and $N\in X$.

Hence, using (\ref{defupn}) and (\ref{defup}), we can combine the limits (\ref{lm1}), (\ref{lm11}), (\ref{lm2}) and (\ref{lm3}) to get
\begin{equation}\label{lm4}
\lim _{N\to +\infty;N\in X}H_N(x)=\frac{4^{k_1}}{c}H(x),\quad \mbox{uniformly in $I$, where}
\end{equation}

\begin{align*}
H_N(x)&=\frac{\binom{\alpha+k+(1-x)N/2}{(1-x)N/2}\binom{\beta+N-k_2-(1-x)N/2}{N-k_1-(1-x)N/2}(h_{u_\F}^{\alpha,\beta,N})^2((1-x)N/2)}{N^{\alpha+\beta}\Omega _\F^{\alpha,\beta,N}((1-x)N/2)\Omega _\F^{\alpha,\beta,N}((1-x)N/2+1)},\\
H(x)&=\frac{(1-x)^{\alpha+k}(1+x)^{\beta+k_1-k_2}(P_{u_\F}^{\alpha,\beta,N})^2(x)}
{(\Omega_\F^{\alpha,\beta})^2(x)}.
\end{align*}

We now prove that
\begin{equation}\label{lm5}
\lim _{N\to +\infty ;N\in X}\frac{2\tau _N(I)}{N^{\alpha+\beta+1}}=\frac{4^{k_1}}{c}\int_{I}H(x)dx.
\end{equation}
To do that, write $I_N=\{ x\in \NN: (1-v)N/2\le x\le (1-u)N/2\}$, ordered in decreasing size.
The numbers $y_{N,x}$, $x\in I_N$, form a partition of the interval $I$ with $y_{N,x+1}-y_{N,x}=2/N$ (see (\ref{defya})). Since the function $H$ is continuous  in the interval $I$, we get that
$$
\int_{I}H(x)dx=\lim_{N\to 1; N\in X}S_N,
$$
where $S_N$ is the Cauchy sum
$$
S_N=\sum_{x\in I_N}H(y_{N,x})(y_{N,x+1}-y_{N,x}).
$$
On the other hand, since $x\in I_N$ if and only if $u\le y_{N,x}\le v$ (\ref{defya}), we get
\begin{align*}
\frac{2\tau _N(I)}{N^{\alpha+\beta+1}}&=\frac{2}{N^{\alpha+\beta+1}}\sum _{x\in I_N}\frac{\binom{\alpha+k+x}{x}\binom{\beta+N-k_2-x}{N-k_1-x}(h_{u_\F}^{\alpha,\beta,N})^2(x)}{\Omega _\F^{\alpha,\beta,N}(x)\Omega _\F^{\alpha,\beta,N}(x)}\\
&=\frac{2}{N}\sum _{x\in I_N}H_N(y_{N,x})=\sum _{x\in I_N}H_N(y_{N,x})(y_{N,x+1}-y_{N,x}).
\end{align*}
The limit (\ref{lm5}) now follows from the uniform limit (\ref{lm4}).

Since $H(x)>0$, $x\in I$, (\ref{lm5}) gives that $c$ has the same sign as the measure $\tau _N$ (let us remind that $\tau _N$ is either a positive or a negative measure).

The identity (\ref{pf5}) and (\ref{mraf}) says that
$$
\tau _N(\RR)=\frac{\prod_{f\in F_1}(\lambda(0)-\lambda(f))\prod_{f\in F_2}(\lambda(0)-\lambda(f-\beta))}{(\alpha+1)_k(\beta+1)_{k_1-k_2}w_{*,\alpha,\beta,N}(0)}.
$$
Using (\ref{masdh}), one gets
\begin{equation}\label{lmn1}
\lim_{N\to+\infty}\frac{2\tau _N(\RR)}{N^{\alpha+\beta+1}}=d,
\end{equation}
where  $d$ is certain non-null real number which has the same sign as the measure $\tau _N$. Hence $c$ and $d$ have the same sign.
We also have
\begin{equation}\label{lmn2}
\tau_N(I)\begin{cases}\le \tau_N(\RR),& \mbox{if $\tau_N$ is a positive measure},\\
\ge \tau_N(\RR),& \mbox{if $\tau_N$ is a negative measure}.
\end{cases}
\end{equation}
Hence, using (\ref{lm5}), (\ref{lmn1}), (\ref{lmn2}) and taking into account that $\tau_N$, $c$ and $d$ has the same sign,
one gets
$$
\int_{I}H(x)dx \le \frac{cd}{4^{k_1}}.
$$
That is
$$
\int _u^v\frac{(1-x)^{\alpha +k}(1+x)^{\beta+k_1-k_2}(\Omega_{\F_{\Downarrow }}^{\alpha +s_\F,\beta +s_\F})^2(x)}{(\Omega_\F^{\alpha,\beta} )^2(x)}dx\le \frac{cd}{4^{k_1}z^2_{\alpha,\beta;\F}},
$$
where we have use (\ref{rromh}).

Notice that  since $\alpha, \beta$ and $\F$ satisfy (\ref{cpar11}) and (\ref{cpar12}),
$\alpha+s_F$, $\beta+s_F$ and $\F_{\Downarrow }$  satisfy (\ref{cpar11}) and (\ref{cpar12}) as well. Using
Lemma \ref{v0le}, we get $\Omega_{\F_{\Downarrow }}^{\alpha +s_\F,\beta +s_\F}(1)\not =0$. Then,
if $\alpha +k\le -1$, we would deduce
$$
\lim_{v\to 1^-}\int _u^v\frac{(1-x)^{\alpha +k}(1+x)^{\beta+k_1-k_2}(\Omega_{\F_{\Downarrow }}^{\alpha +s_\F,\beta +s_\F})^2(x)}{(\Omega_\F^{\alpha,\beta} )^2(x)}dx=+\infty,
$$
which it is a contradiction. Hence $\alpha +k> -1$.

Proceeding in the same way but using $x=-1$ instead of $x=1$, we can prove that $\beta+k_1-k_2> -1$.
\end{proof}

Notice that part 1 of Theorem \ref{convth} can be proved under the weaker assumption of being $\alpha,\beta$ and $N$  Hahn admissible for $N>\widehat{\alpha+\beta+1}$ (according to part 3 of Lemma \ref{l3.1}, this implies the constant sign of the measure $\tau_N$ (\ref{defmt})). So part 1 of Theorem \ref{convth} actually provides a proof for part 1 of Lemma \ref{ladm}. We are now ready to prove part 2 of Lemma \ref{ladm} which  also  follows from part 1 of Theorem \ref{convth}.

\begin{proof}[Proof of part 2 of Lemma \ref{ladm}] Since we have already proved that $\alpha+k+1>0$ and $\beta+k_1-k_2+1>0$, we deduce that the constant $c$
(\ref{defc}) is positive. The limit (\ref{lm5}) then shows that the measure $\tau_N$ (\ref{defmt}) is also positive for $N$ big enough. That is, for $x=0,\cdots, N-k_1$,
$$
\frac{\binom{\alpha+k+x}{x}\binom{\beta+N-k_2-x}{N-k_1-x}}{\Omega _\F^{\alpha,\beta,N}(x)\Omega_\F^{\alpha,\beta,N}(x+1)}>0.
$$
Using now part ii of Lemma \ref{l3.1}, we deduce that for $N$ big enough the sign of $\HH_\F^{\alpha,\beta,N}$ (\ref{defadmh}) is equal to the sign of $(\alpha+1)_k(\beta+1)_{k_1-k_2}$. It is now enough to take into account that for $N>\widehat{\alpha+\beta+1}$, the sign of $\HH_\F^{\alpha,\beta,N}$ does not depend on $N$ (\ref{ndN}).
\end{proof}

To prove part 2 of Theorem \ref{convth} we need the following Lemma.

\begin{lemma}\label{lem6.2} With the same hypothesis of Theorem \ref{convth}, assume
in addition that $\Omega_\F ^{\alpha,\beta}(x_0)=0$ for some $x_0\in (-1,1)$, then $\Omega_{\F_{\Downarrow }}^{\alpha +s_\F,\beta +s_\F}(x_0)=0$ as well.
\end{lemma}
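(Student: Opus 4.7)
The plan is to reduce the statement to showing $P_{u_\F}^{\alpha,\beta;\F}(x_0)=0$. Indeed, (\ref{rromh}) gives $P_{u_\F}^{\alpha,\beta;\F}(x)=z_{\alpha,\beta;\F}\,\Omega_{\F_\Downarrow}^{\alpha+s_\F,\beta+s_\F}(x)$, and the standing assumptions (\ref{cpar11}), (\ref{cpar12}) force $z_{\alpha,\beta;\F}\neq 0$, so the stated conclusion is equivalent to $P_{u_\F}^{\alpha,\beta;\F}(x_0)=0$. I would argue by contradiction, supposing $P_{u_\F}^{\alpha,\beta;\F}(x_0)\neq 0$ and writing $m\geq 1$ for the order of the zero of the polynomial $\Omega_\F^{\alpha,\beta}$ at $x_0$.

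First I would revisit the proof of part 1 of Theorem \ref{convth} to extract an \emph{a priori} bound for
$$
H(x)=\frac{(1-x)^{\alpha+k}(1+x)^{\beta+k_1-k_2}(P_{u_\F}^{\alpha,\beta;\F})^2(x)}{(\Omega_\F^{\alpha,\beta}(x))^2},
$$
namely that $\int _I H(x)\,dx \le cd/4^{k_1}$ for \emph{every} compact interval $I\subset (-1,1)$ on which $\Omega_\F^{\alpha,\beta}$ does not vanish, with $c,d$ the positive constants from that proof. The ingredients used there---the uniform limit (\ref{lm4}) secured via Hurwitz's theorem on the subsequence $X$, the Cauchy-sum identification of $2\tau_N(I)/N^{\alpha+\beta+1}$ with a Riemann sum of $(4^{k_1}/c)H$ on $I$, and the trivial domination $\tau_N(I)\le\tau_N(\RR)$ together with the asymptotics (\ref{lmn1}) of the total mass---all go through on any such $I$, not only the one-sided intervals adjacent to $\pm 1$ used in the original proof.

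Next I would pick $\epsilon>0$ so small that $[x_0-\epsilon,x_0+\epsilon]\subset(-1,1)$, that $P_{u_\F}^{\alpha,\beta;\F}$ has no zero on this interval, and that $\Omega_\F^{\alpha,\beta}$ has no zero in $[x_0-\epsilon,x_0+\epsilon]$ apart from $x_0$. Taking $I_\delta=[x_0+\delta,x_0+\epsilon]$ for $0<\delta<\epsilon$, the polynomial $\Omega_\F^{\alpha,\beta}$ does not vanish on $I_\delta$, so the bound above applies. On the other hand the numerator of $H$ is bounded below by a positive constant on $I_\delta$, while $(\Omega_\F^{\alpha,\beta}(x))^2\sim c_0^2(x-x_0)^{2m}$ near $x_0$, whence $H(x)\geq B(x-x_0)^{-2m}$ on $I_\delta$ for some $B>0$. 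Since $m\geq 1$, $\int_{I_\delta}H\,dx\geq B\int_\delta^\epsilon t^{-2m}\,dt\to +\infty$ as $\delta\to 0^+$, contradicting the uniform upper bound.

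The principal delicate point is the first step: verifying that the integral estimate produced in the proof of Theorem \ref{convth} part 1 truly applies to this more flexible family of intervals, rather than just to the two one-sided intervals abutting $\pm 1$ considered there. Once that extension is justified, the divergence argument is immediate and, combined with (\ref{rromh}), yields $\Omega_{\F_\Downarrow}^{\alpha+s_\F,\beta+s_\F}(x_0)=0$.
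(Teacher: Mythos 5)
Your proposal is correct and follows essentially the same route as the paper: the paper's proof likewise takes an interval $[u,v]$ to the right of $x_0$ on which $\Omega_\F^{\alpha,\beta}$ does not vanish, invokes the integral bound $\int_u^v(1-x)^{\alpha+k}(1+x)^{\beta+k_1-k_2}(\Omega_{\F_\Downarrow}^{\alpha+s_\F,\beta+s_\F})^2/(\Omega_\F^{\alpha,\beta})^2\,dx\le cd/4^{k_1}$ obtained by repeating the argument of part 1 of Theorem \ref{convth} (whose steps, as you note, are insensitive to the interval being adjacent to $\pm1$), and derives the contradiction from the divergence of this integral as $u\to x_0^+$ when $\Omega_{\F_\Downarrow}^{\alpha+s_\F,\beta+s_\F}(x_0)\ne0$. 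Your rephrasing through $P_{u_\F}^{\alpha,\beta;\F}$ via (\ref{rromh}) and the explicit order-of-zero estimate are only cosmetic differences.
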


\begin{proof}
We take a real number $v$ with $x_0<v<1$ such that $\Omega _\F^{\alpha,\beta} (x) \not =0$ for $x\in (x_0,v]$. For a real number $u$ with $x_0<u<v$, write $I=[u,v]$. Then $\Omega _\F^{\alpha,\beta} $ does not vanish in $I$. Proceeding as in the
proof of part 1 of Theorem \ref{convth}, we get
$$
\int _u^v\frac{(1-x)^{\alpha +k}(1+x)^{\beta+k_1-k_2}(\Omega_{\F_{\Downarrow }}^{\alpha +s_\F,\beta +s_\F})^2(x)}{(\Omega_\F^{\alpha,\beta} )^2(x)}dx\le \frac{cd}{4^{k_1}}.
$$
If $\Omega_{\F_{\Downarrow }}^{\alpha +s_\F,\beta +s_\F}(x_0)\not =0$, since $\Omega_\F^{\alpha,\beta} (x_0)=0$ we get
$$
\lim_{u\to x_0^+}\int _u^v\frac{(1-x)^{\alpha +k}(1+x)^{\beta+k_1-k_2}(\Omega_{\F_{\Downarrow }}^{\alpha +s_\F,\beta +s_\F})^2(x)}{(\Omega_\F^{\alpha,\beta} )^2(x)}dx=+\infty.
$$
Hence $\Omega_{\F_{\Downarrow }}^{\alpha +s_\F,\beta +s_\F}(x_0)=0$.
\end{proof}

If $F_1=\emptyset$, the converse of Theorem \ref{alv} is true.

\begin{theorem} Given  real numbers $\alpha$ and $\beta$ satisfying (\ref{cpar11})  and (\ref{cpar12}) and a pair $\F=(\emptyset, F_2)$, assume that $\alpha $, $\beta$ and $\F$ are admissible. Then (\ref{superc}) holds.
\end{theorem}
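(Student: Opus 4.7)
The plan is to show that in the special case $F_1=\emptyset$, the additional technical hypothesis $\alpha+\beta+s_\F+1>0$ appearing in part 2 of Theorem~\ref{convth} is automatic, so that the full statement of Theorem~\ref{convth} applies and yields (\ref{superc}).

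First I would verify the two inequalities in (\ref{superc}). Since $\alpha,\beta$ and $\F$ are Jacobi admissible, part 1 of Theorem~\ref{convth} immediately gives $\alpha+k+1>0$ and $\beta+k_1-k_2+1>0$, which is exactly what is needed. In our setting $F_1=\emptyset$, so $k_1=0$ and $k=k_2$; these inequalities become $\alpha+k_2>-1$ and $\beta-k_2>-1$.

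Next I would verify the extra hypothesis. By definition (\ref{defs0f}), $s_\F=s_{F_1}=s_\emptyset$, and from (\ref{defs0}) we have $s_\emptyset=1$, so $s_\F=1$. Therefore the technical hypothesis reads $\alpha+\beta+2>0$. Adding the two inequalities obtained in the previous step, $\alpha+k_2>-1$ and $\beta-k_2>-1$, we obtain $\alpha+\beta>-2$, exactly what is required.

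With both hypotheses of part 2 of Theorem~\ref{convth} in place, we conclude $\Omega_\F^{\alpha,\beta}(x)\neq 0$ for all $x\in[-1,1]$ (non-vanishing at the endpoints $\pm 1$ is in any case guaranteed by Lemma~\ref{v0le} under the standing assumptions (\ref{cpar11}) and (\ref{cpar12})). This establishes the third condition of (\ref{superc}) and completes the proof. There is no substantial obstacle: the theorem is essentially the observation that the condition $\alpha+\beta+s_\F+1>0$, which had to be imposed as a technical assumption in the general case, is built into the definition of admissibility whenever $F_1=\emptyset$.
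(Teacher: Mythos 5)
Your proposal is circular. The arithmetic observation is correct --- for $F_1=\emptyset$ one has $s_\F=1$, and part 1 of Theorem \ref{convth} gives $\alpha+k_2>-1$ and $\beta-k_2>-1$, whence $\alpha+\beta+s_\F+1=\alpha+\beta+2>0$ --- but you cannot invoke part 2 of Theorem \ref{convth} to conclude, because in the paper's logical development this theorem \emph{is} the base case of the induction that proves part 2 of Theorem \ref{convth}. That proof proceeds by complete induction on $\max F_1$, reducing $\F$ to $\F_\Downarrow=((F_1)_\Downarrow,F_2)$ via Lemma \ref{lem6.2} until $F_1$ is exhausted, and it explicitly states that the case $F_1=\emptyset$ ``is just the previous Theorem.'' No independent proof of part 2 of Theorem \ref{convth} is available, so deriving the $F_1=\emptyset$ statement from it proves nothing.

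The genuine content of this theorem, which your proposal omits entirely, is the direct argument for the non-vanishing of $\Omega_\F^{\alpha,\beta}$ on $(-1,1)$ when $F_1=\emptyset$. The paper does this by induction on $k_2$: for $k_2=1$, $\Omega_\F^{\alpha,\beta}=P_f^{\alpha,-\beta}$ and the admissibility condition pins $(\alpha,\beta)$ into parameter ranges where Szeg\H{o}'s Theorem 6.72 guarantees no zeros in $(-1,1)$; for the inductive step, a zero $x_0$ of $\Omega_\F^{\alpha,\beta}$ would propagate (via Lemma \ref{lem6.2}, using $\F_\Downarrow=\F$ and part 3 of Lemma \ref{ladm}) to a zero of $\Omega_\F^{\alpha+j,\beta+j}$ at $x_0$ for all $j\ge 0$, and a rank argument on the matrix built from the functions $(1+x)^{s}P_f^{\tilde\alpha,-\tilde\beta}(x)$ then produces a nonzero polynomial of degree at most $s+\max F_2$ with a zero of multiplicity exceeding its degree at $x_0$, a contradiction. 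Without some version of this (or another independent argument), the claim is not established.
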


\begin{proof}
We only have to prove that $\Omega _\F^{\alpha,\beta} (x) \not =0$, $-1<x<1$.
We prove it by induction on $k_2$. For $k_2=1$, we have that $F_2$ is a singleton $F_2=\{f\}$, and then $\Omega _\F^{\alpha,\beta} (x)=P^{\alpha,-\beta}_f(x)$. Since $k_1=0$, $k_2=1$, we have from the first part of Lemma \ref{lem6.2} that $\alpha>-2$ and $\beta>0$. Hence, using the admissibility condition (\ref{defadmj}), we deduce that either $-1<\alpha$ and $f<\beta$ or $-2<\alpha<-1$ and $f-1<\beta<f$. In both cases, according to Theorem 6.72 in \cite{Sz} the polynomial $P^{\alpha,-\beta}_f(x)$ does not vanish in $(-1,1)$.

Assume now that the theorem holds for $k_2\le s$, and take a finite set of positive integers $F_2$, with $k_2=s+1$ elements.
According to the definition of $s_{F_1}$ (\ref{defs0}) for $F_1=\emptyset,$ we have $s_{F_1}=1$. Hence we also have $s_\F=1$ (see (\ref{defs0f})). If
there exists $-1<x_0<1$ such that $\Omega _\F^{\alpha,\beta} (x_0)=0$, using the previous Lemma, we get that also $\Omega _{\F _{\Downarrow}}^{\alpha +1,\beta+1} (x_0)=0$. Since $F_1=\emptyset$, we have $\F=\F _{\Downarrow}$ (see (\ref{deffd})) and  then $\Omega _{\F }^{\alpha +1,\beta+1} (x_0)=0$.
If $\alpha, \beta$ and $\F$ are admissible with $F_1=\emptyset$, then $\alpha+1, \beta+1$ and $\F$ are also admissible (see part 3 of Lemma \ref{ladm}).
Proceeding as before, we can conclude that $\Omega _\F^{\alpha +j,\beta+j} (x_0)=0$, $j=0,1,2,\ldots $
Consider the pair $\F_{2,\{ s+1\} }$ defined by (\ref{deff2}). Since $F_1=\emptyset$, from the definition of Jacobi admissibility (\ref{defadmj}), we deduce that there exists $h_0\in \NN$ such that for $h\ge h_0$, $\alpha+h,\beta+h$ and $\F_{2,\{ s+1\} }$ are admissible. Write $\tilde \alpha=\alpha+h_0$, $\tilde \beta=\beta+h_0$. We also have $\Omega _\F^{\tilde \alpha +j,\tilde \beta+j} (x_0)=0$, $j=0,1,2,\ldots $

For a positive integer $m\ge s+\max F_2+1> s+1$ consider the $(s+1)\times m$ matrix
$$
M=\left(
  \begin{array}{@{}c@{}cccc@{}c@{}}
   &  &&\hspace{-.9cm}{}_{1\le r\le m} \\
    \dosfilas{ (\tilde\beta-f)_{r-1}(1+x_0)^{s+1-r}P_{f}^{\tilde \alpha+r-1,-\tilde \beta-r+1}(x_0) }{f\in F_2}
  \end{array}
  \right) .
$$
Write $c_i$, $i=1,\ldots, m$, for the columns of $M$ (from left to right).
For $j\ge 0$, consider the $(s+1)\times s$ submatrix $M_j$ of $M$ formed by the consecutive columns $c_{j+i}$, $i=1,\cdots s$, of $M$.
Using (\ref{defhom}), we see that the minor of $M_j$ formed by its first $s$ rows is equal to $(1+x_0)^{s(1-j)}\prod_{f\in F_2\setminus {\{ f^{2\rceil}_{s+1}\}}}(\tilde\beta -f)_j\Omega _{\F_{2,\{ s+1\} }}^{\tilde \alpha +j,\tilde \beta+j}(x_0)$ where the pair $\F_{2,\{ s+1\} }$ is defined by (\ref{deff2}).
Since the set $F_2\setminus {\{ f^{2\rceil}_{s+1}\}}$ has $s$ elements and $\tilde\alpha +j$, $\tilde\beta +j$ and $\F_{2,\{ s+1\} }$ are admissible, the induction hypothesis says that $\Omega _{\F_{2,\{ s+1\} }}^{\tilde\alpha +j,\tilde\beta +j}(x_0)\not =0$, and hence
the columns $c_{j+i}$, $i=1,\cdots s$, of $M$ are linearly independent.
On the other hand, the consecutive columns $c_{j+i}$, $i=1,\cdots s+1$, of $M$ are linearly dependent because its determinant is equal to  $(1+x_0)^{(s+1)(1-j)}\prod_{f\in F_2}(\tilde\beta -f)_j\Omega _{\F }^{\tilde\alpha +j,\tilde\beta +j}(x_0)$
and $\Omega _{\F }^{\tilde\alpha +j,\tilde\beta +j}(x_0)=0$. Using Lemma \ref{rmc}, we conclude that $\rank M=s$.
Write now
\begin{equation}\label{defhomf2v}
\tilde M=\left(
  \begin{array}{@{}c@{}cccc@{}c@{}}
  &  &&\hspace{-.9cm}{}_{1\le r\le m} \\
     \dosfilas{((1+x)^{s}P_{f}^{\tilde\alpha,-\tilde\beta}(x))^{(r-1)}_{\vert x=x_0} }{f\in F_2}
 \end{array}
  \hspace{-.4cm}\right).
\end{equation}
Using (\ref{Lagab}), it is easy to see that $\rank \tilde M=\rank M=s$. Then there exist numbers $e_f$, $f\in F_2$, not all zero such that the  polinomial $p(x)=\sum_{f\in F_2}e_f(1+x)^{s}P_{f}^{\tilde\alpha,-\tilde\beta}(x)$ is non null and has a zero of multiplicity $m$ in $x_0$. But  the polynomial $p$ has degree at most $s+\max F_2$, and since $m\ge s+\max F_2+1>\deg p$, this shows that $p=0$, which it is a contradiction. This proves the theorem.
\end{proof}

We finally prove part 2 of Theorem \ref{convth}

\begin{proof}
Write $s=\max F_1$. We proceed by complete induction on $s$. The case $s=-1$ (i.e., $F_1=\emptyset$) is just the previous Theorem (which it has been proved without using the hypothesis $\alpha+\beta+s_\F+1>0$).

Assume now that $\alpha$, $\beta$ and $\F$ are Jacobi admissible, $\alpha+\beta+s_\F+1>0$ and
\begin{equation}\label{pas3}
\Omega _\F^{\alpha,\beta} (x) \not =0,\quad -1<x<1,
\end{equation}
holds for $\max F_1\le s$.

We now prove that if $\max F_1=s+1$, and $\alpha$, $\beta$ and $\F$ are Jacobi admissible with $\alpha+\beta+s_\F+1>0$
then (\ref{pas3}) also holds.

Consider the pair $\F _{\Downarrow}=\{(F_1)_{\Downarrow},F_2\}$ defined by (\ref{deffd}).
Since $F_1\not =\emptyset$, we have that $\max (F_1)_{\Downarrow}\le s $. The part 4 of Lemma \ref{ladm} says that if
$\alpha +s_\F$, $\beta+s_\F$ and $\F _{\Downarrow}$ are Jacobi admissible as well. If we write $\tilde \alpha=\alpha +s_F$, $\tilde \beta=\beta+s_F$, we also have $\tilde \alpha+\tilde \beta+s_{\F_\Downarrow}+1=(\alpha+\beta+s_\F+1)+s_\F+s_{\F_\Downarrow}>0$.

The induction hypothesis (\ref{pas3}) then says that
$\Omega _{\F _{\Downarrow}}^{\alpha +s_\F, \beta+s_\F} (x) \not =0$ for $-1<x<1$. The second part of Lemma \ref{lem6.2} then gives that also $\Omega _\F^{\alpha,\beta} (x) \not =0$, for $-1<x<1$.
\end{proof}

We need the technical assumption $\alpha+\beta+s_\F+1>0$ in the previous proof because otherwise we can not guarantee the
admissibility of $\alpha +s_\F$, $\beta+s_\F$ and $\F _{\Downarrow}$ from the admissibility of $\alpha $, $\beta$ and $\F$, as the following example shows. Consider $\alpha=-3/2$, $\beta=-9/7$, $F_1=\{2,3,4\}$ and $F_2=\{1,2\}$, for which $s_\F=1$ and
$(F_1)_{\Downarrow}=\{1,2,3\}$. It is then easy to see that $\alpha,\beta$ and $\F=(F_1,F_2)$ are admissible but
$\alpha+s_\F,\beta+s_\F$ and $\F _{\Downarrow}=((F_1) _{\Downarrow},F_2)$ is not (because $\HH^{\alpha+s_\F,\beta+s_\F}_{\F _{\Downarrow}}(0)$ is negative). But although $\alpha$, $\beta$ and $\F$ do not satisfy the assumption $\alpha+\beta+s_\F+1>0$, they are not a counterexample for the converse of Theorem \ref{alv} because it is easy to check that $\Omega_\F^{\alpha,\beta}$ does not vanish in $[-1,1]$.

\section{Recurrence relations}
The gaps in their degrees imply that exceptional Hahn or Jacobi orthogonal polynomials do not satisfy three term recurrence relations as the usual orthogonal polynomials do. However, as happens with exceptional Charlier, Meixner, Hermite or Laguerre polynomials, they also satisfy higher order recurrence relations of the form
\begin{equation}\label{horr}
\Upsilon (x) p_n(x)=\sum_{j=-r}^ra_{n,j}p_{n+j},\quad n\ge n_0,
\end{equation}
where $\Upsilon$ is a polynomial of degree $r$, $(a_{n,j})_{n}$, $j=-r,\cdots ,r$, are sequences of numbers independent of $x$ (called recurrence coefficients), with $a_{n,r}\not =0$, for $n$ big enough and $n_0$ is certain nonnegative integer. We say that this high order recurrence relation has order $2r+1$. As shown in \cite{durr} this is a consequence of the duality of the exceptional discrete and Krall discrete polynomials.

\subsection{Exceptional Hahn polynomials}
Our procedure to construct higher order recurrence relations for the exceptional Hahn polynomials consists in applying duality to the higher order difference operator with respect to which the associated Krall discrete polynomials (see (\ref{defqnme})) are eigenfunctions. Since we want to work with orthogonal polynomials with respect to positive measures we assume that $N$ is a positive integer, $\alpha, \beta $ and $N$ satisfy (\ref{cpar21}) and (\ref{cpar22}) and that $\alpha, \beta $, $N$ and $\F$ are Hahn admissible, although these assumptions are not needed for the implementation of our method to find higher order recurrence relations for the polynomials (\ref{defmex}).

Up to an additive constant, we define the polynomial $\Upsilon _\F^{\alpha,\beta,N}$  of degree $w_F$ (see (\ref{defwf2})) by solving the
first order difference equation
\begin{equation}\label{lch}
\Upsilon _\F^{\alpha,\beta,N}(x)-\Upsilon _\F^{\alpha,\beta,N}(x-1)=\Omega _\G^{-\alpha+\max F_1+\max F_2+2,-\beta+\max F_1-\max F_2,-N-3-\max F_1}(-x),
\end{equation}
with $\G=(I(F_1),I(F_2))$ and $I$ the involution defined by (\ref{dinv}).
In \cite{dudh}, Corollary 5.2, it is proved that the polynomials $q_n^{\F}(\lambda(x+u_\F))$, $n\ge 0$, (see (\ref{defqnme})) are eigenfunctions of a higher order difference operator $D_\F$ (which can be explicitly constructed using \cite{dudh}, Theorem 3.1). The associated eigenvalues are given
by the polynomial $\Upsilon _\F^{\alpha,\beta,N}$, so that $D_\F(q_n^{\F}(\lambda(x)))=\Upsilon _\F^{\alpha,\beta,N}(n)q_n^{\F}(\lambda(x))$.

In \cite{dudh}, Corollary 5.2, it is also proved that $D_\F$ is a difference operator of order $2w_F+1$, which can be written in terms of the shift operators $\Sh_j$, $\Sh_j(f)=f(x+j)$, in the form
\begin{equation}\label{expexp2}
D_\F=\sum_{j=-w_\F}^{w_\F} g_j(x)\Sh _j,
\end{equation}
where $g_j$, $j=-w_F,\cdots, w_F$, are certain rational functions.

The duality (\ref{duaqnrn}) gives then the higher order recurrence relation for the polynomials $h_n^{\alpha,\beta,N;\F}$, $n\sigma_\F$ (the proof is similar to Corollary 2.3 in \cite{durr} and it is omitted).

\begin{corollary}\label{cor1} The exceptional Hahn polynomials  satisfy a $2w_F+1$ order recurrence relation of the form
\begin{equation}\label{horrchex}
\sum_{j=-w_\F}^{w_\F}A_j^{\alpha,\beta,N;\F}(n)h_{n+j}^{\alpha,\beta,N;\F}(x)=\Upsilon _\F^{\alpha,\beta,N} (x)c_nh_{n}^{\alpha,\beta,N;\F}(x),\quad n\ge 0,
\end{equation}
where $c_n$ is a normalization constant and the number $w_\F$ and the polynomial $\Upsilon_\F^{\alpha,\beta,N}$ are defined by (\ref{defwf2}) and (\ref{lch}), respectively. For $j=-w_\F,\cdots , w_\F$, $A_j^{a;\F}(n)$ is a rational function in $n$ which does not depend on $x$ (and whose denominator does not vanish for $n\in \NN$).
\end{corollary}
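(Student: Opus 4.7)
The plan is to use the duality (\ref{duaqnrn}) to transport the higher-order difference equation satisfied by the Krall-type polynomials $q_n^\F$ into a recurrence on the index $n$ for the exceptional Hahn polynomials $h_n^{\alpha,\beta,N;\F}$. This is the same mechanism used in the proof of Corollary 2.3 of \cite{durr}, and it exploits the fact that duality interchanges the roles of ``variable'' and ``degree''.

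More concretely, I would start from the identity
\begin{equation*}
\sum_{j=-w_\F}^{w_\F} g_j(x)\, q_n^\F(\lambda(x+j)) = \Upsilon_\F^{\alpha,\beta,N}(n)\, q_n^\F(\lambda(x)),
\end{equation*}
which is precisely the eigenvalue equation for the higher-order operator (\ref{expexp2}). Setting $x=v-u_\F$ and $n=u$, each evaluation $q_u^\F(\lambda(v+j-u_\F))$ can be rewritten as $\xi_u h_{v+j}^\F(u)/(\kappa\zeta_{v+j})$ by (\ref{duaqnrn}). After cancelling the common factor $\xi_u/\kappa$ and multiplying by $\zeta_v$, this produces
\begin{equation*}
\sum_{j=-w_\F}^{w_\F} g_j(v-u_\F)\,\frac{\zeta_v}{\zeta_{v+j}}\, h_{v+j}^\F(u) = \Upsilon_\F^{\alpha,\beta,N}(u)\, h_v^\F(u),
\end{equation*}
which, as both sides are polynomials in $u$ that agree on $\NN$, is a polynomial identity in $u$. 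Relabeling $v\to n$ and $u\to x$ would yield (\ref{horrchex}) with $A_j^{\alpha,\beta,N;\F}(n) = g_j(n-u_\F)\zeta_n/\zeta_{n+j}$.

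The main obstacle I anticipate is the careful control of the rational coefficients $A_j^{\alpha,\beta,N;\F}(n)$. The factor $\zeta_{n+j}$ can vanish when $n+j\in u_\F+F_1$ or when $n+j>N+u_\F$, so the substitution sketched above is only formal at those values of $n$. To turn the formal identity into the genuine recurrence stated in the corollary, I would multiply the whole relation by a suitable polynomial in $n$ built out of the $\zeta_{n+j}$'s, absorb it into the normalization constant $c_n$ on the right-hand side, and check that the apparent singularities are matched either by zeros of $g_j(n-u_\F)$ or by the vanishing of the corresponding $h_{n+j}^\F$ (recall that $h_m^\F=0$ whenever $m\notin\sigma_\F$, cf.\ the remarks following (\ref{defmexa})). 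After these cancellations the coefficients $A_j^{\alpha,\beta,N;\F}(n)$ become rational functions of $n$ whose denominators do not vanish for $n\in\NN$, exactly as claimed; this denominator-clearing bookkeeping is where the technical work lies, and is handled precisely as in \cite{durr}.
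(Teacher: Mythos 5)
Your proposal is correct and follows exactly the route the paper intends: the paper omits the proof, referring to Corollary 2.3 of \cite{durr}, and that argument is precisely the one you give --- substitute the duality (\ref{duaqnrn}) into the eigenvalue equation $\sum_j g_j(x)q_n^\F(\lambda(x+j))=\Upsilon_\F^{\alpha,\beta,N}(n)q_n^\F(\lambda(x))$ for the higher-order operator (\ref{expexp2}), swap the roles of variable and index, and clear the $\zeta$-factors. Your identification $A_j^{\alpha,\beta,N;\F}(n)=g_j(n-u_\F)\zeta_n/\zeta_{n+j}$ and your handling of the apparent singularities at $n+j\in u_\F+F_1$ (where $h_{n+j}^\F=0$) match the standard bookkeeping of \cite{durr}, so nothing essential is missing.
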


The expression we have found in \cite{dudh} for the higher order difference operator $D_\F$ makes difficult to find explicitly the coefficients of its expansion (\ref{expexp2}) in terms of the shift operators $\Sh_n$, $n\in \ZZ$. These coefficients are needed to find explicit expressions for the recurrence coefficients $(A_j)_{j=-r}^r$ in (\ref{horrchex}). We have not been able to find explicit formulas for them in terms of arbitraries $\alpha$, $\beta$, $N$ and $\F$, but such explicit formulas
can be found for small values of $w_F$. Here it is an example.
Consider $F_1=\emptyset$, $F_2=\{1\}$. It is easy to see that for $N\ge 3$, $\alpha$, $\beta$, $N$ and $\F$ are Hahn admissible if and only if either $-1<\alpha$ and $1<\beta $ or $-2<\alpha<-1$ and $0<\beta<1$.

We have $w_F=2$ and then according to the Corollary \ref{cor1}, the polynomials $(h_n^{\alpha,\beta,N;\F})_n$ satisfy a five term recurrence relation. Proceeding as in \cite{durr} we can explicitly find the coefficients $A_j^{\alpha,\beta,N;\F}$, $j=-2,\cdots , 2$:
\begin{equation}\label{coefm1}
A_j^{\alpha,\beta,N;\F}(n)=\begin{cases}
\frac{3(\beta-\alpha-2)\binom{n}{3}(\alpha+n+1)(\beta+n-2)_2(N-n+2)_2(\alpha+\beta+N+n-1)_2}
{(\alpha+1)(\alpha+n-1)(\alpha+\beta+2n-4)_4}, &\mbox{if $j=-2$},\\
\frac{2(\alpha+\beta)(\alpha+\beta+2N+2)\binom{n}{2}(\alpha+n+1)(N-n+2)(\alpha+\beta+N+n)(\beta+n-2)_2}
{(\alpha+1)(\alpha+\beta+2n-4)(\alpha+\beta+2n-2)_3}, &\mbox{if $j=-1$},\\
\frac{-(\alpha+n-1)(\beta+n-4)A_{-2}^{\alpha,\beta,N;\F}(n)}{(\alpha+n+1)(\beta+n-2)(N-n+2)_2}
+\frac{(\alpha+n)(\beta+n-3)A_{-1}^{\alpha,\beta,N;\F}(n)}{(\alpha+n+1)(\beta+n-2)(N-n+2)}
&\\\qquad+\frac{(\alpha+n+2)(\beta+n-1)(N-n+1)A_{1}^{\alpha,\beta,N;\F}(n)}{(\alpha+n+1)(\beta+n-2)}
&\\\qquad -\frac{(\alpha+n+3)(\beta+n)(N-n)_2 A_{2}^{\alpha,\beta,N;\F}(n)}{(\alpha+n+1)(\beta+n-2)}, &\mbox{if $j=0$},\\
\frac{(\alpha+\beta)(\alpha+\beta+2N+2)n(\beta+n-2)(\alpha+\beta+n)(\alpha+n)_2}
{(\alpha+1)(\alpha+\beta+2n+2)(\alpha+\beta+2n-2)_3}, &\mbox{if $j=1$},\\
\frac{(\beta-\alpha-2)n(\beta+n-2)(\alpha+n)_2(\alpha+\beta+n)_2}
{2(\alpha+1)(\beta+n)(\alpha+\beta+2n-1)_4}, &\mbox{if $j=2$}
\end{cases}
\end{equation}
and $\displaystyle \Upsilon_\F^{\alpha,\beta,N}(x)=\frac{(\beta-\alpha-2)x^2}{2(\alpha+1)}+
\frac{(2\beta\alpha+3\beta-\alpha+2N-2+2N\alpha)x}{2(\alpha+1)}$, $c_n=n$.

\bigskip
When $N$ is a positive integer and $F_1\subset \{0,1,2,\cdots, N\}$, with $\max F_1>N/2$, the order $2w_\F+1$ in the recurrence formula (\ref{horrchex}) can be improved. Indeed,  consider the measure
\begin{equation}\label{mii}
\rho _{\alpha,\beta,N}^{\G}=\prod_{f\in G_1}(\lambda-\lambda(N-f))\prod_{f\in G_2}(\lambda-\lambda(f))\prod_{f\in G_3}(\lambda-\lambda(f-\beta))w _{*,\alpha,\beta,N},
\end{equation}
where $\G=(G_1,G_2,G_3)$ is a trio of finite set of positive integers. Notice that for $G_1=\emptyset$, $G_2=F1$ and $G_3=F_2$, we get the measure (\ref{mraf}). Using appropriate representations of the measure (\ref{mraf}) in the form
(\ref{mii}), it is proved in \cite{dudh} that (under mild conditions on the parameters) the orthogonal polynomials with respect to the measure $\rho _{\alpha,\beta,N}^{\G}$ are eigenfunctions of a higher order difference operator of the form (\ref{expexp2}) with
$$
-s=r=\sum_{f\in F_1;f\le N/2}f+\sum_{f\in F_1;f>N/2}(N-f)-\binom{m_1}{2}-\binom{m_2}{2}-\binom{k_2}{2}.
$$
where $m_1$ and $m_2$ are the number of elements of $G_1=\{N-f:f\in F_1,f>N/2\}$ and $G_2=\{f:f\in F_1,f\le N/2\}$, respectively. Notice that this number is less than or equal to $w_\F$ (\ref{defwf2}) (but equal to $w_\F$ for $N$ big enough).

\subsection{Exceptional Jacobi polynomials}
Since we want to work with orthogonal polynomials with respect to positive measures we assume that $\alpha, \beta $ satisfy (\ref{cpar11}) and (\ref{cpar12}) and that (\ref{superc}) holds (a sufficient condition for this last assumption is given in Theorem \ref{convth}), although these assumptions are not needed for the implementation of our method to find higher order recurrence relations for the polynomials (\ref{deflax}).
Up to an additive constant, we define the polynomial $\Upsilon_{\alpha,\beta;\F}$ of degree $w_\F$ as the solution of the first order differential equation
\begin{equation}\label{mm2}
\Upsilon _{\alpha,\beta;\F}'(x)=\Omega _\G^{-\alpha+\max F_1+\max F_2+2,-\beta+\max F_1-\max F_2}(x),
\end{equation}
where $\G=(I(F_1),I(F_2))$, $I$ is the involution defined by (\ref{dinv}) and $\Omega_\G^{\alpha,\beta} $ is the Wronskian type determinant (\ref{defhom}).

By taking limit in Corollary \ref{cor1} we get

\begin{corollary}\label{cor4} The exceptional Jacobi polynomials satisfy a $2w_\F+1$ order recurrence relation of the form
\begin{equation}\label{horrlax}
\sum_{j=-w_\F}^{w_\F}A_j^{\alpha,\beta;\F}(n)P_{n+j}^{\alpha,\beta;\F}(x)=\Upsilon_{\alpha,\beta; \F} (x) P_{n}^{\alpha,\beta;\F}(x),
\end{equation}
where the number $w_\F$ and the polynomial $\Upsilon_{\alpha,\beta ;\F}$ are defined by (\ref{defwf2}) and (\ref{mm2}), respectively. For $j=-w_\F,\cdots , w_\F$, $A_j^{\alpha,\beta;\F}(n)$ is a rational function in $n$ which does not depend on $x$ and whose denominator does not vanish for $n\in \NN$.
\end{corollary}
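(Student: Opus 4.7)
The plan is to obtain the recurrence (\ref{horrlax}) by passing to the limit $N\to+\infty$ in the recurrence (\ref{horrchex}) of Corollary \ref{cor1}, using exactly the same substitution $x\mapsto(1-y)N/2$ and rescaling by powers of $N$ that produces Theorem \ref{th5.1} and the basic Hahn-to-Jacobi limit (\ref{lim1}). The polynomials $h_{n+j}^{\alpha,\beta,N;\F}$ are already known to converge, after the normalization $N^{-(n+j)}$, to a nonzero multiple of $P_{n+j}^{\alpha,\beta;\F}$, so the Jacobi side of (\ref{horrlax}) will appear automatically; the only real work is to (a) identify a scaling for the coefficients $A_j^{\alpha,\beta,N;\F}(n)$ and the polynomial $\Upsilon_\F^{\alpha,\beta,N}$ under which the limits exist and are nonzero, and (b) check that the limit eigenvalue polynomial is indeed the $\Upsilon_{\alpha,\beta;\F}$ defined by (\ref{mm2}).

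First I would fix $n$ and a compact set $K$ in $\CC$ avoiding the zeros of $\Omega_\F^{\alpha,\beta}$, evaluate (\ref{horrchex}) at $x=(1-y)N/2$ with $y\in K$, and divide by $N^{n+w_\F}$. By (\ref{lim1}), each term on the left contributes $\upsilon_{n+j}^{\alpha;\F}P_{n+j}^{\alpha,\beta;\F}(y)$ times $A_j^{\alpha,\beta,N;\F}(n)/N^{w_\F+j}$. If I can show that these numerical factors converge to finite limits $A_j^{\alpha,\beta;\F}(n)$ which are rational in $n$ and do not all vanish at the extreme indices $j=\pm w_\F$, the recurrence (\ref{horrlax}) will follow. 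Since the right-hand side of (\ref{horrchex}) already has the factored form $\Upsilon_\F^{\alpha,\beta,N}((1-y)N/2)\cdot c_n h_n^{\alpha,\beta,N;\F}((1-y)N/2)$, the ratios $A_j^{\alpha,\beta,N;\F}(n)/N^{w_\F+j}$ are automatically $x$-independent, and their finiteness and nontriviality can be read off, for the recurrence as a whole, from the convergence of the right-hand side (step below) together with convergence of the left.

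The crux is the limit of the eigenvalue polynomial, which I would carry out directly from its defining equation (\ref{lch}). Writing $\Phi_N(y):=\Upsilon_\F^{\alpha,\beta,N}((1-y)N/2)/N^{w_\F}$, equation (\ref{lch}) at $x=(1-y)N/2$ becomes
\begin{equation*}
\tfrac{N}{2}\bigl(\Phi_N(y)-\Phi_N(y+2/N)\bigr)=\tfrac{N}{2\,N^{w_\F}}\,\Omega_\G^{-\alpha+\max F_1+\max F_2+2,\,-\beta+\max F_1-\max F_2,\,-N-3-\max F_1}\!\!\bigl(\tfrac{(y-1)N}{2}\bigr),
\end{equation*}
with $\G=(I(F_1),I(F_2))$. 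The left-hand side tends formally to $-\Phi'(y)$. For the right-hand side, I would apply an analog of (\ref{lim2}) in which the Hahn parameter $N$ has been replaced by $-N-3-\max F_1$ and the argument of $\Omega_\G$ by $-x$; this is the content of the symmetry visible already at the level of the hypergeometric representation (\ref{hpol}), together with (\ref{lim2}) itself. A careful bookkeeping shows that this right-hand side converges, up to an explicit nonzero constant, to $\Omega_\G^{-\alpha+\max F_1+\max F_2+2,\,-\beta+\max F_1-\max F_2}(y)$, and the degrees of $\Upsilon_\F^{\alpha,\beta,N}$ and $\Omega_\G$ match (both give the exponent $w_\F$) so the normalizations are consistent. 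Comparing with (\ref{mm2}) then identifies the limit of $\Phi_N$ as a nonzero scalar multiple of $\Upsilon_{\alpha,\beta;\F}$.

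The main obstacle is exactly this last step: making rigorous the Hahn-to-Jacobi limit for $\Omega_\G^{\tilde\alpha,\tilde\beta,\tilde N}(-x)$ when $\tilde N=-N-3-\max F_1\to-\infty$ and the evaluation point $-x=(y-1)N/2\to-\infty$ at the same rate. Rather than re-running the Casorati-determinant limit of Section~5 under sign reversal, I would reduce to (\ref{blmel}) column by column using the elementary identity $(-N)_j\,h_n^{\tilde\alpha,\tilde\beta,\tilde N}(m)$ visible from (\ref{hpol}) and the symmetry of the ${}_3F_2$ in its two lower parameters under the change $\tilde N\to-N-\text{const}$, $m\to-m-\text{const}$; this converts the problematic limit back into the standard form (\ref{lim2}). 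Once this is in place, passing to the limit in (\ref{horrchex}) yields (\ref{horrlax}), with $A_j^{\alpha,\beta;\F}(n)$ rational in $n$ (as a limit of rationals with controlled polynomial behavior in $n$) and with $A_{\pm w_\F}^{\alpha,\beta;\F}(n)\neq0$ for $n$ large, matching the explicit formulas established for the Hahn case.
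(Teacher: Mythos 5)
Your proposal is correct and takes essentially the same route as the paper, whose entire proof of this corollary is the single phrase ``by taking limit in Corollary \ref{cor1}'': you substitute $x\mapsto (1-y)N/2$ in (\ref{horrchex}), renormalize, invoke (\ref{lim1}) on each term, and pass from the defining difference equation (\ref{lch}) to the differential equation (\ref{mm2}) for the eigenvalue polynomial. One small slip: after dividing the recurrence by $N^{n+w_\F}$, the numerical factor attached to $\upsilon_{n+j}^{\alpha;\F}P_{n+j}^{\alpha,\beta;\F}(y)$ is $A_j^{\alpha,\beta,N;\F}(n)/N^{w_\F-j}$, not $A_j^{\alpha,\beta,N;\F}(n)/N^{w_\F+j}$.
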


\bigskip
Consider $F_1=\emptyset$, $F_2=\{1\}$. Since $\alpha>-2$ and $\beta>0$, and $s_\F=1$, we get that $\Omega_\F^{\alpha,\beta}(x)\not=0$ for $x\in [-1,1]$ is equivalent to the admissibility of $\alpha,\beta$ and $\F$. And it is easy to see that
this happens if and only if either $-1<\alpha$ and $1<\beta $ or $-2<\alpha<-1$ and $0<\beta<1$.
By taking limit in (\ref{coefm1}), we get that the exceptional Hahn polynomials $(P_n^{\alpha,\beta;\F})_n$ satisfy the five order recurrence relation (\ref{horrlax}) where $w_F=2$,
\begin{equation*}\label{coefm2}
A_j^{\alpha,\beta;\F}(n)=\begin{cases}
\frac{(\beta-\alpha-2)(\alpha+n+1)(\alpha+n-2)(\beta+n-2)_2}
{2(\alpha+1)(\alpha+\beta+2n-4)_4}, &\mbox{if $j=-2$},\\
\frac{-2(\alpha+\beta)(\alpha+n-1)(\alpha+n+1)(\beta+n-2)_2}
{(\alpha+1)(\alpha+\beta+2n-4)(\alpha+\beta+2n-2)_3}, &\mbox{if $j=-1$},\\
-\frac{(n-1)(n-2)(\beta+n-4)A_{-2}^{\alpha,\beta;\F}(n)}{(\alpha+n-2)(\alpha+n+1)(\beta+n-2)}
-\frac{(n-1)(\alpha+n)(\beta+n-3)A_{-1}^{\alpha,\beta;\F}(n)}{(\alpha+n-1)(\alpha+n+1)(\beta+n-2)}
&\\\qquad-\frac{(\alpha+n+2)(\alpha+n)(\beta+n-1)A_{1}^{\alpha,\beta;\F}(n)}{n(\alpha+n+1)(\beta+n-2)}
&\\\qquad-\frac{(\alpha+n+3)(\alpha+n)(\beta+n)A_{2}^{\alpha,\beta;\F}(n)}{n(n+1)(\beta+n-2)}, &\mbox{if $j=0$},\\
\frac{-2(\alpha+\beta)n(\alpha+n+1)(\beta+n-2)(\alpha+\beta+n)}
{(\alpha+1)(\alpha+\beta+2n+2)(\alpha+\beta+2n-2)_3}, &\mbox{if $j=1$},\\
\frac{(\beta-\alpha-2)\binom{n+1}{2}(\beta+n-2)(\alpha+\beta+n)_2}
{(\alpha+1)(\beta+n)(\alpha+\beta+2n-1)_4}, &\mbox{if $j=2$}
\end{cases}
\end{equation*}
and $\displaystyle \Upsilon_\F^{\alpha,\beta}(x)=\frac{(1-x)(2+3\alpha+\beta+(\alpha-\beta+2))x}{8(\alpha+1)}$.

\bigskip

\noindent
\textit{Mathematics Subject Classification: 42C05, 33C45, 33E30}

\noindent
\textit{Key words and phrases}: Orthogonal polynomials. Exceptional polynomial. Difference and differential operators.
Hahn polynomials. Jacobi polynomials.

     \end{document}